\newcommand{\R}{{\mathbb R}}
\newcommand{\p}{\partial}
\newcommand{\fr}{\frac}
\newcommand{\la}{\langle}
\newcommand{\ra}{\rangle}
\newcommand{\e}{\epsilon}
\newcommand{\be}{\begin{equation}}
\newcommand{\ba}{\begin{aligned}}
\newcommand{\bee}{\begin{equation*}}
\newcommand{\ee}{\end{equation}}
\newcommand{\ea}{\end{aligned}}
\newcommand{\eee}{\end{equation*}}
\newcommand{\dist}{{\rm dist}\, }
\newcommand{\diam}{\operatorname{diam}}
\theoremstyle{plain}
\newtheorem{theorem}{Theorem}[section]
\newtheorem{lemma}[theorem]{Lemma}
\newtheorem{proposition}[theorem]{Proposition}
\newtheorem{prop}[theorem]{Proposition}
\theoremstyle{remark}
\newtheorem{remark}[theorem]{Remark}
\theoremstyle{definition}
\newtheorem{definition}[theorem]{Definition}
\numberwithin{equation}{section}
\title{Convergence of Curve Shortening Flow to Translating Soliton}
\author{Beomjun Choi}
\address{{\bf Beomjun Choi:} 
Department of Mathematics, POSTECH, 77 Cheongam-Ro, Nam-Gu, Pohang, Gyeongbuk 37673, Republic of Korea.}
\email{bchoi@postech.ac.kr}
\author{Kyeongsu Choi}
\address{{\bf Kyeongsu Choi:} Korea Institute for Advanced Study, 85 Hoegiro, Dongdaemun-gu, Seoul 02455, Republic of Korea.}
\email{choiks@kias.re.kr}
\author{Panagiota Daskalopoulos}
\address{{\bf Panagiota Daskalopoulos:} Department of Mathematics, Columbia University, 2990 Broadway, New York,  NY 10027, USA.}
\email{pdaskalo@math.columbia.edu}
\begin{document}
\title[Uniqueness of ancient GCF ovals]  {Uniqueness of ancient solutions \\ to Gauss curvature flow  asymptotic to \\a  cylinder}

\begin{abstract} We address the classification of ancient solutions to the Gauss curvature flow under the assumption that the solutions are contained 
in  a cylinder of bounded cross-section. For each cylinder of convex bounded cross-section, we show that there are only two ancient solutions which 
are asymptotic to this cylinder: the non-compact translating soliton and the compact oval solution obtained by gluing two translating solitons approaching each other  from time $-\infty$ from  two opposite ends. 

%
%
%We address the convergence of  $\alpha$-Gauss curvature flow,  for $\alpha >1/2$, to a translating soliton when initial non-compact convex hypersurface is contained in a cylinder of bounded cross-section. The limit translating soliton is uniquely determined by the cylinder asymptotic to the initial hypersurface.  As an application this result, we show the classification of ancient solutions to the Gauss curvature flow which are contained in a cylinder. For each asymptotic cylinder, there are only two ancient solutions: the translating soliton and the oval solution obtained by gluing two translating solitons. We show the existence and uniqueness of such ovals.

\end{abstract}
\maketitle
\tableofcontents

\section{Introduction}

A one-parameter family $\Sigma_t:=F(M^n,t)$ of complete convex embedded hypersurfaces defined by $F:M^n\times [0,T) \to \mathbb{R}^{n+1}$  is a solution of the {\em Gauss curvature flow} (GCF in abbreviation) if $F(p,t)$ satisfies
\begin{equation}
\tfrac{\partial}{\partial t}F(p,t)=-K(p,t) \, \nu (p,t),
\end{equation}
where $K(p,t)$ is the Gauss curvature of $\Sigma_t$ at $F(p,t)$, and $\nu(p,t)$ is the unit normal vector of $\Sigma_t$ at $F(p,t)$ pointing  outward of the convex hull of $\Sigma_t$. 

\smallskip

The GCF was first introduced by W. Firey \cite{Fi} in 1974 as a model that  describes  the deformation of a compact convex body $\hat \Sigma_0$ embedded in 
$R^{n+1}$  which is subject to wear under impact from any random angle.  An example can be a stone on a beach impacted by the sea. 
The probability of impact at any point $P$  on the surface $\Sigma_t = \partial \hat \Sigma_t$  is proportional to the Gauss curvature $K$ 
of $\Sigma$ at $P$.  W. Firey showed, assuming that a solution exists,  that the GCF   shrinks smooth, compact, strictly convex and centrally symmetric hypersurfaces embedded in $\R^{3}$ to round points.   The existence of solutions in any dimension was established in 1985  by Tso \cite{Ts}. 
Tso showed that   under the assumption that the  initial surface $ \Sigma_0$  is smooth, compact and strictly convex    the Gauss curvature flow admits a unique solution $\Sigma_t $ which shrinks to a point at the exact time $T^* : = V/4\pi$,    where $V$  is the volume enclosed by the initial surface $\Sigma_0$. Around the same time Chow \cite{Ch85} proved that, under certain restrictions on the second fundamental form of the initial surface, the Gauss curvature flow shrinks smooth compact strictly convex hypersurfaces to round points.
Later,  in \cite{An99} Andrews showed that the Gauss Curvature flow shrinks any compact convex hypersurface  in $\R^3$ to a round point.  For higher dimensions $n\ge3$, P. Guan and L. Ni in \cite{GN} obtained the convergence of the flow after rescaling  to  a self-shrinking soliton.
K. Choi and P. Daskalopoulos \cite{CD} have recently shown  that the sphere is the unique self-shrinking soliton which combined with the result in \cite{GN} shows that the only finite time singularities in the n-dimensional  GCF  are the spheres. 

\smallskip 

In this work we will study the {\em ancient solutions} to the GCF, that is solutions 
which exist for all time $t\in(-\infty,T)$, for some $T\in(\infty,\infty]$.  Shrinking and  translating solitons  are typical  important models  of ancient solutions. A shrinking soliton refers a solution which homothetically shrinks to a point. A  shrinking soliton which shrinks to  at spatial origin at time $t=0$, 
is of the form $\Sigma_t = (-t)^{\frac{1}{1+n}} \Sigma_{-1}$ and $\Sigma_{t}$ satisfies  $K= \frac{1}{(n+1)(-t)}\la F, \nu \ra $. A translating soliton refers 
to a solution which moves by translation $\Sigma_t = \Sigma_0 + \omega t $, along a fixed direction   $\omega\in \mathbb{R}^{n+1}$, 
it is defined for all  $t\in(-\infty,\infty)$, and satisfies $K =  \la -\nu, \omega \ra$. 

%The asymptotic behavior and the classification of solitons when the speed is a power of the Gauss curvature, say $K^\alpha$, have been studied 

\smallskip 
In one dimension, the GCF  coincides with  the {\em Curve Shortening Flow} (CSF in abbreviation)  for curves embedded in $\R^2$. In  this case, there is only one translating soliton (up to isometries and  rescaling). It is called the {\em Grim Reaper solution}  and is given by the  graphical representation
$y=t -\ln \cos x $, for  $(x,t)\in(-\pi/2,\pi/2)\times (-\infty,\infty)$. 

\smallskip 

The result of P. Daskalopoulos, R. Hamilton and N. Sesum \cite{DHS} reveals that  shrinking and translating solitons are major building blocks of ancient solutions. It was shown in \cite{DHS} that  the only  compact convex ancient solutions to  CSF are the  shrinking round sphere or the {\em Angenent oval}.
The latter,   looks as if it is constructed by the gluing of two Grim Reapers coming from opposite ends.  It  is given 
 by the  implicit equation $\cos x =e^t \cosh y $ and, as $t\to -\infty$, it is approximately the  intersection of the {\em two Grim Reapers}  $y=(t -\ln2) -\ln \cos x$ and $y= -(t-\ln2) + \ln \cos x$.  Moreover recently, T. Bourni, M. Langford, and G. Tinaglia \cite{BLT} completed this classification by showing that the shrinking circle, the Angenent oval, the Grim Reaper, and the stationary line are the only convex ancient solutions to the curve shortening flow. 

\smallskip 
A  similar classification holds true  in the two-dimensional   {\em Ricci flow}. It was shown  by P. Daskalopoulos, R. Hamilton, and N. Sesum  in \cite{DHS2},
  that the  shrinking sphere and the King  solution,  are the only ancient solutions defined  on $\mathbb{S}^2$. 
For the {\em mean curvature flow}  and  the {\em Ricci flow}  in higher dimensions, the classifications are done under a  non-collapsing  along with convexity,  low entropy or certain  other conditions. See \cite{BC}, \cite{ADS}, \cite{B}, \cite{DS}. 

\smallskip 

In this paper, we prove the classification of ancient solutions to the Gauss curvature flow under the assumption that the solution is contained in a cylinder of bounded cross-section. The  relevance of this assumption is  found in the classification of  translating solitons to the GCF given by J. Urbas \cite{Ur1,Ur2} where each translator is shown to be a graph on a convex bounded domain $\Omega\subset \mathbb{R}^{n}$. Note also that the Grim Reaper and the Angenent oval are solutions to the curve shortening flow contained in a strip, and the rotationally symmetric steady cigar soliton and the King  solution are ancient solutions of the 2-dim Ricci flow asymptotic to a fixed round cylinder.  However, a significant difference between those previous results and ours in this work, is that a  translator exists for  each $\Omega\subset \mathbb{R}^n$ (see  J. Urbas \cite{Ur1,Ur2}) and hence there are {\em infinitely many ancient solutions}. We will show the {\em uniqueness}  of ancient solutions having {\em asymptotic cylinder}  $\Omega 
 \times \R$, according to the definition below.

\begin{definition}[Asymptotic cylinder of an ancient solution]  \label{def-asymptcy}
Assume that $\Sigma_t$, $t \in (-\infty, T)$, $T \in (-\infty, +\infty]$  is  a complete ancient GCF solution  such that  $\cup_{t \in (-\infty, T) }  \Sigma_t \subset \mathrm{cl}( \Omega )\times \R$, for a some open bounded domain $\Omega \subset  \R^n$,  and that  $\cup_{t \in (-\infty, T) }  \Sigma _t $ is   not contained in any smaller  cylinder  $\mathrm{cl}(\Omega') \times \R$.  We will then refer to the cylinder $\Omega \times \R$ as the {\em asymptotic cylinder} of the ancient solution $\Sigma_t$,  $t \in (-\infty, T)$. 
\end{definition}  Similarly, we define the asymptotic cylinder of a time slice $\Sigma_{t'}$ by the smallest cylinder containing $\Sigma_{t'}$.  
\smallskip

Our first result given below settles the {\em uniqueness}  of {\em non-compact}    ancient solutions.

\begin{theorem}[Uniqueness  of non-compact ancient solutions]\label{thm-noncompact} Given a convex bounded $\Omega\subset \mathbb{R}^n$, the translating soliton asymptotic to $\Omega\times\mathbb{R}$ is the unique non-compact ancient solution asymptotic to $\Omega\times \mathbb{R}$. This uniqueness 
holds up to  translations along  the  $e_{n+1}$ direction and reflection about $\{x_{n+1}=0\}$.
\end{theorem}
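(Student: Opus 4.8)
The plan is to compare the given non-compact ancient solution with the translating soliton, extract a pair of monotone quantities from the avoidance principle, and upgrade the resulting squeeze to rigidity via a backward limit. \emph{Reduction.} A complete convex non-compact hypersurface trapped in the bounded cylinder $\mathrm{cl}(\Omega)\times\R$ must open in the $e_{n+1}$-direction, so after a reflection about $\{x_{n+1}=0\}$ each $\Sigma_t$ is the graph of a strictly convex function $u(\cdot,t)$ that blows up at the boundary of its domain. I would first show — this is the technical heart, see below — that the asymptotic-cylinder hypothesis forces this domain to be $\Omega$ itself and $u(\cdot,t)$ to have the same blow-up rate at $\p\Omega$ as the translator $\mathcal T_t=\{x_{n+1}=\tau(x)+\omega t:x\in\Omega\}$ (with $\tau$ strictly convex, $\tau\to+\infty$ at $\p\Omega$ and $\omega>0$, the translator of J.~Urbas). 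Then
\[
g(t):=\inf_{x\in\Omega}\big(u(x,t)-\tau(x)-\omega t\big),\qquad h(t):=\sup_{x\in\Omega}\big(u(x,t)-\tau(x)-\omega t\big)
\]
are finite, $\Sigma_t$ lies between $\mathcal T_t+g(t)\,e_{n+1}$ and $\mathcal T_t+h(t)\,e_{n+1}$, and $g(t)=h(t)$ exactly when $\Sigma_t$ is a vertical translate of $\mathcal T_t$; so the theorem reduces to proving $g\equiv h$.

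\emph{Monotonicity.} Any vertical translate of the flow $t\mapsto\mathcal T_t$ is again a GCF solution, and at each time $\Sigma_t$ is ordered against $\mathcal T_t+g(t)\,e_{n+1}$ as a graph over $\Omega$ with matching blow-up at $\p\Omega$; the avoidance principle then gives that $g$ is non-decreasing, and symmetrically that $h$ is non-increasing, while $g\le h$ always. Hence it suffices to show $g(-\infty)=h(-\infty)$ (limits as $t\to-\infty$), for then $h(t)\le h(-\infty)=g(-\infty)\le g(t)\le h(t)$ makes $g\equiv h$ constant, i.e.\ $u(\cdot,t)=\tau+\omega t+\mathrm{const}$.

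\emph{Rigidity.} I would then take $t_i\to-\infty$, set $\Sigma^i_t:=\Sigma_{t+t_i}-\omega t_i\,e_{n+1}$ (a GCF solution on $(-\infty,T-t_i)$ squeezed between $\mathcal T_t+g(t_i)\,e_{n+1}$ and $\mathcal T_t+h(t_i)\,e_{n+1}$), and pass to a limit. On compact subsets of $\Omega\times\R$ the graph functions of $\Sigma^i$ have bounded oscillation, hence bounded gradient; combined with the differential Harnack estimate for the Gauss curvature flow — which, for an ancient solution, keeps $K$ bounded away from zero in the interior — and the interior regularity theory, one gets uniform smooth bounds and, along a subsequence, convergence to an \emph{eternal} solution $\Sigma^\infty_t$ trapped between $\mathcal T_t+g(-\infty)\,e_{n+1}$ and $\mathcal T_t+h(-\infty)\,e_{n+1}$. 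For $v(x,t):=u^\infty(x,t)-\tau(x)-\omega t$ one checks $\inf_x v(\cdot,t)=\lim_i g(t+t_i)=g(-\infty)$ for every $t$, with the value attained at an interior point (again by the wall asymptotics). So $v\ge g(-\infty)$ with equality at a finite-time interior point; since $u^\infty$ and $\tau+\omega t$ both solve the graphical GCF equation and are strictly convex in the interior, $v$ solves a linear, locally uniformly parabolic equation with continuous coefficients, and the strong maximum principle forces $v\equiv g(-\infty)$ (first for earlier times, then for all $t$ by forward uniqueness). Thus $\Sigma^\infty_t=\mathcal T_t+g(-\infty)\,e_{n+1}$; running the same argument with $\sup$ in place of $\inf$ gives $\Sigma^\infty_t=\mathcal T_t+h(-\infty)\,e_{n+1}$ as well, so $g(-\infty)=h(-\infty)$, which completes the proof.

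\emph{Main obstacle.} The genuinely delicate point is the preliminary analysis near the wall $\p\Omega\times\R$: that a non-compact ancient solution trapped in $\mathrm{cl}(\Omega)\times\R$ with asymptotic cylinder $\Omega\times\R$ is a graph over all of $\Omega$ with exactly the translator's blow-up profile at $\p\Omega$, so that $g,h$ are finite and their extrema interior. I expect this to require using ancientness together with barriers built from translators over slightly larger and smaller domains. A second, more routine, difficulty is the non-degeneracy of $K$ in the backward limit — needed for both the compactness and the uniform parabolicity of the linearized equation — which should follow from the Harnack estimate for the Gauss curvature flow and the two-sided squeezing.
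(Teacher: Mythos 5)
Your approach is genuinely different from the paper's, but it has an unresolved gap exactly at the point you yourself call the ``technical heart,'' and that gap is real rather than routine. Your scheme needs $g(t)=\inf_\Omega\big(u(\cdot,t)-\tau-\omega t\big)$ and $h(t)=\sup_\Omega\big(u(\cdot,t)-\tau-\omega t\big)$ to be finite with extrema attained at interior points: finiteness is what makes the avoidance argument produce monotonicity, and interior attainment is what lets the strong maximum principle act in the backward limit. Equivalently, you need $u(\cdot,t)-\tau$ to be bounded on all of $\Omega$ \emph{uniformly up to the wall}. But the convergence results available (Theorem~\ref{thm-asymptotic} here, and the main theorem of \cite{CCD2} forward in time) are only $C^\infty_{loc}$ on compact subsets of $\Omega$ and say nothing about the difference near $\p\Omega$, where the two convex functions may a priori blow up at incomparable rates --- or, when $\Omega$ is not strictly convex, may not blow up at all (Remark~\ref{remark-soliton}; cf.\ the flat-sided translators of \cite{CDL}). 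The barriers you suggest from translators on slightly smaller or larger domains do not seem to close this: a translator on $\Omega'\subset\subset\Omega$ blows up on $\p\Omega'$, which lies in the interior of $\Omega$, so it gives no information about $u$ near $\p\Omega$; a translator on $\Omega''\supset\supset\Omega$ is bounded on $\mathrm{cl}(\Omega)$ and yields only a crude lower bound that degenerates relative to $\tau$ as one approaches $\p\Omega$. Without finiteness and interior attainment, both the monotonicity reduction and the rigidity step collapse.

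The paper's proof sidesteps the wall entirely by arguing pointwise. From the Harnack inequality one has $\p_{tt}u\ge 0$ (Lemma~\ref{lem-harnack}), so $t\mapsto u_t(x',t)$ is monotone for each fixed $x'\in\Omega$; Theorem~\ref{thm-asymptotic} gives $u_t(x',t)\to\lambda$ as $t\to-\infty$, and the forward convergence theorem of \cite{CCD2} gives $u_t(x',t)\to\lambda$ as $t\to+\infty$. A monotone function with equal limits at $\pm\infty$ is constant, so $u_t\equiv\lambda$ on $\Omega\times\R$, and the Urbas characterization recalled in Remark~\ref{remark-soliton} then identifies $u(\cdot,t)$ with $u_\Omega+\lambda t+C$. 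This route needs only local convergence and never touches boundary asymptotics. If you could in fact establish the boundary asymptotics, your squeeze-and-rigidity argument would be an interesting alternative and would even bypass the forward convergence theorem of \cite{CCD2}; as written, though, the crucial claim is asserted rather than proved.
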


\smallskip 
Regarding {\em  compact ancient solutions}, our next result shows the  existence of {\em ancient oval solutions}  which  are the  analogue of the Angenent oval solution  for  {\em curve shortening flow. } 

\begin{theorem}[Existence of ancient oval solutions] \label{thm-ancientexistence}  Given a convex bounded $\Omega \subset \R^n$ with $C^{1,1}$ boundary, there exists a compact ancient solution $\Gamma_t \subset \R^{n+1}$ to  Gauss curvature flow which is defined  for all $t \in (-\infty ,T)$,  
it   becomes  extinct at   $T:= -\frac{2V_{\Omega}}{\omega_n}$,  
and has asymptotic cylinder $\Omega \times \R$. Here $V_\Omega$ is the volume under the graph of the translating soliton asymptotic to $\Omega\times \mathbb{R}$ which   is finite according to  Lemma \ref{lem-finitevol}. Furthermore, the solution $\Gamma_t$ satisfies the properties in Propositions   \ref{prop-existence} and  \ref{prop-23}. \end{theorem}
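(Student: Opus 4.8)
The plan is to construct the compact ancient solution $\Gamma_t$ as a limit of a sequence of \emph{shrinking} GCF solutions whose extinction times march off to $-\infty$, normalized so that they are "caught" with a fixed geometric size at time, say, $t=0$. Concretely, fix a sequence $t_i \to -\infty$ and for each $i$ let $\Gamma^i_t$ be the GCF solution starting from a large smooth strictly convex hypersurface that is extinct at time $0$ and whose shape at $t = t_i$ is comparable to (a dilate of) the translating soliton $\Sigma^{\mathrm{tr}}$ over $\Omega$; more robustly, one runs GCF \emph{backward} from a family of convex bodies $K_i$ chosen so that the solution at time $t=0$ has inradius (or some other normalization, e.g. the width in the $e_{n+1}$ direction, or $\vol$) pinned to a fixed value. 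By Tso's theorem each $\Gamma^i_t$ exists smoothly on $(T_i, 0)$ with $T_i \to -\infty$, and by the maximum principle / comparison with the stationary cylinder $\Omega \times \R$ (using that $\Omega$ has $C^{1,1}$ boundary, so $\partial\Omega$ admits interior and exterior ball conditions and hence serves as a barrier) each $\Gamma^i_t$ stays inside $\mathrm{cl}(\Omega)\times\R$. The normalization gives uniform curvature and geometry estimates on compact time intervals — here one invokes the interior estimates of Tso-type (the $K$-bound via the auxiliary function $K/(\langle F-x_0,\nu\rangle - c)$ on the support function), together with Krylov--Safonov / Evans--Krylov and Schauder to upgrade to smooth convergence — so a subsequence converges in $C^\infty_{\mathrm{loc}}$ to a complete ancient GCF solution $\Gamma_t$, $t \in (-\infty, T)$, which is convex, contained in $\mathrm{cl}(\Omega)\times\R$, and compact for each $t$ (compactness survives the limit because the $e_{n+1}$-width is controlled above and below by the normalization and by comparison with the translator).

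Next I would identify the asymptotic cylinder and the extinction time. For the asymptotic cylinder: by construction $\Gamma_t \subset \mathrm{cl}(\Omega)\times\R$; to see it is not contained in any smaller cylinder, one uses that as $t\to-\infty$ the two ends of $\Gamma_t$ (top and bottom in the $e_{n+1}$ direction) must each converge, after vertical translation, to a translating soliton over \emph{some} convex domain $\Omega' \subseteq \Omega$ — this is the content of Proposition \ref{prop-existence}/\ref{prop-23} that the theorem defers to — and the normalization together with a monotonicity/barrier argument forces $\Omega' = \Omega$, since a translator over a strictly smaller domain could not have been approached by solutions pinned to width comparable to that of $\Sigma^{\mathrm{tr}}_\Omega$. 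For the extinction time $T = -2V_\Omega/\omega_n$: this is a conservation-law computation. The enclosed volume $V(t) := \vol(\hat\Gamma_t)$ satisfies $\frac{d}{dt}V(t) = -\int_{\Gamma_t} K \, d\mu = -\omega_n$ (the total Gauss curvature of a closed convex hypersurface is the volume $\omega_n$ of the unit $n$-sphere, by the Gauss--Bonnet/degree argument), so $V(t) = \omega_n(T-t)$ and the solution becomes extinct precisely when $V(t)\to 0$, i.e. at time $T$. It remains to match the constant: as $t \to -\infty$, $\Gamma_t$ looks like two copies of the translator separating at unit vertical speed in opposite directions, so $V(t) \approx 2V_\Omega + \omega_n|t| \cdot(\text{lower order})$ — more precisely $V(t) - \omega_n(-t) \to 2V_\Omega$ as $t\to -\infty$ by the $C^\infty_{\mathrm{loc}}$ convergence of the ends to the translator and a dominated-convergence estimate controlling the volume of the transition region — which together with $V(t) = \omega_n(T-t)$ pins down $T = -2V_\Omega/\omega_n$. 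Finiteness of $V_\Omega$ is Lemma \ref{lem-finitevol}.

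The main obstacle I expect is the \emph{compactness of the limit}: a priori the $C^\infty_{\mathrm{loc}}$ limit of a sequence of compact convex hypersurfaces could be non-compact (an ancient solution escaping to the translator itself, or a piece of it), and one must rule this out. The normalization at $t=0$ controls the size there, but one needs a \emph{uniform two-sided bound on the vertical width of $\Gamma^i_t$} on every compact time interval $[-A, 0]$, uniform in $i$ — the upper bound is immediate from containment in $\Omega\times\R$ only if that were bounded, which it is not, so instead the upper bound must come from comparing with an Angenent-oval-type barrier or from the speed estimate $\partial_t F \cdot e_{n+1} = -K \langle \nu, e_{n+1}\rangle$ integrated in time, and the lower bound must prevent the body from degenerating to lower dimension, which follows from convexity plus the pinned inradius at $t=0$ propagated backward via the evolution of the support function. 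A clean way to organize this is to first establish \emph{a priori} that any such ancient solution, compact or not, is trapped between translates of the translator, then run the construction so that the reflection symmetry about $\{x_{n+1}=0\}$ is built in (choosing the approximating bodies $K_i$ symmetric), which both simplifies the width estimates and gives the symmetry asserted in the companion propositions; the detailed quantitative estimates are then routine parabolic theory, but getting the right normalization so that the limit is genuinely compact \emph{and} has the prescribed asymptotic cylinder is the delicate point.
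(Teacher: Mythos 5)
Your proposal and the paper diverge at the central construction, and the place where they diverge is exactly the step you flag as ``the delicate point'' --- so that step is a genuine gap, not a routine detail. The paper does not extract $\Gamma_t$ by generic compactness from an arbitrary normalized family. Instead, it takes as initial data the explicit glued translator pair
\[
\hat\Gamma_{s,0}:=\{ (x,x_{n+1}) \,:\, u_\Omega(x)+\lambda s \le x_{n+1}\le -u_\Omega(x)-\lambda s \}, \qquad s<0,
\]
runs the weak GCF forward from $\Gamma_{s,0}$, and considers the time-shifted family $\Gamma_{s,t-s}$. This family is \emph{monotone decreasing} in $s$ by a two-step comparison (first comparing $\Gamma_{s_1,t}$ against $\Gamma_{s_1+t,0}$, then against $\Gamma_{s_2,\tau}$), so Lemma \ref{lem-26}(i) produces the limit directly as a weak solution, with $\mathrm{Vol}(\hat\Gamma_t)=-\omega_n t-2V_\Omega$ coming out by construction rather than by an asymptotic matching argument. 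Compactness of the limit, the asymptotic cylinder being exactly $\Omega\times\R$, the reflection symmetry, and the extinction time are then read off from this explicit volume identity and from the inclusion $\hat\Gamma_t\subset\hat\Gamma_{t,0}\subset\Omega\times\R$; none of these requires the inradius/width normalization you propose or the barrier machinery you gesture at, and none of it is ``routine parabolic theory to be supplied later.''

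Beyond the vagueness of the normalization, two concrete problems. First, your argument is pitched entirely in the smooth category (Tso's theorem, Krylov--Safonov, Evans--Krylov, Schauder, ``$C^\infty_{loc}$ convergence''). But for a general $C^{1,1}$ convex $\Omega$ the translator $u_\Omega$ need not be smooth or strictly convex up to $\partial\Omega$ (it can have flat sides, cf.\ \cite{CDL}), and correspondingly $\Gamma_t$ is only known to be smooth in the \emph{interior} of $\Omega\times\R$. This is precisely why the paper sets up the weak-solution framework of Definition \ref{def-weaksol} and why Lemma \ref{lem-26}(i) (limit of a monotone family of weak solutions is a weak solution, for compact limits) is the tool that closes the construction; a compactness argument that relies on uniform Schauder estimates up to the boundary of the cylinder will not go through. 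Second, there is a sign slip in your identification of $T$: the enclosed volume of the glued-translator configuration at large negative $t$ is $\mathrm{Vol}(\Omega\times[-\lambda|t|,\lambda|t|])$ \emph{minus} the two cap deficits, i.e.\ $V(t)=\omega_n|t|-2V_\Omega+o(1)$, so $V(t)-\omega_n|t|\to -2V_\Omega$ (not $+2V_\Omega$). With your sign you would get $T=+2V_\Omega/\omega_n$, inconsistent with extinction at negative time. This is easily corrected, but it signals that the ``dominated-convergence estimate controlling the volume of the transition region'' you defer is exactly where the paper instead hard-codes the answer by its choice of $\hat\Gamma_{s,0}$.
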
 

Our last result shows that $\Gamma_t$ in Theorem \ref{thm-ancientexistence} is unique compact ancient solution asymptotic to $\Omega \times \R$. 
\begin{theorem}\label{thm-compactunique} Given a convex bounded $\Omega\subset \mathbb{R}^n$ with $C^{1,1}$ boundary,
let $\Sigma_t$, $t\in (- \infty, T)$ be a compact ancient solution to the Gauss curvature flow in $\mathbb{R}^{n+1}$ which is asymptotic to $\Omega \times \R$. 
Assuming that the solution becomes extinct  at time $T:=-\frac{2V_{\Omega}}{\omega_n}$,  there is $v\in \R$ such that $\Sigma_t + v e_{n+1} =\Gamma_t$,  for all $t \in  (- \infty, T)$, where $\Gamma_t$ is the solution constructed in Theorem \ref{thm-ancientexistence}. \end{theorem}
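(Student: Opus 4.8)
The plan is to run a uniqueness argument of the type developed for the Angenent oval in \cite{DHS} and for ancient ovals under mean curvature flow in \cite{ADS}, \cite{BC}, adapted to the Gauss curvature flow setting and organized around the translating soliton $M_\Omega$ asymptotic to $\Omega\times\R$. First I would fix the normalization: since $\Sigma_t$ is compact, strictly convex, and becomes extinct exactly at $T=-\frac{2V_\Omega}{\omega_n}$, I would use the volume-extinction formula (the analogue of Tso's $T^\ast=V/4\pi$, here $T^\ast-t=\tfrac{2}{\omega_n}\vol(\hat\Sigma_t)$ up to constants) to pin down the enclosed volume at each time and hence the "width" in the $e_{n+1}$ direction. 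After a translation along $e_{n+1}$ we may assume $\Sigma_t$ and $\Gamma_t$ are centered compatibly, and by the barrier/asymptotics already established for $\Gamma_t$ in Propositions \ref{prop-existence} and \ref{prop-23}, both solutions split, as $t\to-\infty$, into a top sheet and a bottom sheet, each converging after the appropriate space-time translation to the unique translating soliton $M_\Omega$ of Theorem \ref{thm-noncompact} (which is where the assumption that $\Omega$ is convex with $C^{1,1}$ boundary and the uniqueness of the translator enter). This identifies the two solutions to infinite order at $t=-\infty$ along each sheet.

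The core of the proof is a \emph{spectral / energy estimate for the difference} of the two solutions near the asymptotic translator, run on each sheet. Writing both $\Sigma_t$ and $\Gamma_t$ locally as graphs $x_{n+1}=u_\Sigma$, $x_{n+1}=u_\Gamma$ over $\Omega$ in a moving frame that kills the translation, the difference $w:=u_\Sigma-u_\Gamma$ satisfies a linear parabolic equation $\partial_t w = \mathcal L w$ where $\mathcal L$ is the linearization of the graphical GCF operator about $M_\Omega$ — a drift-Laplacian type operator of the schematic form $\alpha K^\alpha b^{ij}\nabla^2_{ij}$ (cf. the macro $\li$) in suitable weighted coordinates, degenerate at $\partial\Omega$ in the way controlled by Urbas's regularity for translators \cite{Ur1,Ur2}. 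I would set up the associated weighted Hilbert space, establish that $\mathcal L$ has discrete spectrum with the zero modes corresponding exactly to the geometric symmetries — vertical translation (and, after modding out the time-translation used to define the extinction time, there is no remaining positive mode) — and then use a Merle–Zaag / ODE-lemma dichotomy to show that the neutral and unstable parts of $w$ cannot dominate as $t\to-\infty$. Combined with the fact that $w\to 0$ to infinite order, this forces $w\equiv 0$ on each sheet for all sufficiently negative $t$.

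Once $\Sigma_t=\Gamma_t+ve_{n+1}$ is established on each sheet for $t\le -t_0$, I would propagate the identity forward in $t$ by uniqueness of the Cauchy problem for GCF: by Tso-type short-time uniqueness for strictly convex compact solutions (and the fact that both are smooth and strictly convex for $t\in(-\infty,T)$), agreement at one time implies agreement for all later times up to the common extinction time $T$. A technical point I would need to handle is matching the top and bottom sheets along the "equator": one must check that the vertical shift $v$ extracted from the two sheets is the same, which follows from the enclosed-volume normalization fixed in the first step together with the symmetry of $\Gamma_t$ about a horizontal hyperplane at $t\to-\infty$.

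The step I expect to be the main obstacle is the spectral analysis of $\mathcal L$ in the second paragraph: the linearized operator degenerates at $\partial\Omega$ (the translator meets the cylinder tangentially), so constructing the right weighted function space, proving a Poincaré-type inequality and the compactness needed for discreteness of the spectrum, and carefully accounting for \emph{every} element of the kernel and any unstable eigenfunctions, is substantially more delicate than in the rotationally symmetric cases treated in \cite{ADS}, \cite{BC}, where one can separate variables. This is also where the $C^{1,1}$ hypothesis on $\partial\Omega$ and Urbas's boundary regularity for translators are essential, and where the bulk of the work in the full proof will lie.
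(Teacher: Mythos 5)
Your proposal takes a genuinely different route from the paper, and in fact the route you outline contains a gap that the paper's argument is designed precisely to avoid. The paper proves uniqueness by a comparison/volume-monotonicity argument, in the spirit of Bourni--Langford--Tinaglia, with no linearization or spectral theory at all. Schematically: (a) build inner barriers $\hat K_{t,\epsilon}$ from translators $u_\epsilon$ on the slightly shrunken domains $(1+\epsilon)^{-1/n}\Omega$ that still have the same speed $\lambda$ (Lemmas \ref{lem-32}--\ref{lem-34}); (b) use the sharp tip-asymptotics $\epsilon_t=o(|t|^{-1})$ (Proposition \ref{prop-35}) to show these barriers fill up $\hat\Gamma_{s,0}$ to within $o(1)$ volume as $s\to-\infty$; (c) run GCF from the barriers and use the exact volume law $\partial_t\text{Vol}(\hat\Sigma_t)=-\omega_n$ (item (iii) of Lemma \ref{lem-26}, valid for any compact weak solution) to get the inclusion $\hat\Gamma_{t-t_1}\subset\hat\Sigma_t$; (d) the same volume law forces $\text{Vol}(\hat\Sigma_t\setminus\hat\Gamma_{t-t_1})$ to be constant in time, which combined with the extinction-time normalization gives $C^+=C^-=:C$ in \eqref{eq-h+h-} and then a second barrier pass yields $\hat\Gamma_t+Ce_{n+1}=\hat\Sigma_t$.

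The gap in your approach is exactly the step you flag as the main obstacle, and it is not a technical refinement that can be deferred: the spectral theory of the linearization of graphical GCF about $u_\Omega$ for a general bounded convex $\Omega$ is not available. Unlike the rotationally symmetric settings of \cite{ADS}, \cite{BC} you cannot separate variables; the linearized operator degenerates at $\partial\Omega$, and by \cite{CDL} the translator may fail to be strictly convex in the interior, so the degeneracy is not only at the boundary. The weighted Poincar\'e inequality, compactness, discreteness of spectrum, and identification of all neutral and unstable modes would each require new analysis whose outcome is not obvious in this generality, so the Merle--Zaag dichotomy cannot be invoked. A second, related issue: your final ``propagate forward by Cauchy uniqueness'' step presumes smooth strictly convex solutions, but because of flat sides the paper must work with weak solutions in the sense of Definition \ref{def-weaksol}; the uniqueness of the initial-value problem you actually need is Theorem \ref{thm-exist1}, not classical short-time uniqueness. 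In short, the paper's elementary barrier/volume argument is not merely an alternative to the spectral program — it is what makes the theorem provable at this level of generality, and you should be skeptical that the spectral route can be closed without substantial and currently unavailable input.
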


\begin{remark} In the proofs of  Theorem \ref{thm-ancientexistence} and Theorem \ref{thm-compactunique}, the only use of the  $C^{1,1}$ assumption on $\Omega$ is to ensure that $V_\Omega$ is finite. In other words, if one can show Lemma \ref{lem-finitevol} without such an assumption, this condition can be removed from the theorems. In two theorems, the extinction time $T=-\frac{2V_\Omega}{ \omega_n}$ is chosen so that the maximum height of $\Gamma_t$, $h(t) := \max_{x\in \Gamma_t}  |x_{n+1}| $, satisfies $h(t)=\lambda|t| + o(1)$ as $t\to-\infty$. Here $\lambda :=\frac{\omega_n}{2|\Omega|}$ is the {\em speed of the translating soliton}  asymptotic  to $\Omega \times \mathbb{R}$. 
\end{remark}

\smallskip

The organization of this paper is as follows:  
In Section \ref{sec-Preliminaries}, we give some preliminary results  and define an appropriate notion of  weak solution. 
This is  needed as  translating solitons defined on non-strictly convex domains are not necessarily smooth and  the corresponding  compact ancient solutions  may not be smooth as well. Theorem \ref{thm-noncompact}-\ref{thm-compactunique} will be shown in later sections with this notion of weak solution. 

In Section \ref{sec-asymp}, we show Theorem \ref{thm-asymptotic}, the asymptotic convergence of an ancient solution to 
a  translating soliton, as $t\to-\infty$, if one translates  the solution so that its  tip is fixed. In  their recent recent work \cite{CCD, CCD2} the authors 
established the  forward in time convergence of non-compact solutions asymptotic to a cylinder  to the corresponding  translating soliton. Theorem \ref{thm-asymptotic}
 is obtained as an application of this  result,   and hence we will refer to the results in  \cite{CCD2} when they are needed. 
Theorem \ref{thm-noncompact}, the uniqueness of non-compact ancient solutions, will be shown in Section \ref{sec-uniquenoncpt} as a consequence of Theorem \ref{thm-asymptotic} and \cite{CCD2}. 

In Section \ref{sec-existence}, we show Theorem \ref{thm-ancientexistence}, the existence of a compact ancient solution  asymptotic to a given cylinder, by showing Propositions   \ref{prop-existence} and \ref{prop-23}. These two propositions also establish  additional properties of the constructed solution $\Gamma_t$ which will be used when we show the uniqueness Theorem \ref{thm-compactunique} by comparing $\Gamma_t$ with an arbitrary ancient solution  $\Sigma_t$.

In Section \ref{sec-noncompact}, we show Theorem \ref{thm-compactunique}, the uniqueness of a compact ancient solution asymptotic to a given cylinder.  Part of   our  proof is inspired by the recent significant  works of  Bourni-Langford-Tinaglia \cite{BLT0,BLT} where they use the rate change in the enclosed the volume as a function of  time to estimate the location of the tips.

\section{Preliminaries}\label{sec-Preliminaries}
In this section we collect some preliminary results. Throughout this paper,  $h_{ij}$ denotes  the second fundamental form.  For a strictly convex solution, one may  consider the  inverse $b^{ij}$
of the second fundamental form $h_{ij}$,  which satisfies   $b^{ik}h_{kj} =\delta^i_j$. Let us recall the unique existence of translating solitons by J. Urbas and denote them as follows.
\begin{definition} [Theorem of J.Urbas \cite{Ur1, Ur2}] \label{def-u_Omega} Given a convex bounded domain $\Omega\subset \mathbb{R}^n$, we define $u_{\Omega}: \Omega \to \mathbb{R}$ by the graph function of the unique translating soliton which is asymptotic to $\Omega \times \mathbb{R}$, moves in the positive $e_{n+1}$ direction, and satisfies  $\inf u_\Omega(\cdot)=0$. In other words, the hypersurface given  by $\p \{ (x,x_{n+1})\in\mathbb{R}^{n+1}\,:\, x_{n+1} > u(x)\}$ defines  the translating soliton. The existence and the uniqueness is shown in \cite{Ur1, Ur2}. 
\end{definition}

\begin{remark}\label{remark-soliton} 
In the case where $\Omega$ is not a strictly convex domain, it  is possible that   $\limsup_{x\to x_0} u_\Omega(x)<\infty$,  for some $x_0\in \p\Omega$,
hence  the hypersurface    $\{x_{n+1}=u_\Omega(x)\}$ may  not be always complete. This is the reason why we denote the translating soliton by  $\p \{ x_{n+1} > u(x)\}$.  Urbas \cite{Ur2} showed the  existence of such solitons and their uniqueness among solutions realized in certain  generalized sense.  To be more specific,  Urbas \cite{Ur2} showed  if a convex function $u(x)$ defined  on $\Omega$ satisfies the translating soliton equation \begin{equation}
\det D^2u= \beta \, (1+|Du |^2)^{\fr{n+1}{2}}
\end{equation}
for some $\beta>0$ in the sense of Alexandrov,  and $|\mathbb{R}^n-  Du(\Omega)|=0$, then $u=u_\Omega +C$, for some  constant $C$. {\em We will use this characterization  of soliton in the proofs  of  Theorem \ref{thm-asymptotic}. }
\end{remark}

\begin{definition} \label{def-lambda}For  given convex bounded domain $\Omega \subset \mathbb{R}^n$, let us note the speed of associated translating soliton by \be\label{eq-lambda}\lambda :=  \fr{1}{|\Omega|}\left[\int_{\mathbb{R}^n} \fr{1}{(\sqrt{1+|p|^2})^{n+1}}dp \right]= \frac{\omega_n}{2|\Omega|}\ee where $\omega_n=|\mathbb{S}^n|$. One can find a derivation of this $\lambda$ in \cite{Ur2} or the equality case of  \eqref{eq-speedlambda'}.
\end{definition}

In \cite{CDL} it was shown that a translating soliton may  be weakly convex with  flat sides in which case it fails to be smooth at the boundary $\partial \Omega \times \R$
of its asymptotic cylinder. This requires a suitable notion of weak solutions. In Definition \ref{def-weaksol}, we define a weak solution in such a way that it satisfies global comparisons with smooth classical solutions. The  existence and the uniqueness of ancient solutions will then be shown in this  class of weak solutions. Here and the remaining sections, an ancient solution is assumed to be a {\em weak  solution to the GCF} in the sense of Definition \ref{def-weaksol} unless otherwise stated.  Also, throughout the paper,  we  will use $\hat \Sigma$ to denote the closed region which is bounded by $\Sigma$.

\begin{definition}[Definition 2.6 in \cite{CCD2}] \label{def-weaksol}
Suppose that $\hat \Sigma_t \subset \mathbb{R}^{n+1}$ for $t \in [T_0,T_1]$ is a one-parameter family of closed convex sets with non-empty interior.  $\Sigma_t =\p \hat \Sigma_t \subset \mathbb{R}^{n+1}$  is a {\em weak subsolution}  to the GCF if the following holds: for given a smooth strictly convex solution to the GCF $\Sigma'_t=\partial \hat \Sigma'_t$ defined for $t \in [a,b]\subset [T_0,T_1]$ with initial data $\hat \Sigma'_a \subset \hat \Sigma_a$,   $\hat \Sigma'_t \subset   \hat \Sigma_{t}$ holds for all $t\in[a,b]$. We define a {\em weak supersolution}in a similar way with the opposite inclusion.  $\Sigma_t=\p \hat \Sigma_t$ is a {\em weak solution}  if it is both a weak sub- and super-solution. $\Sigma_t= \hat \Sigma_t$ for $t\in(-\infty,T)$ is a {\em weak ancient solution} if $\Sigma_t$, $t\in[a,b]$, is a weak solution for all  $-\infty <a\le b <T$.

\end{definition}

The following result shows the existence and uniqueness of a weak solution starting at any convex hypersurface $\Sigma_0= \p \hat \Sigma_0 \subset \R^{n+1}$ which is compact or non-compact and asymptotic to a cylinder.

\begin{theorem} [Theorem 2.7 in \cite{CCD2}] \label{thm-exist1} Let $\hat \Sigma_0 \subset \R^{n+1}$ be a  convex set with non-empty interior. If  $\Sigma_0=\p\hat \Sigma_0$ is compact then there is a unique weak solution $\Sigma_t$ to the GCF running from  $\Sigma_0$ and defined over $t \in [0,T)$ for some $T <+\infty$. If $\Sigma_0=\p \hat \Sigma_0$ is non compact and asymptotic to a cylinder $\Omega\times \mathbb{R}$, then there is a unique weak solution $\Sigma_t$  to the GCF running from $\Sigma_0$ defined for all $t\in [0,+\infty)$. Moreover, each time slice  $\Sigma_t$ is non-compact and asymptotic to $\Omega\times \mathbb{R}$  for all $t\in[0,\infty)$. \end{theorem}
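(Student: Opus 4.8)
The plan is to construct the solution by \emph{smooth convex approximation} together with the avoidance principle, and to reduce both existence and uniqueness to the single assertion that the inner and outer approximating flows coalesce in the limit. Assuming first that $\Sigma_0=\p\hat\Sigma_0$ is compact, I would pick smooth strictly convex bodies $\hat\Sigma_0^{k,-}\nearrow\hat\Sigma_0$ and $\hat\Sigma_0^{k,+}\searrow\hat\Sigma_0$ in Hausdorff distance. By Tso's theorem each runs to a unique smooth strictly convex GCF solution $\hat\Sigma_t^{k,\pm}$ shrinking to a point, and the avoidance principle for compact smooth solutions makes these families monotone in $k$ and traps every smooth solution issuing from inside $\hat\Sigma_0$ inside each $\hat\Sigma_t^{k,+}$. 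Hence the monotone Hausdorff limits $\hat\Sigma_t^{-}:=\overline{\bigcup_k\hat\Sigma_t^{k,-}}$ and $\hat\Sigma_t^{+}:=\bigcap_k\hat\Sigma_t^{k,+}$ exist, and directly from Definition~\ref{def-weaksol} \emph{every} weak solution $\hat\Sigma_t$ from $\Sigma_0$ is sandwiched, $\hat\Sigma_t^{-}\subset\hat\Sigma_t\subset\hat\Sigma_t^{+}$; so it suffices to prove $\hat\Sigma_t^{-}=\hat\Sigma_t^{+}$. Finiteness of the lifespan is automatic: the total Gauss curvature of a closed convex hypersurface is $\omega_n$, so $\tfrac{d}{dt}\vol(\hat\Sigma_t^{k,\pm})=-\omega_n$ and each smooth flow becomes extinct at $\vol(\hat\Sigma_0^{k,\pm})/\omega_n$, which converges to $T:=\vol(\hat\Sigma_0)/\omega_n<\infty$.

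\textbf{The main obstacle} is the coalescence $\hat\Sigma_t^{-}=\hat\Sigma_t^{+}$, which is exactly continuous dependence on the initial datum. I would derive it from the interior a priori estimates of Tso, Chow and Andrews: on any $[0,T-\delta]$ one has two-sided bounds on $K$ (hence on the speed of the flow), with constants depending only on $\delta$ and on fixed lower and upper bounds for the enclosed volume; these turn an $\varepsilon$-Hausdorff closeness $\hat\Sigma_0^{k,+}\subset\hat\Sigma_0^{k,-}+\varepsilon B$ into $\hat\Sigma_t^{k,+}\subset\hat\Sigma_t^{k,-}+\omega(\varepsilon)B$ with $\omega(\varepsilon)\to0$. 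Letting $\varepsilon\to0$ identifies $\hat\Sigma_t^{-}=\hat\Sigma_t^{+}=:\hat\Sigma_t$ as the unique weak solution, and the same modulus gives $\hat\Sigma_t\to\hat\Sigma_0$ as $t\to0^{+}$, so it does run from $\Sigma_0$.

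\textbf{The non-compact case.} Here $\Sigma_0$ is non-compact and asymptotic to $\Omega\times\R$. Since $\hat\Sigma_0\subset\overline{\Omega}\times\R$ has bounded cross-section, its recession cone lies in $\{0\}\times\R$, and being nonzero it contains $e_{n+1}$ up to a reflection (with the single exception $\hat\Sigma_0=\overline{\Omega}\times\R$, the stationary cylinder); in particular $\vol(\hat\Sigma_0)=+\infty$. I would exhaust $\hat\Sigma_0$ by smooth strictly convex \emph{compact} bodies $\hat\Sigma_0^{k}\nearrow\hat\Sigma_0$, whose smooth flows are nested by avoidance and have extinction times $T^{k}=\vol(\hat\Sigma_0^{k})/\omega_n\to\infty$. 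Then $\hat\Sigma_t:=\overline{\bigcup_k\hat\Sigma_t^{k}}$ is defined for all $t\in[0,\infty)$ and, as before, is the minimal weak solution from $\Sigma_0$; a decreasing exhaustion by slightly larger non-compact convex bodies, flowed the same way, produces the maximal one, and the same continuous-dependence argument --- now with interior estimates for the degenerate parabolic Monge--Amp\`ere equation satisfied by the graph --- forces the two to coincide, giving existence and uniqueness for all $t\ge0$.

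\textbf{Preservation of the asymptotic cylinder.} The cylinder $\p\Omega\times\R$ has $K\equiv0$, so by the avoidance principle it is an outer barrier and $\hat\Sigma_t\subset\overline{\Omega}\times\R$ for all $t$. For the matching lower bound, I would fix $x_0\in\Omega$ and $t_0>0$ and use that $\hat\Sigma_0$, being convex with non-empty interior and shadow of closure $\overline{\Omega}$, contains an arbitrarily long solid capsule of some fixed radius $r$ around a vertical segment through $x_0$, hence inside some $\hat\Sigma_0^{k}$. The cylindrical part of the capsule has $K\equiv0$ and stays put, while $\tfrac{d}{dt}\vol=-\omega_n$ along the smooth flow caps the length its two ends can consume by time $t_0$ at $\lesssim\omega_n t_0/(c_n r^n)$; so for the capsule long enough a central vertical slab through $x_0$ persists in $\hat\Sigma_{t_0}^{k}\subset\hat\Sigma_{t_0}$, of height tending to infinity with the capsule length. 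Hence each $\Sigma_t$ is non-compact with shadow of closure $\overline{\Omega}$, i.e.\ asymptotic to $\Omega\times\R$, which completes the proof. The genuinely hard point throughout is the continuous dependence estimate; the rest is monotone convergence of convex bodies plus the avoidance principle.
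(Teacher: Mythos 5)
Your overall architecture --- inner smooth strictly convex approximations, the avoidance principle, and the observation that Definition \ref{def-weaksol} automatically sandwiches every weak solution between the inner and outer limits --- is essentially the route the paper takes (its own proof is only a sketch deferring to \cite{CCD2}), and the reduction of both existence and uniqueness to the coalescence $\hat\Sigma^-_t=\hat\Sigma^+_t$ is sound. The genuine gap is in how you propose to prove coalescence. In the compact case, deriving a Hausdorff modulus of continuity from two-sided interior bounds on $K$ is not justified and is the wrong tool: for initial data that are merely convex the flow can have persistent flat sides (cf.\ \cite{CDL,DH}), the curvature degenerates there and blows up near extinction, and you give no mechanism converting a speed bound into the claimed $\omega(\varepsilon)$. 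You already hold the correct argument and use it only for the lifespan: since $\tfrac{d}{dt}\vol(\hat\Sigma^{k,\pm}_t)=-\omega_n$, the difference $\vol(\hat\Sigma^{k,+}_t)-\vol(\hat\Sigma^{k,-}_t)$ is constant in $t$ and tends to $0$ as $k\to\infty$; two nested closed convex sets of equal volume, the inner one having non-empty interior (which holds for $t<T=\vol(\hat\Sigma_0)/\omega_n$), must coincide. This is exactly the mechanism the authors exploit in Lemma \ref{lem-26}(iii) and in Section \ref{sec-noncompact}, and it replaces your hardest analytic step by two lines.

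In the non-compact case the gap is more serious, because the volume identity is unavailable and your outer family is circular as described: ``slightly larger non-compact convex bodies, flowed the same way'' presupposes the existence statement under proof, and once those are in turn approximated by compact bodies they are asymptotic to strictly larger cylinders $\Omega'\supset\Omega$, so identifying $\hat\Sigma^+_t$ with $\hat\Sigma^-_t$ requires quantitative control of the flow's dependence on the asymptotic cross-section. That control (graphical speed bounds via the Harnack inequality and the resulting uniform regularity, Propositions 4.3--4.5 of \cite{CCD2}) is precisely the content of the cited paper; invoking ``interior estimates for the degenerate Monge--Amp\`ere equation'' does not close it. Two smaller points: in your capsule argument, ``the cylindrical part stays put'' is not a statement about the flow of the capsule, and the volume identity alone does not localize where volume is lost --- you should add that the capsule's rotational and reflection symmetries are preserved, so its evolution is a convex body of revolution whose axial extent is bounded below by volume divided by cross-sectional area; with that insertion the preservation of the asymptotic cylinder goes through. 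Also note that after coalescence you must still verify that the common limit is itself a weak sub- and super-solution at intermediate times $a>0$, which follows by comparing against $\hat\Sigma^{k,\pm}_a$, but deserves a sentence.
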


  \begin{proof} Let us only give  an outline of the proof and refer the reader to \cite{CCD2} for details.   Let $\hat \Sigma_{i,0}$ be a sequence of convex sets with smooth strictly convex boundaries which strictly increases to $\hat\Sigma_0$.  We denote by  $\Sigma_{i,t}$   the unique GCF  solution with initial data  $\Sigma_{i,0}=\p \hat \Sigma_{i,0}$ and we simply define $\Sigma_t$ to be the  limit of $\Sigma_{i,t}$.  Since each  $\Sigma_{i,t}$ is smooth it can be compared 
  with any  smooth solution or any  weak solution.  The existence and  the uniqueness of a weak solution, as stated in the theorem,  follow from this comparison principle. 
  \end{proof}
  
  \begin{lemma}\label{lem-26} The following hold:  \begin{enumerate}[{\em (i)}] 
  
\item The limit of any monotone sequence of weak solutions is a weak solution,  provided that the limit is compact.
    \item The comparison principle  between weak solutions holds. 
  \item  $\p_t \text{Vol}(\hat \Sigma_t) = -|\mathbb{S}^n|=- \omega_n$,   for any compact weak solution $\Sigma_t$. 
  \end{enumerate}
  
  \begin{proof} Let us begin by showing (i). 
Suppose $\p \hat \Sigma_{i,t}$ is an increasing sequence of weak solutions and $\hat \Sigma_t=\mathrm{cl}(\cup_i \hat \Sigma_{i,t})$. Then $
\Sigma_t:=\p \hat \Sigma_t$ is  a supersolution. 
%: if $\hat \Sigma_a \subset \hat \Sigma_a'$ for some smooth solution $\Sigma'_t=\p \hat \Sigma'_t$, for $a\le t\le b$, then $\hat \Sigma_{i,t}\subset \hat \Sigma'_t$ for $a\le t\le b$ since $\Sigma_{i,t}$ are weak solutions and this passes to $\hat \Sigma_t \subset \hat \Sigma'_t$.  
To show that $\Sigma_t$ is a subsolution, suppose that  $\hat \Sigma'_a \subset  \hat \Sigma_a$ and assume, without loss of generality, that $0\in \mathrm{int}(\hat \Sigma'_a)$.  For $\lambda<1$, $\lambda \Sigma'_{a+ \lambda^{-(n+1)}({t-a})} $ is a smaller solution and thus $\lambda \hat \Sigma'_{a}\subset \hat \Sigma_{i,a} $ for large $i>i_\lambda$ (here we use the compactness of $\Sigma_t$). This shows that $\lambda \hat \Sigma'_{a+\lambda^{-(n+1)}({ t-a})}\subset \hat \Sigma_{t}$. By letting $\lambda \uparrow1$ we obtain $\hat \Sigma'_t \subset \hat \Sigma_t$.  
A similar  argument shows the same  for decreasing sequences. (ii) and (iii)  follow  by approximation with smooth solutions as  discussed in the proof of Theorem \ref{thm-exist1}. 
    \end{proof}
  \end{lemma}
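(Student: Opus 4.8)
The plan is to deduce all three statements from one structural fact underlying Theorem~\ref{thm-exist1} and its proof: every weak solution---compact, or non-compact and asymptotic to a cylinder---coincides with the increasing limit of the smooth strictly convex classical GCF solutions $\Sigma_{i,t}$ issuing from an exhaustion $\hat\Sigma_{i,0}\nearrow\hat\Sigma_0$ by smooth strictly convex bodies, so that in particular $\hat\Sigma_t=\mathrm{cl}(\bigcup_i\hat\Sigma_{i,t})$. The remaining ingredients are: the classical avoidance principle for smooth strictly convex GCF solutions (so that the approximants $\hat\Sigma_{i,t}$ stay nested in $i$); the parabolic scaling invariance $\Sigma'_t\mapsto\lambda\,\Sigma'_{t_0+\lambda^{-(n+1)}(t-t_0)}$ of the flow; and the elementary convex-geometry fact that for an increasing sequence of closed convex sets $C_i$ with $\hat C:=\mathrm{cl}(\bigcup_iC_i)$ one has $\mathrm{int}(\hat C)\subset\bigcup_iC_i$, so that every \emph{compact} subset of $\mathrm{int}(\hat C)$ already lies in a single $C_i$.

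For (i), take an increasing sequence of weak solutions $\Sigma_{i,t}=\p\hat\Sigma_{i,t}$ with compact limit $\hat\Sigma_t=\mathrm{cl}(\bigcup_i\hat\Sigma_{i,t})$. That $\Sigma_t$ is a weak super-solution is immediate: if $\Sigma'_t$ is smooth strictly convex on $[a,b]$ with $\hat\Sigma_a\subset\hat\Sigma'_a$, then $\hat\Sigma_{i,a}\subset\hat\Sigma'_a$ for each $i$, the super-solution property of $\Sigma_{i,t}$ gives $\hat\Sigma_{i,t}\subset\hat\Sigma'_t$, and taking the union over $i$ (with $\hat\Sigma'_t$ closed) gives $\hat\Sigma_t\subset\hat\Sigma'_t$. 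The sub-solution property is the one delicate point, and this is where I expect the main obstacle. Given a smooth strictly convex $\Sigma'_t$ on $[a,b]$ with $\hat\Sigma'_a\subset\hat\Sigma_a$, one \emph{cannot} in general conclude that $\hat\Sigma'_a\subset\hat\Sigma_{i,a}$ for some $i$, since $\hat\Sigma'_a$ may touch $\p\hat\Sigma_a$; so, after normalizing $0\in\mathrm{int}(\hat\Sigma'_a)$, I pass to the rescaled family $t\mapsto\lambda\,\hat\Sigma'_{a+\lambda^{-(n+1)}(t-a)}$ with $\lambda\in(0,1)$, which is again a smooth strictly convex GCF solution---now on the shorter interval $[a,\,a+\lambda^{n+1}(b-a)]$---and whose initial body $\lambda\hat\Sigma'_a$ is a compact subset of $\mathrm{int}(\hat\Sigma'_a)\subseteq\mathrm{int}(\hat\Sigma_a)$, hence contained in $\hat\Sigma_{i,a}$ for all $i\ge i_\lambda$. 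Applying the sub-solution property of $\Sigma_{i,t}$ to this barrier gives $\lambda\,\hat\Sigma'_{a+\lambda^{-(n+1)}(t-a)}\subset\hat\Sigma_{i,t}\subset\hat\Sigma_t$ on its interval of definition, and letting $\lambda\uparrow1$---so that the interval exhausts $[a,b)$ and $\lambda\hat\Sigma'_{a+\lambda^{-(n+1)}(t-a)}\to\hat\Sigma'_t$---yields $\hat\Sigma'_t\subset\hat\Sigma_t$ for $t<b$, hence for all $t\in[a,b]$ by closedness. A decreasing sequence is dual: then $\hat\Sigma_t=\bigcap_i\hat\Sigma_{i,t}$ is automatically closed, the sub-solution property follows by intersecting inclusions, and the super-solution property follows from the same rescaling with an outward dilation $\mu>1$, using that a decreasing sequence of compact sets with empty intersection has an empty member.

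For (ii), let $\Sigma_t$ and $\Sigma^*_t$ be weak solutions on $[a,b]$ with $\hat\Sigma_a\subset\hat\Sigma^*_a$. By Theorem~\ref{thm-exist1}, $\hat\Sigma_t=\mathrm{cl}(\bigcup_i\hat\Sigma_{i,t})$ for smooth strictly convex GCF solutions $\Sigma_{i,t}$ with $\hat\Sigma_{i,a}\nearrow\hat\Sigma_a$. Since $\hat\Sigma_{i,a}\subset\hat\Sigma_a\subset\hat\Sigma^*_a$ and $\Sigma^*_t$ is in particular a weak sub-solution, the definition applied with the smooth barrier $\Sigma_{i,t}$ gives $\hat\Sigma_{i,t}\subset\hat\Sigma^*_t$ for all $t\in[a,b]$; letting $i\to\infty$ and using that $\hat\Sigma^*_t$ is closed gives $\hat\Sigma_t\subset\hat\Sigma^*_t$.

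For (iii), let $\Sigma_t$ be a compact weak solution, so by Theorem~\ref{thm-exist1} it is the increasing limit of smooth strictly convex $\Sigma_{i,t}$ with $\hat\Sigma_{i,0}\nearrow\hat\Sigma_0$, with the $\hat\Sigma_{i,t}$ nested in $i$ and $\hat\Sigma_t=\mathrm{cl}(\bigcup_i\hat\Sigma_{i,t})$. For each smooth slice, the first variation of enclosed volume gives $\p_t\vol(\hat\Sigma_{i,t})=\int_{\Sigma_{i,t}}\langle\p_t F,\nu\rangle\,d\mu=-\int_{\Sigma_{i,t}}K\,d\mu$, and since for a closed strictly convex hypersurface the Gauss map is a diffeomorphism onto $\mathbb{S}^n$ with $K\,d\mu=\nu^*(d\mathrm{vol}_{\mathbb{S}^n})$, we get $\int_{\Sigma_{i,t}}K\,d\mu=|\mathbb{S}^n|=\omega_n$; hence $\vol(\hat\Sigma_{i,t})=\vol(\hat\Sigma_{i,0})-\omega_n t$ for every $t$ before the extinction time of $\Sigma_{i,t}$. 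Letting $i\to\infty$ and using continuity of Lebesgue measure along $\hat\Sigma_{i,t}\nearrow\hat\Sigma_t$ (the boundaries of convex bodies being null, and the extinction times $\vol(\hat\Sigma_{i,0})/\omega_n$ increasing to that of $\Sigma_t$) gives $\vol(\hat\Sigma_t)=\vol(\hat\Sigma_0)-\omega_n t$ on the whole interval of existence, and differentiating gives $\p_t\vol(\hat\Sigma_t)=-\omega_n$. As indicated, the genuinely delicate step is the sub-solution direction of (i)---passing from ``$\hat\Sigma'_a$ lies in the closure of $\bigcup_i\hat\Sigma_{i,a}$'' to ``$\hat\Sigma'_a$ lies in a single $\hat\Sigma_{i,a}$'', where the parabolic rescaling, the interior-in-the-union fact, and the compactness hypothesis all enter; the rest is bookkeeping on top of Theorem~\ref{thm-exist1} and the classical smooth comparison principle.
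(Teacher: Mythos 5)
Your proposal follows essentially the same route as the paper: the super-solution direction and parts (ii)--(iii) by taking limits of the smooth strictly convex approximants from Theorem~\ref{thm-exist1}, and the delicate sub-solution direction of (i) by the parabolic rescaling $\lambda\hat\Sigma'_{a+\lambda^{-(n+1)}(t-a)}$ together with the compactness of $\lambda\hat\Sigma'_a\subset\mathrm{int}(\hat\Sigma_a)$ to fit the rescaled barrier inside a single $\hat\Sigma_{i_\lambda,a}$, then letting $\lambda\uparrow1$. Your write-up fills in the Gauss-map computation for (iii) and the interior-of-a-union fact that the paper leaves implicit, but the underlying argument is the same.
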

%  Though we expect our solutions are closed under limits of other weaker notions, the stated version (1) above will be enough for this work.
Lemma \ref{lem-26}, in particular, implies that in the case that  $\Sigma_0$ is compact, 
the maximal time of existence in Theorem \ref{thm-exist1} is given by $T=\text{Vol}(\hat \Sigma_0)/\omega_n $. 
The following is the Harnack inequality for the GCF and its consequence to graphical solutions.
\begin{theorem}[B. Chow \cite{Ch91}, Proposition 3.2 \cite{CCD2}]\label{thm-Chow} Let $\Sigma_t$, $t\ge0$, be a smooth compact strictly convex solution to the {\em GCF}. Then,  
\be\label{eqn-harnK}
\fr{1}{ K}(\p_t K - b^{ij}\nabla_iK \nabla_j K  )\ge -\fr{n}{1+n} \fr{1}{t}.
\ee

Let $x_{n+1}=u(x',t)$, be a smooth strictly convex graphical solution to the {\em GCF} which could be possibly incomplete. If the solution satisfies \eqref{eqn-harnK},  
then \be\label{eqn-utt22}
 u_{tt} \ge - \fr{n}{1+n}\fr{u_t}{t}\ee and hence, for $t_2\ge t_1>0$, 
 \be\label{eqn-u123}
 u_t({\cdot,t_2})\ge \left( \fr{t_1}{t_2}\right)^{\fr{n}{1+n}}u_t(\cdot,t_1).\ee

\end{theorem}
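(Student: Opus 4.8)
The final statement is Theorem \ref{thm-Chow} — the Harnack inequality for GCF and its consequence for graphical solutions. Let me sketch a proof plan.

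The plan is to derive the Harnack inequality \eqref{eqn-harnK} first, then feed it into the graphical setting. For the Harnack estimate itself, the approach of B.~Chow is to study the evolution of the quantity $Z := \frac{1}{K}(\partial_t K - b^{ij}\nabla_i K \nabla_j K)$ — or more precisely $\partial_t \log K$ along the flow — and to show that $Z$ (or a closely related trace-type quantity) satisfies a parabolic differential inequality of the form $(\partial_t - \li) Z \ge c\, Z^2 + (\text{lower order})$, with the ODE comparison $Z' \ge \frac{1+n}{n} Z^2$ giving $Z \ge -\frac{n}{1+n}\frac{1}{t}$ since the solution starts at $t=0$. First I would set up the linearized operator $\li = \alpha K^\alpha b^{ij}\nabla^2_{ij}$ with the correct $\alpha$ matching the GCF (here $\alpha = 1$ in the normalization used, so the operator is $K\, b^{ij}\nabla^2_{ij}$ acting on support-function-type quantities), then compute the evolution equations for $K$, for $b^{ij}$, and for the gradient terms, and assemble them. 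The Harnack quantity is designed precisely so that the bad reaction terms cancel and one is left with a favorable sign; the extraction of \eqref{eqn-harnK} is then the maximum principle applied on $[\epsilon, t]$ and letting $\epsilon \to 0$, using that $K$ is smooth and positive on a compact strictly convex solution. Since this is a known result of Chow, in the write-up I would either cite \cite{Ch91} directly or reproduce the one-line ODE comparison after stating the differential inequality.

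For the second part — the consequence for graphical solutions — the key translation is that for a graph $x_{n+1} = u(x',t)$ evolving by GCF, the normal speed satisfies $u_t = K \sqrt{1+|Du|^2}$ (up to sign conventions), and more importantly $\partial_t K - b^{ij}\nabla_i K \nabla_j K$ has a clean expression in terms of $u_{tt}$ and $u_t$. Specifically, differentiating $u_t = K\sqrt{1+|Du|^2}$ in $t$ and using that the spatial derivatives of $K$ transform into the $b^{ij}\nabla_i K\nabla_j K$ term through the graphical parametrization, one obtains that \eqref{eqn-harnK} is equivalent to $u_{tt} \ge -\frac{n}{1+n}\frac{u_t}{t}$, which is \eqref{eqn-utt22}. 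Then \eqref{eqn-u123} follows by integrating this ODE inequality in $t$: writing it as $\frac{d}{dt}\log u_t \ge -\frac{n}{1+n}\frac{1}{t}$ (valid since $u_t = K\sqrt{1+|Du|^2} > 0$ on a strictly convex solution) and integrating from $t_1$ to $t_2$ gives $\log\frac{u_t(\cdot,t_2)}{u_t(\cdot,t_1)} \ge -\frac{n}{1+n}\log\frac{t_2}{t_1}$, i.e. $u_t(\cdot,t_2) \ge (t_1/t_2)^{n/(1+n)} u_t(\cdot,t_1)$.

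The main obstacle I expect is the first part: correctly carrying out the tensorial computation of the evolution of the Harnack quantity and verifying the cancellation of reaction terms — this requires care with the commutator terms $[\nabla_i, \nabla_j]$, the evolution of $b^{ij}$ (which involves $h^{-1}\dot h h^{-1}$ and hence second derivatives of $K$), and keeping track of the precise coefficient $\frac{n}{1+n}$ coming from the homogeneity degree of $K$ as a function of the principal curvatures. The graphical translation in the second part is more routine once one has the identity $u_t = K\sqrt{1+|Du|^2}$ and computes $u_{tt}$, but one must be attentive that the spatial gradient term $b^{ij}\nabla_iK\nabla_jK$ in \eqref{eqn-harnK} is exactly what is needed to absorb the $|Du|$-dependent terms arising from differentiating $\sqrt{1+|Du|^2}$ in time — this is the reason the Harnack expression takes that particular form. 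Since both halves are essentially due to Chow \cite{Ch91} with the graphical consequence recorded in \cite{CCD2}, in the paper I would present the ODE-comparison argument for \eqref{eqn-harnK} briefly and give the graphical computation in slightly more detail, or simply cite both references if a full reproduction is not warranted.
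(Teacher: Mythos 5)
The paper does not give a proof of Theorem~\ref{thm-Chow}; it is cited to B.~Chow \cite{Ch91} for the Harnack inequality \eqref{eqn-harnK} and to Proposition~3.2 of \cite{CCD2} for the graphical reformulation \eqref{eqn-utt22}--\eqref{eqn-u123}, and the theorem is then used as a black box (notably in Lemma~\ref{lem-harnack} and Proposition~\ref{prop-harnack3}). So there is no in-paper argument to compare against; what you have written is a reconstruction of the cited proofs.

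As such a reconstruction, your outline is correct. A few remarks on the two halves. For the first half, the normalization of the ODE is consistent: with $Z := \frac{1}{K}(\partial_t K - b^{ij}\nabla_i K\nabla_j K)$, the comparison $Z' \ge \frac{1+n}{n}Z^2$ does give the sharp bound $Z \ge -\frac{n}{1+n}\frac{1}{t}$, and the coefficient $\frac{1+n}{n}$ is the one dictated by the degree-$n$ homogeneity of $K$. For the second half, the essential ingredient — which you gesture at but do not carry out — is the exact identity
\[
\frac{u_{tt}}{u_t} \;=\; \frac{1}{K}\bigl(\partial_t K - b^{ij}\nabla_i K\nabla_j K\bigr),
\]
where the right-hand side is evaluated at the surface point $(x',u(x',t))$ in the normal parametrization. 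The $b^{ij}\nabla_i K\nabla_j K$ term is precisely the reparametrization drift one picks up when passing from the normal parametrization (velocity $-K\nu$) to the vertical parametrization (fixed $x'$), and this identity is exactly what \cite[Prop.~3.2]{CCD2} establishes; one can sanity-check it on the shrinking sphere and on translating solitons (where both sides vanish). Once this identity is in hand, your derivation of \eqref{eqn-utt22} from \eqref{eqn-harnK}, and the integration $\frac{d}{dt}\log u_t \ge -\frac{n}{1+n}\frac{1}{t}$ giving \eqref{eqn-u123}, are correct and complete (using $u_t = K\sqrt{1+|Du|^2}>0$ on a strictly convex graph). Your instinct at the end — to cite both references rather than reproduce the tensor computation — matches what the paper actually does.
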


We finish this section with the following regularity result, which roughly says that a weak GCF becomes smooth on region which are away
from the initial surface. A similar property holds for other degenerate equations, such as the porous medium equation. While this property is known to hold true for GCF as well,  its proof
 doesn't seem to exist  in the literature (see in \cite{DS} for a related result). We   include it here for completeness.  The proof uses some of main results shown in \cite{CCD2}.  We suggest to the reader to  skip the proof of this proposition at their first reading.

\begin{prop}\label{prop-harnack3} Assume that  $\Sigma_0=\p \hat \Sigma_0$ is  a  convex hypersurface which is either compact or non-compact asymptotic to a cylinder $\Omega\times \mathbb{R}$ of bounded cross-section. Let $\Sigma_t$ be a weak GCF solution  starting at   $\Sigma_0$.  If a point $p\in \Sigma_{t'}$,  for some $t'>0$,  is away from $\Sigma_0$ and $\hat \Sigma_{t'}$ has non empty interior, then the solution $\Sigma_t$ is  strictly convex and smooth around $p$ in spacetime,  i.e. there is $B_r(p)\subset \mathbb{R}^{n+1}$ such that $B_r(p)\cap \Sigma_T$ is smooth for $t\in(0,t']$. Moreover,   the Harnack inequality \eqref{eqn-harnK} holds on smooth part of $\Sigma_t$.  

\begin{proof} Consider a sequence smooth strictly convex compact solutions $\Sigma_{i,t}$ which approximates $\Sigma_t$ from the inside in the sense that $\mathrm{cl}({\cup_i\hat \Sigma_{i,t}}) =  \hat \Sigma_t$. See the proof of Theorem \ref{thm-exist1}, which is Theorem 2.7 in \cite{CCD2}, for the construction of such an approximation.  Let us denote the graphical representations of lower part of $\Sigma_{i,t}$ by $x_{n+1}=u_i(x,t)$. Since $\Sigma_{i,t}$ are compact solutions, $\Sigma_{i,t}$ and $u_{i}(x,t)$ satisfy Theorem \ref{thm-Chow}.

\smallskip 

Let $p$ be a point in $\Sigma_{t'} \setminus \Sigma_0$ for some $t'>0$.  After a translation  we may assume that $p = (0,0)\in\mathbb{R}^{n+1}$. Next, 
since a convex hypersurface is locally a convex graph, after a rotation and renaming the index $i$, we may find positive constants $r'$, $\delta$, $L$ and $M$ with the following significance: all $\Sigma_{i,t'}$ enclose the sphere of radius $r'$ centered at $(0,L)\in \mathbb{R}^{n+1}$ and the graphical representation $u_{i}(x,t)$ defined on   $D_{r'}(0)= \{x\in\mathbb{R}^n\,:\, |x|\le r'\}$ for $t\in[0,t']$. Moreover on $D_{r'}(0)$,  $-M \le u_i(x,0)\le -2\delta$ and  $-\delta \le u_i(x,t')$.  Our goal is to find some $\e >0$, $C_1$, $C_2$ such that 
\be \label{eq-goal11} 0<C_2 \le \p_t u_i(x,t) \le C_1, \qquad \text{ on } (x,t) \in D_{r'/2}(0) \times [t'-\e ,t'] . \ee
Once we have these graphical speed bounds. Proposition 4.3, Theorem 4.4, and  the argument in  Corollary 4.5 in \cite{CCD2} can be applied to $x_{n+1}=u_i(x,t)$ on $D_{r'/4}(0) \times [t'-\tfrac\e2 ,t']$ showing positive upper and lower bound on the curvature and gradient estimate (uniform in $i$).  This would give a uniform smooth estimate and would show the spacetime $C^{\infty}$  convergence of $\Sigma_{i,t}$ to $\Sigma_t$ around $p$.  
 
\smallskip 
The main tool in showing \eqref{eq-goal11} is the Harnack estimate. Let us fix a point $x'\in D_{r'/2}(0)$.    
 First, we are going to show the  upper bound for  $\p_t u_i(x',t')$. Using the  spherical  solution  starting from the sphere of radius $r'$ centered at $(0,L)\in \mathbb{R}^{n+1}$ at time $t=t'$  as a barrier, we may find some $t''>t'$  such that $u_i(x',t) \le L$ on $x'\in D_{r'/2}(0)$ and $t\le t''$. Inequality \eqref{eqn-u123} 
of  Theorem \ref{thm-Chow}, yields that for any $0<t<t''$, 
\[\ba M+L &\ge u_i(x',t'')-u_i (x',t) \\&= \int_{t}^{t''}\p_tu_i(x',s) ds  \ge \p_t u_i(x' , t)\int_{t}^{t''}   \left( \tfrac{t}{s}\right)^{\scriptstyle\fr{n}{1+n}} ds,\ea\] 
and it gives    \[\p_t u_i(x',t) \le \frac{M+L}{(1+n)\, t ^ {\frac{n}{1+n} }[(t'')^{\frac{1}{1+n}} -t ^{\frac{1}{1+n}}] }. \] In particular, for $t\in[t'/2,t']$, we have \[\p_t u_i(x',t) \le C_1, \quad \text{ for some }C_1=C_1(M+L,t',t'',n)\]
proving the upper bound in \eqref{eq-goal11}

\smallskip
Let us now show the  lower bound of $\p_t u_i(x',t)$ in \eqref{eq-goal11}. For $0\le \tau\le t'/2$, the previous upper bound implies 
\[\ba u_i(x',t'-\tau) &\ge u_i(x',t')- \tau C_1 \\&= u_i(x',0) + (u_i(x',t')-u_i(x',0)) -\tau C_1 \ge u(x',0) + \delta - \tau C_1. \ea\] 
Hence, for  $\tau \le \min(\tfrac{t'}2 ,  \tfrac{\delta}{2C_1})=:\e$, by integrating  inequality \eqref{eqn-u123} of  Theorem \ref{thm-Chow}, we obtain 
\[ \frac{\delta}{2}\le u(x',t'-\tau)-u (x',0)= \int_0 ^{t'-\tau}  \p_t u (x',s) ds \le (1+n) (t'-\tau) \, \p_t u(x',t'-\tau)\] 
which readily shows the  lower bound 
$$\p_tu_i(x',t) \ge \frac{\delta}{2(1+n)t'}=:C_2 , \quad \text{for} \,\,  t \in [t'-\e,t'].$$  This proves the desired estimate  which implies the smooth convergence of $\Sigma_{i,t}$
 to $\Sigma_t$  around the point $p$.
%
%
%\smallskip 
%    
%Now let us consider the compact case. Suppose $p \in \Sigma_{t'} \setminus \Sigma_0$ and assume without loss of generality that $p=(0,0) \in \R^{n+1}$.  Since $\hat \Sigma_{t'}$ is a compact convex set with non empty interior, there is $r'>0$ such that $\Sigma_{i,t}$ can be locally written as a graph $x_{n+1}=u_i(x,t)$ on $x\in D_{r'}(0)$, and $u_{i}(x,t)$ has the following properties:  there are positive constants $M$, $\delta$, $l$ such that 
%$-M\le u_i(x,0)\le -2\delta$, $ u_i(x,t') \geq -\delta$, 
%and all $\hat \Sigma_{i,t'}$ contain the sphere of radius $r'>0$ centered at $(0,l)\in \mathbb{R}^{n+1}$.  We  may then repeat the same argument 
%we used for the  non-compact case to obtain  uniform regularity estimates which imply   smooth convergence.
 \end{proof}
\end{prop}

\section{Convergence of solution around tip}\label{sec-asymp}

Throughout this section we will assume that $\Sigma_t$, $ t \in (-\infty, T)$ is a weak ancient complete  solution to the GCF  which is asymptotic to the cylinder $\Omega \times \R$,
as $t \to -\infty$ (see Definition \ref{def-asymptcy}). The goal is to show if we translate the solution and observe our solution around the tip region, then as $t\to-\infty$ it converges to the unique translating soliton asymptotic to $\Omega \times \mathbb{R}$.  

As we  mentioned earlier, an ancient solution may touch the boundary of its asymptotic cylinder $\Omega\times \mathbb{R}$ (c.f. in  \cite{CDL}).  For this reason,  some of the results in this section are written and shown in terms of $\mathrm{int}(\hat \Sigma_t)$. 

%{\color{red} Since $\hat \Sigma_t$ is defined as an open set, we have $\hat \Sigma_t \subset \Omega\times \mathbb{R}$. Recall also that we define $\Omega$ as an open set in Definition \ref{def-asymptcy}.  This will have some notational advantages such as $\cup_t \hat \Sigma_t=\Omega \times \mathbb{R}$, a result shown in Lemma \ref{lem-convex}. }

%{\color{red} please make it consistent with the Definition 1.1} 
%From this and later section, we assume the ancient solution $\Sigma_t$ has asymptotic cylinder $\Omega\times \mathbb{R}$.

\begin{definition}\label{defn-hpm}
For an ancient convex GCF solution $\Sigma_t$, $t \in (-\infty, T)$, asymptotic to the cylinder $\Omega \times \R$, we define  $$h^+(t) := \sup_{x\in\Sigma_t} \langle x, e_{n+1} \rangle
\qquad \mbox{and} \qquad h^-(t) := \inf_{x\in\Sigma_t} \langle x, e_{n+1} \rangle$$to be the maximum and minimum heights, respectively. They are both finite if $\Sigma_t$ is compact. For the non-compact case, after  reflection, we will assume that $-\infty <h^-(t) <+\infty$   and $h^+(t)=\infty$. 

We also define  $p^+(t)$ and $p^-(t)\in\Sigma_t$ to be the tips of $\Sigma_t$ by the condition
\be\label{eqn-def-tips}
\langle p^+,e_{n+1}\rangle= h^+ \qquad \mbox{and} \qquad \langle p^-,e_{n+1}\rangle= h^-. 
\ee
In the non-compact case we only have one tip $p^-(t)$.

\end{definition}

\begin{figure}
\centering
\def\svgscale{1}{
%% Creator: Inkscape 1.0beta2 (cdc1be5, 2019-12-07), www.inkscape.org
%% PDF/EPS/PS + LaTeX output extension by Johan Engelen, 2010
%% Accompanies image file '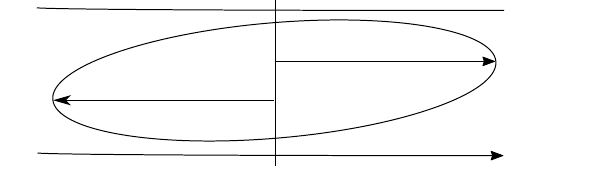' (pdf, eps, ps)
%%
%% To include the image in your LaTeX document, write
%%   \input{<filename>.pdf_tex}
%%  instead of
%%   \includegraphics{<filename>.pdf}
%% To scale the image, write
%%   \def\svgwidth{<desired width>}
%%   \input{<filename>.pdf_tex}
%%  instead of
%%   \includegraphics[width=<desired width>]{<filename>.pdf}
%%
%% Images with a different path to the parent latex file can
%% be accessed with the `import' package (which may need to be
%% installed) using
%%   \usepackage{import}
%% in the preamble, and then including the image with
%%   \import{<path to file>}{<filename>.pdf_tex}
%% Alternatively, one can specify
%%   \graphicspath{{<path to file>/}}
%% 
%% For more information, please see info/svg-inkscape on CTAN:
%%   http://tug.ctan.org/tex-archive/info/svg-inkscape
%%
\begingroup%
  \makeatletter%
  \providecommand\color[2][]{%
    \errmessage{(Inkscape) Color is used for the text in Inkscape, but the package 'color.sty' is not loaded}%
    \renewcommand\color[2][]{}%
  }%
  \providecommand\transparent[1]{%
    \errmessage{(Inkscape) Transparency is used (non-zero) for the text in Inkscape, but the package 'transparent.sty' is not loaded}%
    \renewcommand\transparent[1]{}%
  }%
  \providecommand\rotatebox[2]{#2}%
  \newcommand*\fsize{\dimexpr\f@size pt\relax}%
  \newcommand*\lineheight[1]{\fontsize{\fsize}{#1\fsize}\selectfont}%
  \ifx\svgwidth\undefined%
    \setlength{\unitlength}{282.75075287bp}%
    \ifx\svgscale\undefined%
      \relax%
    \else%
      \setlength{\unitlength}{\unitlength * \real{\svgscale}}%
    \fi%
  \else%
    \setlength{\unitlength}{\svgwidth}%
  \fi%
  \global\let\svgwidth\undefined%
  \global\let\svgscale\undefined%
  \makeatother%
  \begin{picture}(1,0.32001711)%
    \lineheight{1}%
    \setlength\tabcolsep{0pt}%
    \put(0.56105468,0.16254082){\color[rgb]{0,0,0}\makebox(0,0)[lt]{\lineheight{1.25}\smash{\begin{tabular}[t]{l}$h^+(t)$\end{tabular}}}}%
    \put(0,0){\includegraphics[width=\unitlength,page=1]{Figure1.pdf}}%
    \put(0.22034503,0.16535812){\color[rgb]{0,0,0}\makebox(0,0)[lt]{\lineheight{1.25}\smash{\begin{tabular}[t]{l}$h^-(t)$\end{tabular}}}}%
    \put(0.4021515,0.01291217){\color[rgb]{0,0,0}\makebox(0,0)[lt]{\lineheight{1.25}\smash{\begin{tabular}[t]{l}$x_{n+1}=0$\end{tabular}}}}%
    \put(0.77906868,0.01884721){\color[rgb]{0,0,0}\makebox(0,0)[lt]{\lineheight{1.25}\smash{\begin{tabular}[t]{l}$x_{n+1}$\end{tabular}}}}%
    \put(0.83241943,0.16765388){\color[rgb]{0,0,0}\makebox(0,0)[lt]{\lineheight{1.25}\smash{\begin{tabular}[t]{l}$p^+(t)$\end{tabular}}}}%
    \put(-0.00266028,0.16122042){\color[rgb]{0,0,0}\makebox(0,0)[lt]{\lineheight{1.25}\smash{\begin{tabular}[t]{l}$p^-(t)$\end{tabular}}}}%
  \end{picture}%
\endgroup%

\caption{Definition \ref{defn-hpm}}}
\end{figure}

\begin{definition} \label{def-32}
Let $\Sigma_t = \partial \hat \Sigma_t$, $t \in (-\infty, T)$  be an ancient convex GCF solution,  asymptotic to the cylinder $\Omega \times \R$. For each $t \in (-\infty, T)$,   $\mathrm{int}(\hat \Sigma_t)$ can be represented as  the region between two graphs $u^+(\cdot,t)$ and $u^-(\cdot,t)$ defined on some  domain $\Omega_t \subset \Omega$ as follows: \bee  \mathrm{int}(\hat \Sigma_t) = \{ (x',x_{n+1}) \in \mathbb{R}^{n+1} \, :\, u^-(x',t) < x_{n+1} < u^+(x',t)\quad\text{and}\quad x'\in \Omega_t\}.\eee
Here, $\Omega_t$ is  the image of the projection of $\mathrm{int}(\hat\Sigma_t)$ to the hyperplane $\{x_{n+1} =0\}$, which is an open bounded convex set. For non-compact case, we set $u^+ = \infty$.  
\end{definition}

Note that since  $\Omega \times \R$ is the smallest open cylinder containing $\cup_t\mathrm{int}(\hat \Sigma_t)$, the domains $\Omega_t$ increase to $\Omega$,  as $t\to-\infty$. For the non-compact case, the last assertion in Theorem \ref{thm-exist1} implies $\Omega_t=\Omega$ for all $t$.  The functions  $u^+(\cdot, t)$ and $u^-(\cdot, t)$ are graphical solutions to the  GCF, which are defined  on the   domain $\Omega_t$. 
Before proceeding to the next theorem, recall the definition of translating soliton $u_\Omega$ in Definition \ref{def-u_Omega}  and the speed $\lambda$ in  Definition \ref{def-lambda}.

\begin{theorem}\label{thm-asymptotic} Let $\Sigma_t $, $t \in (-\infty, T),$ be a complete ancient convex weak  solution of GCF which is  asymptotic to the cylinder $\Omega \times \R$. 
Then, as  $t\to-\infty$, $\Sigma_t- h^{-}(t)e_{n+1}$ converges locally smoothly to the unique translating soliton $\{x_{n+1}=u_\Omega(x)\}$.  In the case that $\Sigma_t$ is compact, 
  $\Sigma_t- h^{+}(t)e_{n+1}$ also converges locally smoothly to the translating soliton $\{x_{n+1}=-u_\Omega(x)\}$.  More precisely, we have
 \[u^-(x,t)-h^-(t) \rightarrow u_\Omega(x) \,\, \, \text{and}\,\,\,  u^+(x,t)-h^+(t) \rightarrow -u_\Omega(x) \,\,\,  \text{in } C^\infty_{loc}(\Omega)\]
as $t\to-\infty$. 
\end{theorem}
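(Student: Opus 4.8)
\emph{Plan.} The plan is to prove $u^-(\cdot,t)-h^-(t)\to u_\Omega$ in $C^\infty_{loc}(\Omega)$ as $t\to-\infty$; the companion statement $u^+(\cdot,t)-h^+(t)\to-u_\Omega$ in the compact case will then follow at once by applying this to the reflected solution $\Sigma_t^*$ obtained from $\Sigma_t$ by reflection about $\{x_{n+1}=0\}$, which is again a weak ancient solution asymptotic to $\Omega\times\R$, whose lower graph is $-u^+(\cdot,t)$ and whose lowest height is $-h^+(t)$. Write $\tilde u(x,t):=u^-(x,t)-h^-(t)$, a convex function of $x$ with $\inf\tilde u(\cdot,t)=0$. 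It suffices to show that along every sequence $t_i\to-\infty$ some subsequence of $\tilde u(\cdot,t_i)$ converges in $C^\infty_{loc}(\Omega)$ to $u_\Omega$: since the limit is then independent of the sequence, the full convergence as $t\to-\infty$ follows.

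First I would record the form the Harnack inequality takes for an ancient solution. By Proposition \ref{prop-harnack3} the solution is smooth and strictly convex at every spacetime point where $\hat\Sigma_t$ has nonempty interior, so \eqref{eqn-harnK} holds along $\Sigma_t$; applying \eqref{eqn-u123} on an interval $[a,T)$ and letting $a\to-\infty$ shows that $t\mapsto\partial_t u^-(x,t)$ is non-decreasing for each fixed $x\in\Omega$ (and positive, by the strong maximum principle). Next, fix $t_i\to-\infty$ and consider the rescaled (only vertically translated) solutions $\Sigma^{(i)}_s:=\Sigma_{t_i+s}-h^-(t_i)e_{n+1}$, $s\in[0,T-t_i)$, with nonnegative lower graphs $u_i(x,s):=u^-(x,t_i+s)-h^-(t_i)$. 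Combining the monotonicity of $\partial_t u^-$ with the local interior estimates of \cite{CCD2} for solutions asymptotic to a cylinder (the ones invoked in the proof of Proposition \ref{prop-harnack3}) should yield uniform-in-$i$ $C^\infty$ bounds for $u_i$ on $K\times[0,S]$ for every $K\Subset\Omega$ and every $S<\infty$. Passing to a subsequence and diagonalizing, $u_i\to w$ in $C^\infty_{loc}(\Omega\times[0,\infty))$, where each $w(\cdot,s)$ is convex with $\inf_\Omega w(\cdot,0)=0$, and $\{x_{n+1}=w(x,s)\}$ is the lower graph of a non-compact weak GCF solution asymptotic to $\Omega\times\R$ (here one uses $\Omega_t\uparrow\Omega$ and the uniqueness in Theorem \ref{thm-exist1} to identify $w$ with the weak solution starting from $w(\cdot,0)$).

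Having the limit flow $w$, I would identify it as the translating soliton. From the $C^\infty_{loc}$ convergence and the monotonicity of $\partial_t u^-$,
\[
\partial_s w(x,s)=\lim_{i\to\infty}\partial_t u^-(x,t_i+s)=\lim_{\tau\to-\infty}\partial_t u^-(x,\tau)=:\mu(x),
\]
which is independent of $s$, hence $w(x,s)=w(x,0)+s\,\mu(x)$. Applying the forward-in-time convergence of \cite{CCD2} to the weak solution with lower graph $w$ gives $w(\cdot,s)-\inf_\Omega w(\cdot,s)\to u_\Omega$ in $C^\infty_{loc}(\Omega)$ as $s\to\infty$; were $\mu$ non-constant, choosing $x_1,x_2$ with $\mu(x_1)\neq\mu(x_2)$ would force $w(x_1,s)-w(x_2,s)=[w(x_1,0)-w(x_2,0)]+s[\mu(x_1)-\mu(x_2)]$ to diverge, contradicting this convergence. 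Thus $\mu$ is constant, so $w(\cdot,s)-\inf_\Omega w(\cdot,s)\equiv w(\cdot,0)$ and therefore $w(\cdot,0)=u_\Omega$. Since $u_i(\cdot,0)=\tilde u(\cdot,t_i)\to w(\cdot,0)$ in $C^\infty_{loc}(\Omega)$, this is precisely the required subsequential convergence.

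\emph{Main obstacle.} The hard part is the uniform estimate in the second paragraph, and in particular the local $C^0$ upper bound $\tilde u(x,t)\le C_K$ for $x\in K\Subset\Omega$ as $t\to-\infty$ — equivalently, that the rescaled bodies $\hat\Sigma^{(i)}_0$ stay non-degenerate near the moving tip $p^-(t_i)$ and that their cross-sections neither pinch nor retreat to a smaller cylinder. I expect this to require a sphere (or translating-soliton) barrier anchored at $p^-(t_i)$ together with the a priori estimates of \cite{CCD2}, as well as a preliminary step showing that, along a subsequence, the horizontal projections of $p^-(t_i)$ stay in a fixed compact subset of $\Omega$ so that those estimates apply at the tip. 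Once these bounds are secured, the remainder is essentially formal, resting only on the Harnack inequality of Theorem \ref{thm-Chow} and the forward-time convergence theorem of \cite{CCD2}.
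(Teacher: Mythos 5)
Your proposal follows the same overall architecture as the paper's proof — blow down to a subsequential limit around the tip, use the Harnack-implied monotonicity of $\partial_t u^-$ to make the time derivative $s$-independent in the limit, then identify the limit flow as the translator — but it takes a genuinely different route in the final identification step. The paper computes
\[
\beta\,|\Omega| \;=\; \int_{Du_{\infty,0}(\Omega)} \frac{dp}{(1+|p|^2)^{(n+1)/2}} \;\le\; \lambda\,|\Omega|,
\]
combines it with the comparison-derived bound $\beta\ge\lambda$ (Lemma \ref{lem-beta}), and invokes Urbas's Alexandrov-sense uniqueness via the equality $|\R^n\setminus Du_{\infty,0}(\Omega)|=0$; you instead propose to apply the \emph{forward}-in-time convergence of \cite{CCD2} to the limit flow $w$ itself, using the divergence of $w(x_1,s)-w(x_2,s)$ to force the drift $\mu$ to be constant. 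Your route buys a cleaner identification and sidesteps the mass-of-gradient-image computation, at the cost of needing to verify carefully that the $C^\infty_{loc}$ limit $w$ really is a weak non-compact GCF solution whose asymptotic cylinder is exactly $\Omega\times\R$ (not something smaller); this relies on $\cup_t\operatorname{int}\hat\Sigma_t = \Omega\times\R$, i.e.\ the content of Lemma \ref{lem-convex}, which your write-up tacitly assumes.

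The genuine gap, which you honestly flag as "the main obstacle" but do not resolve, is precisely what the paper supplies in Lemma \ref{lem-beta} and Proposition \ref{prop-graphspeeds}: a positive lower bound on $\partial_t u^-$ on compact subsets of $\Omega$, uniform as $t\to-\infty$. The upper bound is essentially free from the Harnack monotonicity; the lower bound is not — it requires first proving that the asymptotic tip speed $\beta=\lim_{t\to-\infty}\partial_t h^-$ satisfies $\beta\ge\lambda>0$ (the paper does this by comparing $\Sigma_t$ against a strictly larger translator from time $-\infty$, Lemma \ref{lem-beta}), and then a cone-comparison argument using the growth $h^-(t)\approx\beta t$ to propagate this into an interior lower speed bound. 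Without this, the rescaled graphs $u_i$ could degenerate (collapse onto a lower-dimensional set, or the gradient bounds from \cite{CCD2} fail to apply), and the crucial $C^\infty_{loc}$ compactness is unavailable. Also note that the monotonicity you use (non-decreasing $\partial_t u^-$) already presupposes the regularity statement of Proposition \ref{prop-harnack3}, which in turn rests on Lemma \ref{lem-convex}; both would need to be in place before the blowdown argument begins. So the plan is sound and, with the above ingredients supplied, would give a complete and slightly different proof — but as written the key quantitative estimates are missing.
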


\smallskip 

We need several lemmas before giving the  proof of this theorem. 

\begin{lemma} \label{lem-convex} We have $h^-(t) \to -\infty$ as $t\to -\infty$. If $\Sigma_t$ is compact, then $h^+(t) \to \infty$ as $t\to -\infty$ as well. Furthermore, in both cases we have  $\cup_{t}\mathrm{int}( \hat\Sigma_t)=\Omega\times \mathbb{R}$. 

\begin{proof} We give the proof assuming that  $\Sigma_t$ is compact. The proof in the non-compact case is similar. We first  show that $h^-(t) \to -\infty$ as $t\to -\infty$ by a  contradiction argument.  Suppose that $h_- \ge -C$,  for some $C<\infty$ for all time. This implies $\cup_t \mathrm{int}(\hat\Sigma_t )\subset \Omega\times [-C,\infty)$. Then we may find a translating soliton which is asymptotic to a slightly but strictly larger cylinder while containing $\Omega\times [-C,\infty)$. By comparing  this soliton with our solution $\Sigma_t$, starting at large negative times $t_0 \ll -1$, we conclude that  $\Sigma_t$ has to be empty for each $t$. 
This is a contradiction and hence $\lim_{t\to -\infty} h^-=-\infty$. Similarly, $\lim_{t\to -\infty} h^+=\infty$. 
\smallskip 

Let us now  see that  $\cup_{t} \mathrm{int}(\hat\Sigma_t)=\Omega\times \mathbb{R}$. 
 Since $\cup_{t}\mathrm{int}( \hat\Sigma_t)$ is a convex set and its boundary contains $p^+(t)$ and $p^-(t)$ which move to opposite infinities as $t\to-\infty$,  it easy to see that the sections $\cup_{t} \mathrm{int}(\hat\Sigma_t) \cap \{e_{n+1}=l\}$ have to be identical and thus $\cup_{t} \mathrm{int}(\hat\Sigma_t)$ is a convex open cylinder. Since, by assumption,  $\cup_{t} \mathrm{int}(\hat\Sigma_t)$ is contained in no smaller cylinder than $\Omega\times \mathbb{R}$,  we obtain the conclusion.
%
%For non-compact case, there is no tip $p^+$, but the argument works similarly as we can choose $p^+(t)$ by some point which has arbitrarily large $\langle p^+ , e_{n+1}\rangle$.
\end{proof}
\end{lemma}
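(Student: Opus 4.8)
The plan is to prove the first assertion, $h^-(t)\to-\infty$, by a barrier argument with a translating soliton defined over a strictly larger cylinder, and then to deduce $\cup_t\mathrm{int}(\hat\Sigma_t)=\Omega\times\R$ from the convexity of this union together with the fact that the tips run off to infinity.

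For the first assertion I would argue by contradiction. Suppose $h^-(t)\ge-C$ for all $t$, with $C<\infty$; since each $\hat\Sigma_t$ is a convex body contained in $\mathrm{cl}(\Omega)\times\R$, this gives $\cup_t\hat\Sigma_t\subset\mathrm{cl}(\Omega)\times[-C,\infty)$. Fix a smooth strictly convex bounded domain $\Omega'$ with $\mathrm{cl}(\Omega)\subset\Omega'$, let $u_{\Omega'}$ be the translator of Definition \ref{def-u_Omega} (so $\inf u_{\Omega'}=0$), and put $M':=\sup_{\mathrm{cl}(\Omega)}u_{\Omega'}<\infty$ and $\lambda':=\omega_n/(2|\Omega'|)>0$. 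For each $t_0\in\R$ the family
\[ \hat G^{t_0}_\tau:=\{(x,x_{n+1})\in\R^{n+1}:x\in\Omega',\ x_{n+1}\ge u_{\Omega'}(x)-M'-C+\lambda'(\tau-t_0)\},\qquad \tau\in\R, \]
is a smooth strictly convex GCF solution defined for all $\tau$ (a vertically and time shifted copy of the translator). Since $u_{\Omega'}(x)-M'-C\le-C$ on $\mathrm{cl}(\Omega)$ we have $\hat G^{t_0}_{t_0}\supset\mathrm{cl}(\Omega)\times[-C,\infty)\supset\hat\Sigma_{t_0}$, so, using that $\Sigma_t$ is a weak super-solution in the sense of Definition \ref{def-weaksol}, comparison from time $t_0$ gives $\hat\Sigma_\tau\subset\hat G^{t_0}_\tau$ for all $\tau\ge t_0$. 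Fixing an arbitrary $t_1<T$ and taking $\tau=t_1$, every point of $\hat\Sigma_{t_1}$ then has $x_{n+1}\ge-M'-C+\lambda'(t_1-t_0)$, hence $h^-(t_1)\ge-M'-C+\lambda'(t_1-t_0)$; letting $t_0\to-\infty$ forces $h^-(t_1)=+\infty$, which is absurd. Hence $h^-(t)\to-\infty$. In the compact case, applying this to the reflection of $\Sigma_t$ across $\{x_{n+1}=0\}$ — again a compact ancient solution asymptotic to $\Omega\times\R$ — gives $h^+(t)\to+\infty$; in the non-compact case $h^+(t)=+\infty$ by our convention.

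For the second assertion, recall that the GCF is nested: $\hat\Sigma_{t_2}\subset\hat\Sigma_{t_1}$ whenever $t_1\le t_2$ (by Lemma \ref{lem-26} and approximation, or simply because the surface moves inward). Hence $W:=\cup_t\mathrm{int}(\hat\Sigma_t)$ is an increasing (as $t\downarrow-\infty$) union of open convex sets, so it is open and convex, with $W\subset\Omega\times\R$ and horizontal projection $W_0:=\pi(W)\subset\Omega$. Fix $x_0\in W_0$ and a point $x=(x_0,\xi)\in W$, say $x\in\mathrm{int}(\hat\Sigma_{t_*})$, and let $t_j\to-\infty$ with $t_j\le t_*$. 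For any height $l<\xi$ and $j$ large (so that $h^-(t_j)<l$, by Step 1), the open segment from $x\in\mathrm{int}(\hat\Sigma_{t_j})$ (by nestedness) to $p^-(t_j)\in\hat\Sigma_{t_j}$ lies in $\mathrm{int}(\hat\Sigma_{t_j})\subset W$, and its point at height $l$ has horizontal coordinate $x_0+o(1)$ as $j\to\infty$ (because $h^-(t_j)\to-\infty$ sends the relevant convex-combination parameter to $0$); so $(x_0,l)\in\mathrm{cl}(W)$. For $l\ge\xi$ one argues identically with $p^+(t_j)$ in the compact case, and directly from $\{x_0\}\times(u^-(x_0,t_j),\infty)\subset\mathrm{int}(\hat\Sigma_{t_j})$ in the non-compact case. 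Thus $\{x_0\}\times\R\subset\mathrm{cl}(W)$ for every $x_0\in W_0$, i.e.\ $\mathrm{cl}(W)=\mathrm{cl}(W_0)\times\R$, and since $W$ is open and convex, $W=\mathrm{int}(\mathrm{cl}(W))=W_0\times\R$. Finally $\cup_t\Sigma_t\subset\cup_t\hat\Sigma_t\subset\mathrm{cl}(W)=\mathrm{cl}(W_0)\times\R$, so by the minimality in Definition \ref{def-asymptcy} the cylinder $\mathrm{cl}(W_0)\times\R$ is not strictly smaller than $\mathrm{cl}(\Omega)\times\R$; hence $\mathrm{cl}(W_0)=\mathrm{cl}(\Omega)$, $W_0=\Omega$, and $W=\Omega\times\R$.

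The main obstacle is the first assertion: the soliton barrier $\hat G^{t_0}$ can be started arbitrarily far in the past while still enclosing the entire family $\{\hat\Sigma_t\}$ — this is precisely where the contradiction hypothesis $h^-\ge-C$ enters — and, since it translates upward at the fixed positive speed $\lambda'$, it eventually overtakes any fixed time slice $\hat\Sigma_{t_1}$, which is impossible. The non-compact case needs no change, as the same inequality still forces $h^-(t_1)=+\infty$; the second assertion is then a routine convexity argument once the tips are known to escape to $\pm\infty$.
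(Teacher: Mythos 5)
Your proof is correct and follows essentially the same route as the paper: a barrier comparison with a translating soliton over a strictly larger cylinder to show $h^\pm(t)\to\pm\infty$, followed by the observation that the nested increasing union $\cup_t\mathrm{int}(\hat\Sigma_t)$ is an open convex set that must be a cylinder once both tips escape to $\pm\infty$, and then invoking minimality from Definition \ref{def-asymptcy}. You merely spell out more carefully the convexity argument for why the union is a cylinder (the paper dismisses this with ``it is easy to see''), which is a welcome clarification rather than a departure in method.
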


\smallskip 
From now on, we will concentrate on the convergence of $u^-(x,t)-h^-(t)$. The convergence of $u^+(x,t)-h^+(t)$ follows  by a similar arguments. 
Lemma \ref{lem-convex} and Proposition \ref{prop-harnack3} imply the  following regularity lemma. 

\begin{lemma} [c.f. Theorem \ref{thm-Chow} or \cite{Ch91}]\label{lem-harnack} An  ancient weak solution $\Sigma_t$, $t \in (-\infty, T)$,  
satisfying the assumptions of Theorem \ref{thm-asymptotic}  is  smooth and strictly convex away from $\p \Omega \times \mathbb{R}$ provided that $\hat \Sigma_t$ has non empty interior. Moreover,  the Harnack inequality \be\fr{1}{ K}(\p_t K - b^{ij}\nabla_iK\nabla_j K  )\ge 0 \ee 
holds on all points where $\Sigma_t$ is smooth. 
As a consequence, we have
\begin{enumerate}[{\em (i)}]  \item $\p_t u^- (x,t)= \frac{K}{\langle- \nu , e_{n+1}\rangle}$ satisfies $\p^2_{tt}u^-(x,t)\ge 0 $ on $(x,t)\in\cup _t (\Omega_t\times\{t\})$. \item Let $K(\nu,t)$ be the Gaussian curvature of a point on $\Sigma_t$ parametrized by its outer unit normal $\nu$. Then $\p_t K (\nu_0,t)\ge 0$ whenever $\Sigma_t$ is smooth around the point $p_0$ with $\nu(p_0,t)=\nu_0$. \end{enumerate}
\end{lemma}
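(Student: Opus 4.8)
The plan is to deduce the lemma from Proposition \ref{prop-harnack3} together with a limiting argument that exploits the fact that an ancient solution has existed for all past time. First I would establish the smoothness and strict convexity away from $\partial\Omega\times\R$. Fix $t'\in(-\infty,T)$ and a point $p\in\Sigma_{t'}$ with $p\notin\partial\Omega\times\R$, and suppose $\hat\Sigma_{t'}$ has non-empty interior. Choose any $s<t'$. By Lemma \ref{lem-convex}, $\cup_t\mathrm{int}(\hat\Sigma_t)=\Omega\times\R$, and since $h^-(s)\to-\infty$, $h^+(s)\to+\infty$ (compact case) as $s\to-\infty$, for $s$ sufficiently negative the point $p$ lies strictly between the heights $h^-(s)$ and $h^+(s)$ and away from $\hat\Sigma_s$; more precisely $p$ is ``away from $\Sigma_s$'' in the sense required by Proposition \ref{prop-harnack3} applied to the solution restricted to $[s,t']$ with initial surface $\Sigma_s$. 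That proposition then yields a ball $B_r(p)$ on which $\Sigma_t$ is smooth and strictly convex for $t$ in a punctured interval ending at $t'$, and gives the Harnack inequality \eqref{eqn-harnK} on that region, namely
\be
\fr{1}{K}(\p_t K - b^{ij}\nabla_i K \nabla_j K)\ge -\fr{n}{1+n}\fr{1}{t'-s}
\ee
(here the ``time origin'' is $s$, so the elapsed time is $t'-s$). Since $s<t'$ was arbitrary and can be taken $\to-\infty$, the right-hand side tends to $0$, which forces the sharp inequality $\tfrac1K(\p_t K - b^{ij}\nabla_i K\nabla_j K)\ge 0$ at $p$ at time $t'$. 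This is the standard trick by which ancient solutions upgrade Harnack estimates to their differential (``eternal'') form.

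Next I would derive consequence (i). Where $\Sigma_t$ is a smooth strictly convex graph $x_{n+1}=u^-(x,t)$, the GCF in graphical form gives $\p_t u^- = K/\langle-\nu,e_{n+1}\rangle = K\sqrt{1+|Du^-|^2}$, and the second half of Theorem \ref{thm-Chow} converts the differential Harnack inequality on $\Sigma_t$ into the inequality $u^-_{tt}\ge -\tfrac{n}{1+n}\tfrac{u^-_t}{t-s}$ for the graph; letting $s\to-\infty$ as above kills the right-hand side and yields $\p^2_{tt}u^-(x,t)\ge 0$ on $\cup_t(\Omega_t\times\{t\})$. (At points of $\Omega_t$ near $\partial\Omega$ where smoothness might fail one argues by approximation from the interior, or simply notes that $\Omega_t$ is exhausted by compactly contained smooth pieces; the convexity in $t$ of a locally Lipschitz function follows from its convexity on a dense open set.) For consequence (ii), parametrize $\Sigma_t$ by the outward normal $\nu$ and recall the standard identity for GCF that, for the support-function/Gauss-map parametrization, $\p_t K(\nu,t) = K\,\p_t K - b^{ij}\nabla_i K\nabla_j K$ restricted to fixed $\nu$ equals $K\cdot\big(\tfrac1K(\p_t K - b^{ij}\nabla_i K\nabla_j K)\big)$ — more carefully, the time derivative of $K$ at fixed Gauss image picks up exactly the combination appearing on the left of the Harnack inequality times $K>0$; hence $\p_t K(\nu_0,t)\ge 0$ wherever $\Sigma_t$ is smooth near the point with normal $\nu_0$.

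I expect the main obstacle to be purely bookkeeping rather than conceptual: verifying carefully that for each fixed $(p,t')$ the hypotheses of Proposition \ref{prop-harnack3} — that $p$ is ``away from $\Sigma_s$'' and that $\hat\Sigma_{t'}$ has non-empty interior — genuinely hold for all sufficiently negative $s$, uniformly enough that the constant $\tfrac{n}{1+n}\tfrac{1}{t'-s}$ is the only $s$-dependence and it indeed $\to 0$. This uses Lemma \ref{lem-convex} (so that the tips escape to $\pm\infty$ and the solution genuinely sweeps out the full cylinder, making any fixed interior point eventually ``exterior'' to the earlier slices) together with the fact that $\mathrm{int}(\hat\Sigma_{t'})\neq\emptyset$ by hypothesis. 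A secondary technical point is the passage from the differential Harnack on the smooth locus to the stated inequalities for $u^\pm$ across the possibly-singular boundary locus $\partial\Omega\times\R$, which I would handle by the interior approximation already used in the proof of Proposition \ref{prop-harnack3}, noting that convexity in $t$ and monotonicity of $K(\nu,\cdot)$ are closed conditions under local smooth convergence.
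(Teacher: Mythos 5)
Your proposal is essentially the argument the paper intends: it prefaces the lemma with ``Lemma~\ref{lem-convex} and Proposition~\ref{prop-harnack3} imply the following regularity lemma'' and leaves the proof to the reader, and you have correctly filled it in — use Lemma~\ref{lem-convex} to show any interior point $p$ away from $\partial\Omega\times\R$ is eventually away from $\Sigma_s$ as $s\to-\infty$, apply Proposition~\ref{prop-harnack3} with initial time $s$ to get smoothness and the Harnack bound $-\tfrac{n}{1+n}\tfrac{1}{t'-s}$, and send $s\to-\infty$; consequence (i) follows the same way from \eqref{eqn-utt22}, and (ii) from the Gauss-map identity $\p_t\big|_{\nu}K=\p_tK-b^{ij}\nabla_iK\nabla_jK$. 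One typographical slip: in your statement of the Gauss-map identity you wrote ``$\p_t K(\nu,t)=K\,\p_t K-b^{ij}\nabla_iK\nabla_jK$'' with a spurious factor of $K$ on $\p_tK$, but your subsequent self-correction makes the intended identity clear and the conclusion $\p_tK(\nu_0,t)\ge0$ is right.
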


In the following steps, we are interested in establishing  lower bounds on the Gaussian curvature $K$ for any weak ancient solution $\Sigma_t$
satisfying the assumptions of the Theorem \ref{thm-asymptotic}.

\begin{lemma}\label{lem-beta} 
Let \be\label{eq-1}\beta:=\lim_{t\to-\infty} \p_th^-=\lim_{t\to-\infty} K(-e_{n+1},t)\ee 
which exists by  Lemma \ref{lem-harnack}.  Then,  we have $\beta \ge \lambda$, where $\lambda$ is the speed of the  translating soliton  in Definition \ref{def-lambda}.\end{lemma}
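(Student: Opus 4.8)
The plan is to compare the ancient solution $\Sigma_t$ with the translating soliton from above and extract the inequality $\beta \ge \lambda$ from the behaviour of enclosed volumes (or heights) as $t \to -\infty$. First I would make sense of the limit in \eqref{eq-1}: by Lemma \ref{lem-harnack}(i), $\p_t h^-(t) = \p_t u^-(p^-(t),t)$ is monotone non-increasing in $t$ (it equals $K(-e_{n+1},t)$ at the tip, where the normal is $-e_{n+1}$, and $\p_t K(\nu_0,t) \ge 0$ by Lemma \ref{lem-harnack}(ii)); hence the limit $\beta$ as $t\to-\infty$ exists in $(0,+\infty]$. Moreover $h^-(t) \to -\infty$ by Lemma \ref{lem-convex}, so in fact $\beta \le \limsup_{t\to-\infty}\frac{-h^-(t)}{-t} < \infty$ after one checks $h^-(t)/t$ stays bounded (which follows since $\Sigma_t$ sits inside a fixed cylinder and a comparison with a large sphere or a soliton gives a linear-in-$t$ bound on the height). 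So $\beta \in (0,\infty)$ is well-defined.

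Next I would set up the comparison. Recall $\lambda = \frac{\omega_n}{2|\Omega|} = \frac{1}{|\Omega|}\int_{\R^n}(1+|p|^2)^{-(n+1)/2}\,dp$ is the speed of the soliton $u_\Omega$ asymptotic to $\Omega\times\R$. The key point is that, at large negative times, the tip region of $\Sigma_t - h^-(t)e_{n+1}$ is well inside the cylinder $\Omega\times\R$, while the cross-section $\Omega_t$ of $\hat\Sigma_t$ increases to $\Omega$. I would fix a small $\e > 0$, choose a slightly smaller convex domain $\Omega' \subset\subset \Omega$, and use the translating soliton $u_{\Omega'}$ with speed $\lambda' = \frac{\omega_n}{2|\Omega'|} > \lambda$. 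For $t_0 \ll -1$, the lower graph $u^-(\cdot,t_0)$ lies (after a vertical translation) above the soliton $u_{\Omega'}(\cdot) + h^-(t_0)$ on $\Omega'$ — because $\Omega_{t_0}\supset\Omega'$ and by convexity $u^-(\cdot,t_0)$ drops off steeply near $\p\Omega_{t_0}$, so it cannot stay below the (complete, going to $+\infty$ at $\p\Omega'$) soliton graph. Then the comparison principle for weak solutions (Lemma \ref{lem-26}(ii)) applied on $\Omega'$, together with the fact that the downward-translating soliton over $\Omega'$ moves with speed $\lambda'$, forces
\[
\p_t h^-(t) \ge -\lambda' \quad\text{wait — sign convention: } \p_t h^- > 0,
\]
so more precisely $\liminf_{t\to-\infty}\p_t h^-(t) \ge \lambda'$...

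Let me restate this more carefully: since the soliton descends at rate $\lambda'$ and stays below $\Sigma_t$ once it starts below, and since both $h^-(t)$ and the soliton's height $\to-\infty$, one gets $h^-(t) \le -\lambda'(t - t_0) + \text{const}$, hence $\beta = \lim \frac{h^-(t)}{t} \ge \lambda'$; oh, but $\beta$ is the limit of $\p_t h^-$, which by monotonicity equals $\lim h^-(t)/t$. Letting $\Omega'\uparrow\Omega$ gives $\lambda'\downarrow\lambda$ and therefore $\beta \ge \lambda$.

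I expect the main obstacle to be the barrier placement: showing rigorously that for $t_0 \ll -1$ the translated lower graph $u^-(\cdot,t_0) - h^-(t_0)$ lies above $u_{\Omega'}$ on all of $\Omega'$. This requires quantitative control on how $u^-(\cdot,t)$ behaves near the (moving) boundary $\p\Omega_t$ — one needs that $\Omega_t \to \Omega$ in a strong enough sense (Hausdorff, with convexity) that eventually $\Omega' \subset\subset \Omega_t$, and then a convexity/steepness argument that a bounded convex cap over $\Omega_t$ must poke above any fixed complete soliton graph over the strictly smaller $\Omega'$. The monotonicity facts from Lemma \ref{lem-harnack} are what make $\beta$ exist and equal $\lim h^-(t)/t$; the comparison principle from Lemma \ref{lem-26} is what converts the barrier into the speed inequality; everything else is bookkeeping with the explicit value of $\lambda$ from Definition \ref{def-lambda}.
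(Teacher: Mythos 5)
Your barrier is the wrong one, and it points the inequality the wrong way. You place the translator $u_{\Omega'}$ over a \emph{smaller} domain $\Omega'\subset\subset\Omega$, which has \emph{larger} speed $\lambda'=\tfrac{\omega_n}{2|\Omega'|}>\lambda$, and you try to slide it \emph{underneath} $u^-(\cdot,t_0)$. But this placement fails: $u_{\Omega'}(x)\to+\infty$ as $x\to\p\Omega'$, while $u^-(\cdot,t_0)$ is finite on all of $\overline{\Omega'}\subset\Omega$, so near $\p\Omega'$ one has $u_{\Omega'}+h^-(t_0)>u^-(\cdot,t_0)$, the opposite of what you assert; ``steepness near $\p\Omega_{t_0}$'' of $u^-$ does not help because the relevant boundary is $\p\Omega'$, well inside $\Omega_{t_0}$. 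Worse, even if you could fit the soliton epigraph inside $\hat\Sigma_{t_1}$ at some early time $t_1$, forward-in-time comparison would say that an enclosed barrier with tip rising at rate $\lambda'$ forces $h^-(t)\le h^-(t_1)+c+\lambda'(t-t_1)$ for $t\ge t_1$; dividing by $t-t_1$ and sending $t_1\to-\infty$ (with $c$ bounded) yields $\beta\le\lambda'$, hence $\beta\le\lambda$ --- an \emph{upper} bound. An inner barrier that is ``faster'' than the solution can only bound the solution's speed from above, not below, so this route cannot produce $\beta\ge\lambda$.

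The paper argues by contradiction with an \emph{outer} barrier built on a \emph{strictly larger} cylinder. Assume $\beta<\lambda-2\e$. Take a convex $\tilde\Omega\supset\supset\Omega$ whose translator $\bar\Sigma^-$ has speed $\lambda-\e<\lambda$. Because $u_{\tilde\Omega}$ is bounded on $\overline\Omega$, a vertical translate of the epigraph of $\bar\Sigma^-$ contains the whole slab $\Omega\times[0,\infty)$, hence the translate $\bar\Sigma^-+(-C_1+h^-(t))e_{n+1}$ encloses $\hat\Sigma_t$ for every $t<0$. Evolving both forward from time $t$ to time $0$ (the barrier rising by $(\lambda-\e)|t|$) and using the weak comparison principle, one finds that $\bar\Sigma^-+(-C_1+h^-(t)-(\lambda-\e)t)e_{n+1}$ still encloses $\hat\Sigma_0$. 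But if $\beta<\lambda-2\e$ then by \eqref{eq-1} one has $h^-(t)\ge(\lambda-2\e)t+o(|t|)$, so $-C_1+h^-(t)-(\lambda-\e)t\ge -C_1-\e t+o(|t|)\to+\infty$ as $t\to-\infty$, which pushes the enclosing barrier off to infinity and contradicts $\hat\Sigma_0\ne\emptyset$. You should reverse your setup to match this: enlarge the domain (slower soliton) and enclose from outside, rather than shrink the domain (faster soliton) and insert from inside.
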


\begin{proof} 
We argue by contradiction. By translating the solution in time if necessary, we may assume $\hat \Sigma_0$ is not empty.  Suppose that $\beta<\lambda -2\e$,  for some $\e>0$. Consider   a strictly larger cylinder containing $\Omega\times \mathbb{R}$ whose corresponding translating soliton  has the speed $\lambda-\e$. Let us denote by $\bar\Sigma^{-}$ to be such a soliton moving in positive $e_{n+1}$ direction and having $\inf_{x\in\bar \Sigma^-} \langle x, e_{n+1} \rangle =0$ (namely its tip is the point $(0,0) \in \R^{n+1}$).
 Then there is $C_1>0$ such that $\bar \Sigma^- - C_1 \, e_{n+1}$ encloses  $\Omega \times [0,\infty)$. Therefore, $\bar\Sigma^-  + (-C_1+h^-(t))\,  e_{n+1}$ encloses  $\Sigma_t$ for $t<0$.  Then, the  comparison principle implies that  the surface  $\bar\Sigma^- +(-C_1+h^-(t)+(\lambda-\e)\, \tau) \,  e_{n+1}$ encloses  $\Sigma_{t+\tau}$ for $\tau \ge0$. By letting $\tau=-t$, we conclude that $\bar \Sigma^- +(-C_1+h^-(t) -(\lambda-\e) t) \, e_{n+1}$ encloses  $\Sigma_0$ for all $t<0$. On the other hand,  as $t \to -\infty$, $h^-(t) \ge  (\lambda-2\e)\, t+ o(t)$ by  \eqref{eq-1}. Thus $$-C_1+h^-(t) -(\lambda-\e) \, t \ge  -C_1 -\e \, t +o(t) \to \infty,
\quad  \text{ as } t\to -\infty,$$ which contradicts the assumption $\hat \Sigma_0$ is non-empty. \end{proof}

\begin{proposition} \label{prop-graphspeeds}Let $\Sigma_t$, $t \in (-\infty, T)$ be an  ancient solution 
satisfying the assumptions of the Theorem \ref{thm-asymptotic}. Given any  $\Omega' \subset \subset \Omega$, there is $t_0<0$ and $c>0$ such that 
\[c\le  \p_tu^-\le c^{-1},  \qquad \text{for }t\le t_0.\] 
\begin{proof} Let $\e >0$ be such that $\dist(\Omega',\p\Omega)=2\e>0$.
By Lemmas \ref{lem-convex} and \ref{lem-beta},  we may choose $t_0\ll  -1$ so that the following hold for $t\leq t_0$: 
\begin{enumerate}[(i)]
\item $\beta \, (t_0-t)  \le h^-(t_0)-h^- (t)\le 2\beta \, (t_0-t)$. 
\item If  $\Omega_{t,0}$ is the  cross-section of $\hat \Sigma_t$ at   $x_{n+1}=0$, namely we have 
($\hat \Sigma_t \cap \{\la x,e_{n+1} \ra =0\}=: \Omega_{t,0}\times\{0\}$),  then $\Omega'\subset \Omega_{t,0}$ and $\dist(\Omega',\p\Omega_{t,0})\ge \e$.
\end{enumerate} 

\smallskip 
From now on, assume $t\leq t_0$ and let $x'\in \Omega'$ be an arbitrary point. 
By the   monotonicity of $\p_t u^-(\cdot, t)$ in $t$ which follows from  \ref{eqn-utt22}, we have  $\p_t u^-(x',t)\le \p_tu^-(x',t_0) \leq  \sup_{\Omega'} \p_t u(\cdot, t_0) <\infty$, 
which  proves the upper bound. 

\smallskip

We next show the lower bound. Since $\Sigma_t$ is convex, it has to {enclose} a cone generated by the base $\Omega_{t,0}\times\{0\}$ and the vertex $p^-(t)$. Together with 
property (ii) above, this implies the bound  $u^-(x',t)\le \frac{\e}{C} h^{-}(t)$,   where $C=\diam \Omega$ (recall that both $u^-$ and $h^-$ are negative).
Using also that  $h^{-}(t) \le u^-(x',t)$,  
we conclude that  for any $\tau_1<\tau_2 \leq t_0$, we have  \[u(x',\tau_2) \ge h^{-}(\tau_2) \ge  2\beta\,  (\tau_2-t_0) +h^- (t_0)\]   
and 
\[u(x',\tau_1) \le \frac{\e}{C} h^{-}(\tau_1) \le    \frac{\e}{C}(\beta \, (\tau_1-t_0) +h^- (t_0)).\]
Subtracting these two inequalities and using $t_0 <0$ and $h^-(t_0) <0$ yields 
\bee
\begin{split}
u(x',\tau_2)-u(x',\tau_1) &\ge \beta \big( 2\tau_2 -  \frac{\e}{C}\tau_1 \big)-\beta(2-\frac{\e}{C})t_0+\big(1- \frac{\e}{C}\big)h^-(t_0) \\
&\ge\beta( 2\tau_2 -  \frac{\e}{C}\tau_1 \big)+h^-(t_0).
\end{split}
\eee If we choose $\tau_1= \tfrac{C  (2+L)}{\e} \tau_2$ for $L>0$, the monotonicity of $u^-_t$ implies \[u^-_t(x',\tau_2)\big(\tau_2-\tfrac{C(2+L)}{\e}\tau_2 \big)\ge u(x',\tau_2)-
u(x',\tfrac{C(2+L)}{\e} \tau_2) \ge   L  \beta(-\tau_2)+h^- (t_0)\]
which gives 
 \[u^-_t(x',\tau_2) \ge \tfrac{L\beta}{\frac{C\, (2+L)}{\e}-1}+   \tfrac{h^- (t_0)}{\tau_2- \frac{C(2+L)}{\e}\tau_2 }.  \]
Finally,  taking $L\to\infty$, we obtain the desired lower bound  ${u^-_t(x',\tau_2) \ge \tfrac{\e\beta}{C}.}$  

\end{proof}
\end{proposition}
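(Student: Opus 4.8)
The plan is to obtain the upper bound immediately from the time-convexity of $\partial_t u^-$, and to get the lower bound by first squeezing $u^-(x',\cdot)$ between two functions which both decay linearly in $t$ at positive rates, and then converting that linear decay back into a pointwise slope bound, again via time-convexity.

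I would first fix $\Omega'\subset\subset\Omega$, write $2\e:=\dist(\Omega',\p\Omega)$ and $C:=\diam\Omega$, and let $\Omega_{t,0}$ denote the cross-section of $\hat\Sigma_t$ at height $x_{n+1}=0$. Since by Lemma~\ref{lem-convex} the sets $\Omega_{t,0}$ increase to $\Omega$ as $t\to-\infty$, while $h^-(t)\to-\infty$ with $\partial_t h^-\to\beta\ge\lambda>0$ by Lemma~\ref{lem-beta}, one can choose $t_0<0$ so that for all $t\le t_0$: (i) $\Omega'\subset\Omega_{t,0}$ with $\dist(\Omega',\p\Omega_{t,0})\ge\e$, and (ii) $\beta(t_0-t)\le h^-(t_0)-h^-(t)\le 2\beta(t_0-t)$. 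By Lemma~\ref{lem-harnack} each $u^-(\cdot,t)$ is smooth on $\Omega'$ and satisfies $\partial^2_{tt}u^-\ge0$ there, so $t\mapsto\partial_t u^-(x',t)$ is nondecreasing for each $x'\in\Omega'$; hence for $t\le t_0$ we get $\partial_t u^-(x',t)\le\partial_t u^-(x',t_0)\le\sup_{\Omega'}\partial_t u^-(\cdot,t_0)<\infty$, which is the upper bound.

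For the lower bound the geometric input is a cone estimate: by convexity $\hat\Sigma_t$ contains the solid cone with apex at the tip $p^-(t)$ (at height $h^-(t)<0$) and base $\Omega_{t,0}\times\{0\}$; evaluating the lower face of this cone over $x'\in\Omega'$, and using only that the horizontal projection of $p^-(t)$ lies in $\Omega$ (hence within distance $C$ of $x'$) together with $\dist(x',\p\Omega_{t,0})\ge\e$, one obtains $u^-(x',t)\le\theta\, h^-(t)$ for $t\le t_0$ with a fixed fraction $\theta=\theta(\e,C)\in(0,1)$ (essentially $\theta=\e/C$). Combining this with the trivial bound $h^-(t)\le u^-(x',t)$ and property (ii), for $\tau_1<\tau_2\le t_0$ one gets $u^-(x',\tau_2)-u^-(x',\tau_1)\ge h^-(\tau_2)-\theta\,h^-(\tau_1)\ge\beta(2\tau_2-\theta\,\tau_1)+h^-(t_0)$, using $t_0<0$ and $h^-(t_0)<0$ at the last step. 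Now I would choose $\tau_1=\tfrac{2+L}{\theta}\tau_2$ with $L>0$ (note $\tau_1<\tau_2$ since $\tau_2<0$), so that $2\tau_2-\theta\,\tau_1=-L\tau_2$; since $t\mapsto\partial_t u^-(x',t)$ is nondecreasing, $u^-(x',\tau_2)-u^-(x',\tau_1)\le(\tau_2-\tau_1)\,\partial_t u^-(x',\tau_2)$, which rearranges to
\[
\partial_t u^-(x',\tau_2)\ \ge\ \frac{L\beta}{\tfrac{2+L}{\theta}-1}+\frac{h^-(t_0)}{\tau_2-\tfrac{2+L}{\theta}\tau_2},
\]
where the second term is negative but $O(1/|t_0|)$ uniformly for $\tau_2\le t_0$, while the first term tends to $\theta\beta$ as $L\to\infty$. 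Since $\partial_t u^-(x',\tau_2)$ is a fixed number exceeding $\theta\beta-\delta$ for every $\delta>0$, it is $\ge\theta\beta>0$; taking $c:=\min\{\theta\beta,\ (\sup_{\Omega'}\partial_t u^-(\cdot,t_0))^{-1}\}$ completes the argument.

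The main obstacle is the lower bound, and within it the cone estimate, because the horizontal projection of the tip $p^-(t)$ may wander inside $\Omega$ as $t\to-\infty$, so the argument must be uniform in that location and use only the fixed margin $\e$ between $\Omega'$ and $\p\Omega_{t,0}$. The clean way to see it is that along the ray from the tip's projection through $x'$, the boundary $\p\Omega_{t,0}$ is met only after an extra length $\ge\e$ whereas the whole segment has length $\le C$, which pins down $\theta$. The only other slightly non-obvious point is the substitution $\tau_1=\tfrac{2+L}{\theta}\tau_2$ followed by $L\to\infty$, which is exactly what upgrades ``$u^-$ decays linearly in $t$ with rate $\ge\theta\beta$'' to ``$\partial_t u^-\ge\theta\beta$''; everything else is sign-bookkeeping and choosing $t_0$ sufficiently negative.
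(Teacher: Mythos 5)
Your proof is essentially identical to the paper's: same choice of $t_0$ with properties (i)--(ii), same monotonicity argument from $\partial^2_{tt}u^-\ge 0$ for the upper bound, same cone estimate giving $u^-(x',t)\le\tfrac{\e}{C}h^-(t)$, and the same substitution $\tau_1 = \tfrac{C(2+L)}{\e}\tau_2$ followed by $L\to\infty$ to extract the pointwise lower bound $\partial_t u^-\ge\tfrac{\e\beta}{C}$. The notation $\theta=\e/C$ is purely cosmetic, so there is nothing to compare beyond noting agreement.
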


\begin{prop}\label{prop-asymptotic} Let $\Sigma_t$, $t \in (-\infty, T)$, be an  ancient solution 
satisfying the assumptions of the Theorem \ref{thm-asymptotic}. For any  given sequence $\tau_i \to -\infty$,  passing to a subsequence if necessary,  $u^-_i(x,t):= u^-(x,t+\tau_i)-h^-(\tau_i)$ converges to $u_{\infty}(x,t)$ in $C^{\infty}_{loc}(\Omega\times \mathbb{R})$.  Moreover, the limiting  graphical solution $x_{n+1}=u_\infty(x,t)$ satisfies $\p_t u_\infty\equiv \beta $,  where $\beta$ is as in  \eqref{eq-1}.  The solution  $u_\infty$ represents a translating soliton which may possibly be incomplete. 
\begin{proof}
Since we have the bounds on the graphical speed $u_t= K\la -\nu,e_{n+1}\ra ^{-1}$ from Proposition \ref{prop-graphspeeds}, we can apply Proposition 4.3 and Theorem 4.4 in \cite{CCD2} as {they are} applied in Corollary 4.5 in \cite{CCD2} and obtain the following:  for any given $\Omega' \subset\subset \Omega$, there is $t_0<0$ and $C>0$ such that 
\[|Du^-|, \, \lambda_{\min}^{-1},\, \lambda_{\max} \le C\quad \text{on } \Omega'\times(-\infty,t_0].\] 

The equation of graphical GCF becomes uniformly parabolic provided we have the gradient bound, positive upper and lower curvature bounds. Thus we may pass to a limit  $u^-_i$, $i\to \infty$, by the standard regularity theory of parabolic equations, obtaining a graphical eternal solution $u_\infty(x,t)$. In view of (i) in Lemma \ref{lem-harnack}, $\p_t u_\infty(x,t)$ must be independent of $t$, that is  $\p_t u_\infty(x,t)=\beta_\infty(x)$. Furthermore, the fact that  $|Du_\infty (\cdot,t)|$ is bounded, 
globally in time,  on every compact set $\Omega' \subset \subset \Omega$, implies that $\beta_\infty(\cdot)$ has to be a constant.  From Lemma \ref{lem-beta}, we conclude that  $\p_t u_\infty \equiv \beta$.

\end{proof}   
\end{prop}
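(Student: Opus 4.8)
The plan is to feed the two-sided graphical speed bounds of Proposition~\ref{prop-graphspeeds} into the interior estimates of \cite{CCD2} to get uniform local-in-space, global-in-time $C^\infty$ control on $u^-$ near $t=-\infty$, extract a subsequential limit by Arzel\`a--Ascoli, and then use the Harnack monotonicity of Lemma~\ref{lem-harnack} together with the global gradient bound to show that $\partial_t$ of the limit is a spatial constant, which one identifies with $\beta$.

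First I would fix $\Omega'\subset\subset\Omega$. Proposition~\ref{prop-graphspeeds} supplies $t_0<0$ and $c>0$ with $c\le\partial_t u^-\le c^{-1}$ on $\Omega'\times(-\infty,t_0]$; since $\partial_t u^-=K\la-\nu,e_{n+1}\ra^{-1}=K\sqrt{1+|Du^-|^2}$, applying Proposition~4.3 and Theorem~4.4 of \cite{CCD2} exactly as in Corollary~4.5 of \cite{CCD2} gives, for each $\Omega''\subset\subset\Omega'$, a constant $C$ and some $t_0'\le t_0$ with $|Du^-|,\ \lambda_{\min}^{-1},\ \lambda_{\max}\le C$ on $\Omega''\times(-\infty,t_0']$. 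These are uniform in time: the estimates are local in time (at time $s$ they use only the solution on a fixed preceding interval, which is always available on a backward-infinite interval) and their only input, the speed bound, is uniform. With a gradient bound and two-sided bounds on $D^2u^-$ the graphical GCF $\partial_t u^-=\det D^2u^-\,(1+|Du^-|^2)^{-\frac{n+1}{2}}$ is uniformly parabolic with smooth structure, so Krylov--Safonov, Evans--Krylov, and Schauder bootstrapping upgrade these to uniform $C^\infty$ bounds on $\Omega''\times(-\infty,t_0']$.

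Next I would record that the normalization keeps $u^-_i$ locally bounded. Since $u^-(\cdot,s)$ is convex with minimum $h^-(s)$ attained at the tip projection $q_s\in\mathrm{cl}(\Omega)$, convexity gives $0\le u^-(x,s)-h^-(s)\le\la Du^-(x,s),x-q_s\ra\le C\diam\Omega$ for $x\in\Omega''$ and $s\le t_0'$; combined with the elementary bound $h^-(t+\tau_i)-h^-(\tau_i)\le u^-_i(x,t)\le C\diam\Omega+\big(h^-(t+\tau_i)-h^-(\tau_i)\big)$ and the fact that $h^-(t+\tau_i)-h^-(\tau_i)=\int_0^t\partial_s h^-(s+\tau_i)\,ds\to\beta t$ (by \eqref{eq-1} and dominated convergence), this shows $u^-_i$ is bounded on $\Omega''\times[a,b]$ uniformly in $i$. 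Translating the $C^\infty$ bounds of the previous step by $\tau_i\to-\infty$ (so $t_0'-\tau_i\to+\infty$) and running a diagonal argument over an exhaustion $\Omega''\uparrow\Omega$, $[a,b]\uparrow\mathbb{R}$, I obtain a subsequence with $u^-_i\to u_\infty$ in $C^\infty_{loc}(\Omega\times\mathbb{R})$, and $u_\infty$ solves the graphical GCF for all $t\in\mathbb{R}$.

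Finally, for the rigidity of $u_\infty$: by Lemma~\ref{lem-harnack}(i), $\partial^2_{tt}u^-\ge0$, so for fixed $x\in\Omega$ the map $s\mapsto\partial_t u^-(x,s)$ is nondecreasing and bounded, hence $\partial_t u^-_i(x,t)=\partial_t u^-(x,t+\tau_i)\to\lim_{s\to-\infty}\partial_t u^-(x,s)=:\beta(x)$, giving $\partial_t u_\infty(x,t)=\beta(x)$, independent of $t$. Then $u_\infty(x,t)=u_\infty(x,0)+\beta(x)t$, and since $|Du_\infty(\cdot,t)|$ stays bounded on $\Omega'\subset\subset\Omega$ for all $t\in\mathbb{R}$, we must have $D\beta\equiv0$, i.e.\ $\beta(\cdot)\equiv\beta_\infty$ is constant. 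To see $\beta_\infty=\beta$ of \eqref{eq-1}: for $g(s):=u^-(x,s)-h^-(s)\ge0$ one has $g'(s)=\partial_t u^-(x,s)-\partial_s h^-(s)=\partial_t u^-(x,s)-K(-e_{n+1},s)$, both terms monotone and bounded by Lemma~\ref{lem-harnack}, so $g'(s)\to\beta_\infty-\beta$; since $g$ is bounded and nonnegative this forces $\beta_\infty=\beta$. Hence $\partial_t u_\infty\equiv\beta$, and plugging into the graphical GCF yields $\det D^2u_\infty(\cdot,t)=\beta\,(1+|Du_\infty(\cdot,t)|^2)^{\frac{n+1}{2}}$, so each time slice is a translating soliton of speed $\beta$ over $\Omega$; as $u_\infty$ need not blow up along $\partial\Omega$, the associated hypersurface may be incomplete. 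The main obstacle is the first step — producing the uniform interior curvature and gradient estimates — which is precisely where Proposition~\ref{prop-graphspeeds} and the machinery of \cite{CCD2} carry the weight; once those are in hand, the compactness argument and the rigidity of $\partial_t u_\infty$ are soft.
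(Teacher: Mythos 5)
Your proposal is correct and follows essentially the same route as the paper: the speed bounds of Proposition~\ref{prop-graphspeeds} feed into Proposition~4.3 and Theorem~4.4 of \cite{CCD2} to give uniform gradient and two-sided curvature bounds, uniform parabolicity plus standard regularity yields the $C^\infty_{loc}$ limit, and Lemma~\ref{lem-harnack}(i) together with the global-in-time gradient bound forces $\partial_t u_\infty$ to be a spatial constant. Your closing argument identifying $\beta_\infty$ with $\beta$ via the bounded nonnegative function $g(s)=u^-(x,s)-h^-(s)$ is a valid and slightly more explicit rendering of the paper's terse appeal to Lemma~\ref{lem-beta}.
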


\begin{proof}[Proof of Theorem \ref{thm-asymptotic}]  By Proposition \ref{prop-asymptotic}, $u_\infty(x,t)=u_{\infty,0}(x)+\beta t $ where $u_{\infty,0}(\cdot):= u(\cdot, 0)$.  It  remains  to prove that $u_{\infty,0}(\cdot)=u_\Omega(\cdot)$. By the characterization of $u_\Omega$ given after Definition \ref{def-u_Omega},  it suffices to show $|\mathbb{R}^n -Du_{\infty,0}(\Omega)|=0$. 
Note that 
 \[ u_t \equiv \beta= (1+|Du_{\infty,0} |^2)^{\scriptscriptstyle\frac12}\left[ \fr{\det D^2u_{\infty,0}}{(1+|Du_{\infty,0}|^2)^{\scriptscriptstyle\fr{n+2}{2}}}\right]\, \text{ on }\,\Omega.\]
This implies
\be\label{eq-speedlambda'}
\ba \beta |\Omega|&= \int_{\Omega} \fr{\det D^2u_{\infty,0}}{\left({1+|Du_{\infty,0}|^2}\right)^{\scriptstyle\frac{n+2}{2}- \frac{1}{2}}}
= \int_{Du_{\infty,0}(\Omega)} \fr{1}{(\sqrt{1+|p|^2})^{\scriptstyle n+1}}\\ &\le\int_{\mathbb{R}^n} \fr{1}{(\sqrt{1+|p|^2})^{\scriptstyle n+1}} =\lambda|\Omega|.\ea\ee
We have shown $\beta\ge\lambda$ in Lemma \ref{lem-beta} and therefore the equality must hold in \eqref{eq-speedlambda'}. In particular, this shows $\beta =\lambda$ and  $|\mathbb{R}^n- Du_{\infty,0}(\Omega)| =0$.

\end{proof}
\section{Uniqueness of non-compact ancient solution} \label{sec-uniquenoncpt}

We are ready to give a proof of the uniqueness of non-compact ancient solution.  

\begin{proof}[Proof of Theorem \ref{thm-noncompact}] Let $\Sigma_t$ be an ancient solution as in the statement of the theorem.  By Theorem \ref{thm-exist1}, the solution exist for all $t\in(-\infty,\infty)$ and $\Sigma_t$ is asymptotic to $\Omega\times\mathbb{R}$ for each time slice. i.e. the domain of graphical representation of $\Sigma_t$ does not change over time. Therefore, we may represent $\Sigma_t$ by a graph $\p\{(x,x_{n+1})\in\mathbb{R}^{n+1}\,:\,x_{n+1}\ge u^-(x,t)\}$ for $(x,t)\in \Omega\times(-\infty,\infty)$. 

The main result, Theorem 1.1, in \cite{CCD2} proves the forward-in-time convergence to the soliton $u_\Omega(x)$, namely \[u^-(x,t)-h^-(t) \rightarrow u_\Omega(x) \quad \text{in } \,\,  C^\infty_{loc}(\Omega), \quad \text{ as } \, t\to\infty.\]
Let us fix an arbitrary $x'\in\Omega$. Together with Theorem \ref{thm-asymptotic},  \[\lim_{t\to\infty}u^-_t(x',t)=\lim_{t\to-\infty}u^-_t(x',t)=\lambda\] where $\lambda$ is the speed of 
the translating soliton $u_\Omega$.  By (i) Lemma \ref{lem-harnack}, $u^-_t\equiv  \lambda$ on $(x,t)\in\Omega\times \mathbb{R}$ showing  $\Sigma_t$ is a  translating soliton with the speed $\lambda$. We may repeat the same argument in \eqref{eq-speedlambda'}, while $\beta$ replaced by $\lambda$, to conclude $|\mathbb{R}^n-Du^-(\cdot,t)(\Omega)| =0$ and hence $u^-(x,t) = u_\Omega(x)+\lambda t + C$,  for some constant $C$.

\end{proof}

%%%%%%%%%%%%%%%%%%%%%%%%%%%%%%%%
%%%%%%%%%%%%%%%%%%%%%%%%%%%%%%%%

\section{Existence of compact ancient solution}\label{sec-existence}

Let $\Omega \subset \R^n$ is a bounded convex open domain. In this section we will construct an ancient compact solution of GCF which 
has asymptotic cylinder  $\Omega \times \R$, as $t \to -\infty$;  (see Definition \ref{def-asymptcy}).

We recall that $u_\Omega$ denotes the translator associated with the domain $\Omega$ satisfying $\inf_{x\in\Omega} u_\Omega(x)=0$.  For the construction of compact ancient solutions we need to show that  the volume under the translating soliton $V_\Omega:= \int_\Omega u_\Omega(x) \, dx$ is finite. Although  this is  expected to hold  for any compact convex  domain $\Omega$ with no further regularity assumptions on $\partial \Omega$, we could show this under $C^{1,1}$ boundary condition.
%Note that in  the special case $n=2$,  we do not need to impose this   extra  regularity assumption. More precisely, we will show the following.     

%For such $\Omega\subset\mathbb{R}^n$, we show there exists a compact ancient solution $\Gamma_t$, $t\in (- \infty, T)$ of the Gauss curvature flow which has asymptotic cylinder $\Omega \times \R$ and which becomes extinct at time $T:= -\frac{2V_{\Omega}}{\omega_n}$. 

\begin{lemma} \label{lem-finitevol} Assume that  $\Omega \subset \R^n$ be a convex bounded open domain with  $C^{1,1}$ boundary. Let $x_{n+1}=u_\Omega(x)$, $x\in \Omega$, be the  translating soliton associated with the domain  $\Omega$ 
and having  $\inf _{x\in \Omega} u_\Omega(x) =0$.  Then, the volume under the translating soliton is finite, i.e. we have 
 \be\label{eqn-volume1}
 V_{\Omega}:=\int_{\Omega}u_\Omega(x)dx < \infty.\ee 
\smallskip 

\begin{remark} If this lemma is shown without $C^{1,1}$ assumption, it also proves Theorem \ref{thm-ancientexistence} and Theorem \ref{thm-compactunique} without $C^{1,1}$ assumption on $\p\Omega$. 
\end{remark} 
%
%
%{\color{red} \begin{remark}[$C^{1,1}$ regularity assumption in Theorem \ref{thm-ancientexistence}]  The assumption that $\partial \Omega \in C^{1,1}$ in Theorem \ref{thm-ancientexistence} in only used in the proof of  Lemma \ref{lem-finitevol}. One expects that both Lemma  \ref{lem-finitevol}  and Theorem \ref{thm-ancientexistence} hold for any weak solution without this extra regularity assumption. In fact the authors have shown that  this is true in dimension $n=2$  but they have chosen to omit
%its proof here for the simplicity of the exposition. 
% The proof of Lemma \ref{lem-finitevol} without
%the $C^{1,1}$ regularity assumption in dimensions $n \geq 3$ is still an open question. 
%
%\end{remark} }
%

\begin{proof}[Proof of Lemma \ref{lem-finitevol}] The basic strategy is to find a supersolution of the graphical translating soliton equation \[ \dfrac{\det D^2\phi}{(1+|D\phi|^2)^{\scriptscriptstyle\frac{n+1}{2}}} \le \lambda = \dfrac{\omega_n}{2|\Omega|}\] which is integrable near the boundary of $\Omega$.

Assume first that $\p \Omega$ is smooth. Before going into the  details, let us recall some properties of the distance function   $d(x)$ from any point $x\in \Omega$ to $\p \Omega$. The function $d(x)$  is  well defined   on $\{ y\in \Omega\,:\, d(y,\p\Omega)<\lambda_0\}$,   $\lambda_0 := \sup_{y\in\p\Omega} \lambda_{\max}(y)$, where at each $y \in \Omega$,  $\lambda_{\max} (y):= \max_{i=1,\cdots n-1}  \lambda_i(y)$   denotes the maximum of the principal curvatures  of $\partial \Omega$ at $y$.  
Furthermore, $d(x)$ is a  smooth   function in this tubular neighborhood. 

\smallskip 
Let  $x\in\Omega$ be a point in this neighborhood and  $\pi(x)\in\p\Omega$ be the   point such that  $|\pi(x)-x|=\dist(x,\p\Omega)$.  If we denote  by $\lambda_i$, $i=1, \ldots, n-1$  the principal curvatures of  the hypersurface $\p\Omega\subset \mathbb{R}^n$ at $\pi(x)$, then with respect to the 
orthonormal basis $\{(e_i)_{i=1}^{n-1},-\frac{\pi(x)-x}{|\pi(x)-x|}\}$ of $\R^n$,  we have \[ D d(x) = (0,\ldots,0,1)\] and\[ D ^2d(x) =\begin{bmatrix}\begin{array}{c|c}
\begin{bmatrix} \text{diag}\left(-\frac{\lambda_i}{1-\lambda_i d}\right)\end{bmatrix}_{n-1\times n-1} & \begin{bmatrix} 0 \end{bmatrix}_{n-1\times 1} \\
\hline
\begin{bmatrix} 0\end{bmatrix}_{1\times n-1} & 0
\end{array}   
   \end{bmatrix}.\]%In the case when $\p\Omega$ satisfies interior sphere condition with a uniform radius $r_0$, then $D d$ is the same on $\{ x\in \Omega \,:\,  d(x) < r_0\}$ and  

\noindent  Define   $\phi(x) = -L \, \log d(x) $ as our   test function. Then   in this neighborhood we have \be\begin{split}
 \label{eq-subsol}  \fr{\det D^2\phi}{(1+|D\phi|^2)^{\scriptscriptstyle\frac{n+1}{2}}} &= \frac{1}{(1+\frac{L^2}{d^2})^{\scriptscriptstyle\frac{n+1}{2}}} \frac{L^n}{d^{n+1}} \, \Pi_{n=1}^{n-1}\frac{ \lambda_i(\pi (x)) }{1- \lambda(\pi(x))\, d(x)} \\&\le  \frac1L \, \Pi_{n=1}^{n-1}\frac{ \lambda_i(\pi (x)) }{1- \lambda_i(\pi(x))\, d(x)}. 
 \end{split}\ee

\smallskip 

Assume next   that $\p \Omega$ is in $C^{1,1}$ and take  a strictly monotone increasing sequence  $\{\Omega_m\}$  of   convex domains  which approximates $\Omega$ from the inside in such a way that each $\p \Omega_m$ is  smooth and $\sup_{y\in \p \Omega_m} \lambda_{\max} (y) <2\lambda_0$. Here  strictly monotone  means $\Omega_m \subset\subset \Omega_{m+1}$. We may also assume that $\dist (x, \p\Omega) < \frac{1}{4\lambda_0}$ for all $x\in \p \Omega_1$. 
Set   $M:= \sup_{x\in \Omega_1} u(x)>0$ and define the functions 
$$\phi_m(x)= -\frac{(4\lambda_0)^{n-1}}{\lambda}\log \frac{(\dist(x, \p \Omega_m))}{\diam \Omega} +M, \qquad \mbox{for } \,\, x\in \Omega_m\setminus \Omega_1.$$ Then,  by our choice of $\Omega_m$, each $\phi_m$ is smooth in  the interior of $\Omega_m \setminus \Omega_1$. 
Furthermore, from \eqref{eq-subsol}   we have 
\[ \fr{\det D^2\phi_m}{(1+|D\phi_m|^2)^{\scriptscriptstyle\frac{n+1}{2}}} \le \frac{\lambda}{(4\lambda_0)^{n-1}} \left ( \frac{2\lambda_0}{1- \frac{2\lambda_0}{4\lambda_0}}\right)^{n-1} \le \lambda .\] 
 
\smallskip  
  
We will next  compare $\phi_n$ with $u_\Omega$ to conclude that $V_\Omega < \infty$. Since $\phi_n \ge u $ on $\p \Omega_1$ and  it becomes  infinite on $\p \Omega_m$, the comparison principle implies $\phi_m \ge u $ in  the interior of $\Omega_m \setminus \Omega_1$. Note that $\phi_m$ converges locally uniformly to $\phi:=  -\frac{(4\lambda_0)^{n-1}}{\lambda}\log (\dist(x, \p \Omega)) +M $ on $\Omega\setminus \Omega_1$, which implies $\phi(x) \ge u(x)$ in this region.  Since $\phi$ is integrable on $\Omega\setminus \Omega_1$, this implies $\int_\Omega u=V_\Omega$ is finite.

\end{proof}
\end{lemma}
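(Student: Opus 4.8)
The plan is to trap $u_\Omega$ near $\partial\Omega$ beneath an explicit integrable supersolution of the graphical translator equation, i.e. a convex function $\phi$ with $F[\phi]:=\det D^2\phi\,(1+|D\phi|^2)^{-(n+1)/2}\le\lambda$ (with $\lambda=\omega_n/(2|\Omega|)$ as in Definition~\ref{def-lambda}) that is integrable near the boundary; since $u_\Omega$ is the Alexandrov solution of $F[u_\Omega]=\lambda$ normalized by $\inf u_\Omega=0$ (Remark~\ref{remark-soliton}), the comparison principle then forces $u_\Omega\le\phi$ near $\partial\Omega$ and finiteness of $V_\Omega$ follows. The point of the construction is that $u_\Omega$ should blow up at most logarithmically as $x\to\partial\Omega$.

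Assume first that $\partial\Omega$ is smooth. Let $d(x)=\dist(x,\partial\Omega)$, which is smooth and concave on a tubular neighborhood $N_\delta=\{d<\delta\}$, with Hessian $D^2 d=\mathrm{diag}\big(-\lambda_i/(1-\lambda_i d)\big)_{i=1}^{n-1}\oplus(0)$ in the frame adapted to the nearest-point projection, the $\lambda_i\ge 0$ being the principal curvatures of $\partial\Omega$ at the foot point. I would use $\phi=-L\log d+M$ as a barrier: one computes $|D\phi|=L/d$ and $\det D^2\phi=(L^n/d^{n+1})\prod_{i=1}^{n-1}\lambda_i/(1-\lambda_i d)$, hence $F[\phi]\le L^{-1}\prod_{i=1}^{n-1}\lambda_i/(1-\lambda_i d)$. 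Shrinking $\delta$ so that $1-\lambda_i d\ge 1/2$ on $N_\delta$ and taking $L$ large, depending only on $n$, $\lambda$ and $\lambda_0:=\sup_{\partial\Omega}\max_i\lambda_i$, gives $F[\phi]<\lambda$ on $N_\delta$ with $\phi$ convex there. Choosing $M$ with $\phi\ge u_\Omega$ on $\{d=\delta\}$ and using that $\phi\to+\infty$ on $\partial\Omega$ while $u_\Omega$ stays bounded on $\{d=\delta\}$, comparison yields $u_\Omega\le -L\log d+M$ on $N_\delta$; and $-\log d$ is integrable there since $\int_0^\delta(-\log s)\,ds<\infty$ and $\partial\Omega$ has finite $(n-1)$-measure, so $V_\Omega<\infty$.

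For $\Omega$ merely $C^{1,1}$, I would approximate: pick convex $\Omega_m$ with smooth boundary increasing strictly to $\Omega$ and with uniformly bounded principal curvatures $\sup_{\partial\Omega_m}\max_i\lambda_i<2\lambda_0$ — the existence of such $\Omega_m$ is exactly where $C^{1,1}$ is used, as a $C^{1,1}$ convex domain is one with a uniform interior-ball (hence curvature) bound. Fix a small interior domain $\Omega_1$, set $M:=\sup_{\Omega_1}u_\Omega$, and on the annulus $\Omega_m\setminus\overline{\Omega_1}$ run the same estimate with $\phi_m:=-\tfrac{(4\lambda_0)^{n-1}}{\lambda}\log\big(\dist(\cdot,\partial\Omega_m)/\diam\Omega\big)+M$, which is smooth and convex there, satisfies $F[\phi_m]\le\lambda$, and obeys $\phi_m\ge u_\Omega$ on $\partial\Omega_1$ and $\phi_m\to+\infty$ on $\partial\Omega_m$. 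Comparison gives $u_\Omega\le\phi_m$, and letting $m\to\infty$ (so $\phi_m\to\phi:=-\tfrac{(4\lambda_0)^{n-1}}{\lambda}\log(\dist(\cdot,\partial\Omega)/\diam\Omega)+M$ locally uniformly on $\Omega\setminus\overline{\Omega_1}$) yields $u_\Omega\le\phi$, with $\phi$ integrable on $\Omega\setminus\Omega_1$.

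The main obstacle I anticipate is the comparison step: $u_\Omega$ is only an Alexandrov solution, it blows up on the outer boundary of the annulus $\Omega_m\setminus\overline{\Omega_1}$, and $\phi_m$ is a supersolution rather than an exact solution, so one has to run the maximum principle carefully — using the strict inequality $F[\phi_m]<\lambda$ (arranged by taking $L$ slightly larger) together with $u_\Omega$ being a viscosity subsolution, and the boundary ordering $\phi_m\ge u_\Omega$ on $\partial\Omega_1$ plus $\phi_m=+\infty$ on $\partial\Omega_m$, to exclude an interior maximum of $u_\Omega-\phi_m$. The other genuinely $C^{1,1}$-dependent point is constructing the smooth convex approximations with the uniform curvature bound; dropping the $C^{1,1}$ hypothesis would require separately controlling how $u_\Omega$ degenerates near corners of $\partial\Omega$, which this barrier does not capture.
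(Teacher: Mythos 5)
Your proposal is correct and follows essentially the same route as the paper: compute the Hessian of $-L\log d$ via the second fundamental form of $\partial\Omega$ to get a supersolution of the graphical translator equation, then for $C^{1,1}$ boundary replace $\Omega$ by an increasing sequence of smooth convex approximants with uniformly bounded principal curvatures, use $\phi_m=-\tfrac{(4\lambda_0)^{n-1}}{\lambda}\log(\dist(\cdot,\partial\Omega_m)/\diam\Omega)+M$ as barriers, and pass to the limit to obtain an integrable logarithmic upper bound for $u_\Omega$. The extra commentary you add about running the comparison principle carefully for Alexandrov solutions and about where $C^{1,1}$ enters is sensible but does not change the argument.
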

 
 \smallskip
 
\begin{proof}[Proof of Theorem \ref{thm-ancientexistence}]

 Recall that the speed of the translator $u_\Omega$ defined on the domain $\Omega$ is given by $\lambda = \frac{\omega_n}{2|\Omega|}$. Theorem \ref{thm-ancientexistence} is implied by two propositions below.  

\smallskip

 \begin{prop} \label{prop-existence} Let  $\Omega \subset \R^n$ be a convex bounded domain with $C^{1,1}$ boundary.  Then, there is a compact weak ancient solution $\Gamma_t$ of the Gauss curvature flow, defined on $t\in(-\infty,T)$ with $T:= -\frac{2V_{\Omega}}{\omega_n}$, such that
  \begin{enumerate}[{\em (i)}]
 \item $\mbox{\em Vol} \, (\hat \Gamma_t) = - \omega_n t - {2V_{\Omega}}$, where $V_\Omega$ is given by \eqref{eqn-volume1};
 \item $\Gamma_t$ has reflection symmetry  with respect to $x_{n+1}=0$;
 \item $\Gamma_t$ is contained in $\Omega\times \mathbb{R}$, but not in a smaller cylinder,  i.e. $\Gamma_t$ is asymptotic to $\Omega\times \mathbb{R}$;
 \item $\Gamma_t$ is smooth in the interior of $\Omega \times \mathbb{R}$,   for $t<T$ and satisfies the differential Harnack inequality.  
 \end{enumerate}
 
 \begin{proof} For our  given bounded convex domain $\Omega$, denote by  $u_\Omega$ the graph of translating soliton corresponding to the domain $\Omega$
 having speed   $\lambda := \frac{\omega_n}{2|\Omega|}$ and satisfying $\inf_{\Omega} u_\Omega(x)=0$ (see  Definition \ref{def-u_Omega}). 
To simplify the notation, from now on we will denote $u_\Omega(x)$ simply by $u(x)$. 

\smallskip
The graphs $x_{n+1} = u(x)+\lambda \, t$ and $x_{n+1}=-u(x)- \lambda \, t$, $t \in \R$,  define translating solitons, moving in opposite directions and having tips at a distance $2\lambda \, |t|$ from each other.  The basic idea here is to construct our  solution $\Gamma_t$  as  limit  of hypersufaces which  for $t \ll -1$ 
are approximated by the boundary of the  region  $\{ x_{n+1} > u(x)+\lambda \, t \} \cap \{ x_{n+1}<-u(x)- \lambda \, t\}$. 

\smallskip 
To  make this rigorous, for any $s<0$  define $\hat \Gamma_{s,0}$ to be  the convex region which is bounded between the hypersurfaces $x_{n+1} = u(x)- \lambda \, |s|$  and $x_{n+1}= - u(x)+ \lambda \, |s|$. By Lemma \ref{lem-finitevol}, we can deduce that 
\be \label{eq:volume0}\big ( \text{Vol}(\hat \Gamma_{s,0})-\omega_n|s| \big ) \uparrow  - 2 V_\Omega, \qquad \mbox{as} \,\, s\to -\infty.
\ee
Note that $\p \hat \Gamma_{\tau,0}$,  viewed  as a  one-parameter family of hypersurfaces in $\tau$, is  a weak  subsolution to the GCF.
 
\smallskip 
Let $\Gamma_{s,t}$, $t\in[0,\tau_s)$ with $\tau_s = \text{Vol}(\hat \Gamma_{s,0})/\omega_n$, be the weak solution to the GCF starting  from $\Gamma_{s,0}$. Consider the time translated solutions $\Gamma_{s,t-s}$. $t\in[s,\tau_s+s)$. For each fixed $t$, we claim that $\Gamma_{s,t-s}$, $s\le t$, are monotone decreasing as $s\to -\infty$: for $s_1<s_2<0$, by the comparison principle between $\Gamma_{s_1,t}$ and $ \Gamma_{s_1+t,0}$, $\Gamma_{s_1,s_2-s_1}$ is contained in $\Gamma_{s_2,0}$. Again by the comparison principle between $\Gamma_{s_1,s_2-s_1+\tau}$ and $\Gamma_{s_2,\tau}$, we conclude $\Gamma_{s_1,t-s_1}$ is contained in $\Gamma_{s_2,t-s_2}$. In view of (1) Lemma \ref{lem-26}, weak solutions $\Gamma_{s,t-s}$ monotonically converge to a weak solution $\Gamma_t$ as $s\to-\infty$. 
\begin{equation}\label{eq:Gamma_conv}
  \Gamma_{s,t-s}\downarrow \Gamma_t,\qquad \text{as}\,\, s\to -\infty.
\end{equation}
Since $\lim_{s\to -\infty} \text{Vol}(\hat  \Gamma_{s,t-s})=-t\, \omega_n -{2V_{\Omega}}$, $\hat  \Gamma_t$ has non empty interior for $t<-\frac{2V_{\Omega}}{\omega_n}$ and has empty interior for $t>-\frac{2V_{\Omega}}{\omega_n}$.  This {\em defines  the ancient solution } $ \Gamma_t$ for $t\in(-\infty, - \frac{2V_{\Omega}}{\omega_n})$.
 
 \smallskip 
 
 We will now see that $\Gamma_t$ satisfies properties (i)-(iv) in the statement of our theorem. Properties (i)  and 
 the refection symmetry property (ii) clearly hold by  construction.  Furthermore,  property (iv) is just a consequence of Lemma \ref{lem-harnack}.
It remains to show property (iii).  By  construction, $\hat \Gamma_t$ is contained in $\hat \Gamma_{t,0}$ which is contained  in $\Omega\times \mathbb{R}$. Hence, $ \hat\Gamma_t$ is contained in $\Omega\times \mathbb{R}$. Suppose there is a smaller $\Omega' \subset \Omega$ such that $\hat  \Gamma_t$ is contained in $\Omega' \times \mathbb{R}$ for all $t$.  Since $\hat  \Gamma_t \subset  \hat  \Gamma_{t,0}$, $$ \sup_{\hat \Gamma_t} x_{n+1}  - \inf_{\hat \Gamma_t} x_{n+1} \le \sup_{\hat \Gamma_{t,0}} x_{n+1} - \inf_{\hat \Gamma_{t,0}} x_{n+1} = 2\lambda(-t).$$
 Therefore, $\text{Vol}(\hat  \Gamma_t) \le 2\lambda(-t) |\Omega'| = \frac{|\Omega'|}{|\Omega|} (-t\omega_n) $. On the other hand, we know that  $\text{Vol}(\hat  \Gamma_t) = -t\omega_n -{2V_{\Omega}}$. If $|\Omega'|<|\Omega|$, we have a contradiction by taking $t\to -\infty$ in the above inequality. This shows there is no such smaller $\Omega'$.

 \end{proof}
 \end{prop}

We next provide some extra properties of the solution $\Gamma_t$ constructed above. Those properties will be used in the proof of our uniqueness 
Theorem \ref{thm-compactunique}  in the next section. 

\begin{prop} \label{prop-23}
The constructed ancient solution $\Gamma_t$, $t \in(-\infty,T)$, with $T:=-\frac{2V_{\Omega}}{\omega_n}$,  satisfies $\big (\sup_{ \Gamma_t} |x_{n+1} |- \lambda |t| \, \big ) \uparrow 0 $   as $t\to -\infty$.

\begin{proof} The Harnack inequality implies that  the speed of each  tip of $\Gamma_t$  is  greater than $\lambda$,  hence the quantity   $\sup_{ \Gamma_t} |x_{n+1} |- \lambda |t| $ decreases, as $t$ increases. In addition, the solution  $\hat \Gamma_t$ is contained in $\hat \Gamma_{t,0}$, by construction, where $\hat \Gamma_{t,0}$ is given in the proof of Proposition \ref{prop-existence}. Hence $\sup_{ \Gamma_t} |x_{n+1} | \le  \lambda\, |t| $,  i.e. $\lim_{t\to-\infty} (\sup_{ \Gamma_t} |x_{n+1} |- \lambda |t| ) :=L \le 0$. If  $L<0$, this implies  that  the solution $\hat \Gamma_t$ is  contained in $\Omega \times [-(L+\lambda |t|), L+\lambda|t|]$. By the convergence of solution to the translating soliton around the tips, we  have  that \be \label{eq-1dr}\limsup_{t\to -\infty}\text{Vol}(\hat \Gamma_t)-   2[(L+\lambda |t|)|\Omega| - V_\Omega] \le 0.\ee Since   $2[(L+\lambda |t|)|\Omega| - V_\Omega]=  \omega_n|t| +2L|\Omega|-2V_\Omega$, this contradicts Proposition \ref{prop-existence} (i). This proves the assertion. \end{proof}
\end{prop}

\end{proof}

\section{Uniqueness of compact ancient solution} \label{sec-noncompact}

Given a bounded domain $\Omega \in \R^{n}$ we recall that $u_\Omega$ denotes the translator associated with the domain $\Omega$ satisfying $\inf_{x\in \Omega} u_\Omega(x)=0$.  Let us recall that if $\Omega$ is $C^{1,1}$ $$V_\Omega:= \int_\Omega u_\Omega(x) \, dx<\infty $$
by Lemma \ref{lem-finitevol}. We have shown in the previous section that there exists a compact ancient solution $\Gamma_t$, $t\in (- \infty, -\frac{2V_{\Omega}}{\omega_n})$ which is asymptotic to the cylinder $\Omega \times \R$ and which becomes extinct at time $T:= -\frac{2V_{\Omega}}{\omega_n}$.
We will next show that $\Gamma_t$ is unique up to translations in space along the axis $e^{n+1}$ and translations in time.

\smallskip

Let us briefly {\em outline the proof}  of the Theorem which will be given below.  As we stated in  Theorem \ref{thm-compactunique}, our
goal  is to show that  any  given compact ancient solution $\Sigma_t$ asymptotic to $\Omega\times \mathbb{R}$ which 
becomes extinct  at time $t=-\frac{2 V_\Omega}{\omega_n}$ is same as $\Gamma_t$, the solution constructed 
in the previous section, up to a translation in $e_{n+1}$ direction. For now, let us set aside to deal with this translation. The main step in our proof is to show the inclusion $$\hat \Gamma_t \subset \hat \Sigma_t, \qquad \mbox{for all} \,\, t <T.$$
Recall that   $\Gamma_t$ was obtained as the limit of $\Gamma_{s,t-s}$ as $s\to-\infty$,  where $\Gamma_{s,\tau}$ is the GCF running from $\Gamma_{s,0}$ and  $\Gamma_{s,0}$ is the compact surface obtained from the  gluing of two translators so that the distances from each tip to the origin is equal to $|s|\, \lambda$. Thus, it would have been   sufficient to  show that $\hat \Gamma_{s,0} \subset \hat \Sigma_s$,  for all $s\ll -1$. 
However, this is unlikely  to hold in general. Instead, it suffices to find a  family of convex sets  $\hat K_s \subset \hat \Sigma_s$ satisfying  $\hat K_s \subset \hat \Gamma_{s,0}$ and $\text{Vol}(\hat \Gamma_{s,0}\setminus \hat K_s) \to 0$,  as $s\to -\infty$. If $\hat K_{s,\tau}$ is the GCF from $K_{s}$, then $\hat K_{s,t-s} \subset \hat \Sigma_t$ for all $s\ll -1$.  Meanwhile, $\hat K_{s,t-s} \subset \hat \Gamma_{s,t-s}$ and $\text{Vol}(\hat K_{s,t-s})-\text{Vol}(\hat \Gamma_{s,t-s}) = \text{Vol}(\hat K_{s,0})-\text{Vol}(\hat \Gamma_{s,0}) \to 0$ as $s\to-\infty$, showing that $\hat K_{s,t-s} \to \hat \Gamma_{t}$ as $s\to -\infty$. This proves $\hat \Gamma_{t} \subset \hat \Sigma_t$. In this argument, we used the following two properties in a strong way:
 \begin{enumerate}[(i)]
\item  $\p_t (\text{Vol} ( \hat \Sigma_t))= -\omega_n$, for any GCF solution  $\Sigma_t= \p \hat \Sigma_t$, and
\item if two convex sets $M_1, M_2$ satisfy $\hat M_1 \subset \hat M_2$  and $\text{Vol}(\hat M_1) = \text{Vol}(\hat M_2)$, then $\hat M_1=\hat M_2$.
\end{enumerate} 

\smallskip 
Let us  next  describe  how we find such a family $\hat K_s$.  Instead of the translator $u_\Omega$  in the domain $\Omega$, 
we will consider  a hypersurface $x_{n+1}=u_\e(x)$ on $(1+\e)^{-\fr 1n} \Omega$ which is the translator of the same speed $\lambda$ on 
the domain $ (1+\e)^{-\fr 1n} \Omega \setminus B_\e(0)$ (see in Lemmas \ref{lem-32} and \ref{lem-33} below).  When the domain shrinks from
$\Omega$ to $ (1+\e)^{-\fr 1n} \Omega $, the associated translator speed larger that $\lambda$,  but we can adjust the speed to 
be equal to $\lambda$  by subtracting a small ball $B_\e(0)$ from  $(1+\e)^{-\fr 1n} \Omega$. lf we glue two  such hypersurfaces at  distance $|s|\, \lambda$, then the convergence of tip regions to the translator and the comparison principle from $-\infty$ time imply that $\Sigma_{s}$ contains such a hypersurface as 
$s \ll -1$ (see in Lemma \ref{lem-34}).   Let $\hat K^{\e_s}_{s}$ be the best possible  (meaning the smallest $\e_s$) convex set which can be inserted in 
$\hat \Sigma_s$ by the argument above. We want $\text{Vol}(\hat \Gamma_{s,0} \setminus \hat K^{\e_s}_s) \to 0$.  Roughly, $$\text{Vol}
\big (\hat \Gamma_{s,0} \big )\approx \text{Vol} \big (\Omega\times [-|s|\lambda,|s|\lambda] \big ) - 2V_\Omega= 2\lambda|s|\text{Vol}
\big (\Omega \big )- 2V_\Omega$$ and $$\ba\text {Vol} \big (\hat K^{\e_s}_{s} \big )&\approx \text{Vol} \big ((1+\e_s)^{-\fr1n}\Omega\times [-|s|\lambda,|s|\lambda] \big ) - 2V_{\e_s}\\&= 2\lambda|s| \text{Vol} \big ((1+\e_s)^{-\fr1n}\Omega \big )- 2V_{\e_s}.\ea$$
 Here, $V_\e$ denotes  the volume under the surface $x_{n+1}=u_\e(x)$ and it converges to $V_\Omega$,  as $\e\to 0$ (see in Lemma \ref{lem-32}). Since $\text{Vol}((1+\e_s)^{-\fr1n}\Omega )\approx (1-  \e_s) \text{Vol}( \Omega)$ for small $\e_s$, we need  $\e_s = o(|s|^{-1})$  to approximate the volume of $\hat \Gamma_{s,0}$ by $K^{\e_s}_s$ as $s\to-\infty$. A stronger statement of this assertion will be shown in Proposition \ref{prop-723}.  \smallskip
\smallskip

We will now give  the detailed proof of Theorem \ref{thm-compactunique}. Without loss of generality   we may assume that $0\in \Omega$ and
that $\inf  u_{\Omega}(\cdot )=u_{\Omega}(0)=0$. Let us fix   $r_0>0, R_0 >0$ such that $B_{2r_0}(0) \subset \Omega \subset B_{R_0}(0)$.  We begin with  a few preliminary results, where 
$\eta$ denotes a standard  cut off function  supported in $B_1\subset \R^n$ such that $\int \eta \, d x =1$.  

\begin{lemma}\label{lem-32} Let $\e_0:=\min( \tfrac{r_0}2,1)$ and $\lambda = \fr{\omega_n}{2|\Omega|}$. 
Given $\e \in(0,\e_0) $, there is a unique convex solution $u_\e:\Omega_\epsilon :=(1+\e)^{-\frac{1}{n}} \, \Omega  \to \mathbb{R}$ to the  equation 
\be\label{eqn-epsilon}
\sqrt{1+|Du_\e(x)|^2} \, K(u_\e,x) = \lambda \, \big ( 1+\e^{1+n}\eta(\e^{-1}x)\big ), 
\ee
satisfying the conditions
\bee  \inf_{\Omega_\epsilon  }  u_\e =0 \qquad \text{and} \qquad Du_\e((1+\e)^{-\frac{1}{n}}\Omega )= \mathbb{R}^n.\eee
Moreover, $V_{\e}:= \int_{(1+\e)^{-\frac{1}{n}}\Omega} u_\e(x)dx \to V_\Omega$,  as $\e\to 0$. 
\begin{proof}
The result of Urbas in \cite{Ur1,Ur2} guarantee the existence of a unique  solution of equation \eqref{eqn-epsilon} satisfying the required conditions.   
In addition,   standard regularity estimates  for equations of Monge-Amp\`ere type  imply that as $\epsilon \to 0$, $u_\epsilon(x)$ converges to the translator  $u_\Omega(x)$ 
having $\inf_\Omega u(x) =0$ and the convergence is  in the $C^{\infty}_{loc}$ sense. The convergence of $V_\e \to V_\Omega$  easily follows,  since the proof of  Lemma \ref{lem-finitevol} can be applied uniformly to the solutions $u_\e$ and gives \[\sup_{\e<\min( \tfrac{r_0}2,1)} \int_{\{x\in(1+\e)^{-\fr1n} \Omega\,:\,
 \dist (x,\p (1+\e)^{-\frac1n}\Omega) \le \delta \} }u_\e (x) dx = o(1) \text{ as }\delta \to0.\] 
 \end{proof}
\end{lemma}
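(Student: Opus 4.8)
The plan is to recognize \eqref{eqn-epsilon} as a prescribed-Gauss-curvature equation of Monge-Amp\`ere type to which Urbas's theory directly applies, and then to upgrade to the two convergence statements by standard interior estimates together with the barrier already built in the proof of Lemma~\ref{lem-finitevol}. First, using $K(u,x)=\det D^2u/(1+|Du|^2)^{(n+2)/2}$ for a convex graph $x_{n+1}=u(x)$, equation \eqref{eqn-epsilon} becomes
\[
\det D^2u_\e \;=\; \lambda\bigl(1+\e^{1+n}\eta(\e^{-1}x)\bigr)\,(1+|Du_\e|^2)^{\frac{n+1}{2}}\qquad\text{on }\Omega_\e ,
\]
which is exactly the equation solved in \cite{Ur1,Ur2}. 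Since $\e<\e_0\le r_0/2$ and $B_{2r_0}(0)\subset\Omega$, a short computation gives $B_\e(0)\subset\subset\Omega_\e$, so $\eta(\e^{-1}\cdot)$ is supported well inside $\Omega_\e$; one then checks that the total mass of the right-hand coefficient over $\Omega_\e$ equals $\int_{\R^n}(1+|p|^2)^{-(n+1)/2}\,dp=\tfrac{\omega_n}{2}$ (this is precisely the role of the dilation factor $(1+\e)^{-1/n}$ together with the bump amplitude, and it is why the bump is inserted at all: passing from $\Omega$ to the smaller $\Omega_\e$ would otherwise force the translator speed above $\lambda$). With this compatibility condition, Urbas's theorem yields a unique convex solution $u_\e$ modulo additive constants, with $Du_\e(\Omega_\e)=\R^n$ built in; the normalization $\inf_{\Omega_\e}u_\e=0$ fixes it, and since the coefficient is smooth and pinched between $\lambda$ and $2\lambda$, the Monge-Amp\`ere regularity theory makes $u_\e$ smooth in $\Omega_\e$.

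Next, for the convergence $u_\e\to u_\Omega$ as $\e\to0$: the domains $\Omega_\e$ increase to $\Omega$, and the coefficients converge uniformly to $\lambda$, the difference being bounded by $\lambda\e^{1+n}\|\eta\|_\infty$. On any $K\subset\subset\Omega$ I would first obtain uniform (in $\e$) $C^0$ bounds for $u_\e$ from convexity, $\inf u_\e=0$, and comparison with fixed translators over slightly enlarged domains; the pinching $\lambda\le(\text{coeff})\le2\lambda$ then makes the equation uniformly nondegenerate on $K$, so the Caffarelli/Pogorelov/Evans-Krylov estimates give uniform $C^\infty(K)$ bounds. Hence along any sequence $\e_j\downarrow0$ a subsequence converges in $C^\infty_{loc}(\Omega)$ to a convex $u_\infty$ with $\inf u_\infty=0$ solving the unperturbed translator equation $\det D^2u_\infty=\lambda(1+|Du_\infty|^2)^{(n+1)/2}$ on $\Omega$ in the Alexandrov sense. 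The change of variables $p=Du_\infty(x)$ then gives $\lambda|\Omega|=\int_{Du_\infty(\Omega)}(1+|p|^2)^{-(n+1)/2}\,dp\le\tfrac{\omega_n}{2}=\lambda|\Omega|$, forcing equality and hence $|\R^n-Du_\infty(\Omega)|=0$; by the characterization recalled in Remark~\ref{remark-soliton}, $u_\infty=u_\Omega$. Since the limit is independent of the sequence, the whole family converges, $u_\e\to u_\Omega$ in $C^\infty_{loc}(\Omega)$.

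Finally, for $V_\e\to V_\Omega$ — and this is where I expect the real work to be. On each fixed $K\subset\subset\Omega$ the local smooth convergence immediately gives $\int_K u_\e\to\int_K u_\Omega$, so the issue is to control the boundary layer $\Omega_\e\setminus K$ uniformly in $\e$. The key point is that the barrier construction in the proof of Lemma~\ref{lem-finitevol} runs verbatim for every $u_\e$: near $\partial\Omega_\e$ the bump $\eta(\e^{-1}\cdot)$ vanishes, so there $u_\e$ solves the exact speed-$\lambda$ translator equation and in particular is a subsolution of $\det D^2\phi\le\lambda(1+|D\phi|^2)^{(n+1)/2}$; moreover $\partial\Omega_\e$ is a dilate of the $C^{1,1}$ hypersurface $\partial\Omega$, so its principal curvatures are bounded uniformly in $\e$; and the inner matching value needed to start the comparison is supplied by the uniform $C^0$ bound of the previous step. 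This produces a single logarithmic super-barrier $\phi$, defined and integrable in a fixed one-sided neighborhood of $\partial\Omega$ and independent of $\e$, with $u_\e\le\phi$ there, so $\int_{\Omega_\e\setminus K}u_\e\to0$ uniformly as $|\Omega\setminus K|\to0$; combined with the interior convergence this yields $V_\e\to V_\Omega$. The two delicate points are thus (a) pinning down the mass identity so that Urbas's theorem applies cleanly, and (b) making the Lemma~\ref{lem-finitevol} barrier argument uniform in $\e$; the remainder is the standard Monge-Amp\`ere compactness machinery.
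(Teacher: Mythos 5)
Your proposal takes essentially the same route as the paper's (very terse) proof: invoke Urbas for existence and uniqueness of $u_\e$, use Monge--Amp\`ere interior compactness together with the characterization of $u_\Omega$ recalled in Remark~\ref{remark-soliton} to get $u_\e\to u_\Omega$ in $C^\infty_{loc}$, and apply the barrier construction from the proof of Lemma~\ref{lem-finitevol} uniformly in $\e$ to control the boundary layer and conclude $V_\e\to V_\Omega$. Your elaboration of the last step (interior convergence on a compact $K$ plus a single $\e$-independent logarithmic super-barrier near $\p\Omega$) is exactly what the paper means by ``the proof of Lemma~\ref{lem-finitevol} can be applied uniformly.''

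The one place where you go beyond the paper is the assertion that the mass-balance identity checks out exactly, and here there is a problem with the formula as stated. Computing directly,
\[
\int_{\Omega_\e}\lambda\bigl(1+\e^{1+n}\eta(\e^{-1}x)\bigr)\,dx
=\lambda\,(1+\e)^{-1}|\Omega|+\lambda\,\e^{1+2n},
\]
since $|\Omega_\e|=(1+\e)^{-1}|\Omega|$ and $\int\eta(\e^{-1}x)\,dx=\e^n$. For small $\e$ this is $\lambda|\Omega|-\lambda|\Omega|\e+O(\e^{2})$, which falls short of $\tfrac{\omega_n}{2}=\lambda|\Omega|$ by roughly $\lambda|\Omega|\e$, not zero. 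Since the requirement $Du_\e(\Omega_\e)=\R^n$ forces $\int_{\Omega_\e}(\text{RHS coefficient})=\tfrac{\omega_n}{2}$ via the Monge--Amp\`ere change of variables, this balance is a genuine necessary condition, and with the exponents $(1+\e)^{-1/n}$ and $\e^{1+n}$ written in the paper it simply does not hold. This appears to be a typo in the paper's choice of contraction factor and/or bump amplitude; the construction outlined before Lemma~\ref{lem-32} makes clear that the intent is to have $\lambda|\Omega_\e|+\int(\text{bump})=\tfrac{\omega_n}{2}$ exactly, and one can repair it, for instance, by shrinking $\Omega$ less aggressively so that $|\Omega|-|\Omega_\e|=\e^{1+2n}$. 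Rather than asserting the balance ``checks'' with the given formula, you should flag the discrepancy and note the needed correction; after that adjustment the remainder of your argument runs as written and coincides with the paper's.
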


Since $\Sigma_t$ converges to the translating soliton near tip regions, there is $\tau_0 \ll  -1$ and $M>0$ such that 
\be \label{eq-ineqd}|Du^+(x,t)|, \, |Du^-(x,t)| \le M, \qquad \text{ on }B_{\e_0}(0) \times (-\infty, \tau_0].\ee 
In particular, this implies $|Du_\Omega(x)| \le M$ on $x\in B_{\e_0}(0)$. We will use $\tau_0$ and $M$ in the remaining of this section. Also recall that we 
have assumed $u_{\Omega}(0)=\inf u_{\Omega}(\cdot)=0$ in this section.

\begin{lemma} \label{lem-33} For $\e \in(0, \e_0)$, $u_\epsilon$ defined in Lemma \ref{lem-32} satisfies
\[u_\e(x)+ M \e \ge u_\Omega(x) \quad \text{for all } x\in (1+\e)^{-\frac{1}{n} } \Omega.\]
\begin{proof}
Note $u_\e(x)+M \e \ge M \e  \ge u(x)$ on $ B_\e$ and it becomes infinity  at  $\p (1+\e)^{-\fr1n} \Omega$. They are both solutions to the translating soliton equation of  speed  $\lambda$ in the  domain $ (1+\e)^{-\fr1n} \Omega \setminus B_\e$.
Hence, the comparison principle implies the lemma. \end{proof} 
\end{lemma}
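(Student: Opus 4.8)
The plan is to compare the two convex functions $v:=u_\e+M\e$ and $u_\Omega$ on the annular domain $A_\e:=\Omega_\e\setminus\overline{B_\e(0)}$, where $\Omega_\e=(1+\e)^{-1/n}\Omega$, and then patch in the trivial estimate on $\overline{B_\e(0)}$. First I would observe that, since the cut-off $\eta$ is supported in $B_1\subset\R^n$, the term $\e^{1+n}\eta(\e^{-1}x)$ in \eqref{eqn-epsilon} vanishes identically on $A_\e$; hence on $A_\e$ the function $u_\e$ solves the speed-$\lambda$ translating soliton equation $\sqrt{1+|Du_\e|^2}\,K(u_\e,x)=\lambda$, equivalently $\det D^2u_\e=\lambda\,(1+|Du_\e|^2)^{\frac{n+1}{2}}$, which is also satisfied by $u_\Omega$ on all of $\Omega\supset A_\e$. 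Since adding the constant $M\e$ changes neither $Du_\e$ nor $D^2u_\e$, the functions $v$ and $u_\Omega$ are two convex (Alexandrov) solutions of one and the same equation on $A_\e$.

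Next I would check the ordering $v\ge u_\Omega$ on the two pieces of $\partial A_\e$. On the inner boundary $\partial B_\e(0)$: from $\inf_{\Omega_\e}u_\e=0$ we get $v\ge M\e$ on $\overline{B_\e(0)}$, while from the gradient bound $|Du_\Omega|\le M$ on $B_{\e_0}(0)\supset\overline{B_\e(0)}$ recorded after \eqref{eq-ineqd}, together with the normalization $u_\Omega(0)=\inf_\Omega u_\Omega=0$, we get $u_\Omega\le M\e$ on $\overline{B_\e(0)}$; hence $v\ge u_\Omega$ on all of $\overline{B_\e(0)}$, in particular on $\partial B_\e(0)$. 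On the outer boundary $\partial\Omega_\e$: the property $Du_\e(\Omega_\e)=\R^n$ means the graph of $u_\e$ is asymptotically vertical along $\partial\Omega_\e$, so $v=u_\e+M\e\to+\infty$ there, whereas $u_\Omega$ remains bounded on the compactly contained set $\overline{\Omega_\e}\subset\subset\Omega$; hence $v\ge u_\Omega$ near $\partial\Omega_\e$ as well.

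I would then conclude with the comparison principle. On the open set $U:=\{x\in A_\e : v(x)<u_\Omega(x)\}$ the previous step shows that $U$ stays away from both $\partial B_\e(0)$ and $\partial\Omega_\e$, so $\overline U\subset\subset A_\e$ and $v=u_\Omega$ on $\partial U$; since on $U$ both $v$ and $u_\Omega$ are convex and satisfy $\det D^2w=\lambda\,(1+|Dw|^2)^{\frac{n+1}{2}}$, the comparison/uniqueness principle for this Monge--Amp\`ere-type equation (equivalently, the geometric comparison principle of Lemma \ref{lem-26}(ii) applied to the two graphs regarded as translating solitons of speed $\lambda$) forces $U=\emptyset$, i.e.\ $v\ge u_\Omega$ on $A_\e$. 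Combined with $v\ge u_\Omega$ on $\overline{B_\e(0)}$ this gives $u_\e+M\e\ge u_\Omega$ throughout $\Omega_\e$, which is the claim.

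The step I expect to require the most care is the behaviour of $u_\e$ at the outer boundary $\partial\Omega_\e$, and the precise sense in which the comparison principle is invoked: the equation is degenerate elliptic with a gradient-dependent right-hand side, and (cf.\ Remark \ref{remark-soliton}) over any flat portions of $\partial\Omega$ the function $u_\e$ need not actually tend to $+\infty$, so the inequality $v\ge u_\Omega$ up to that boundary has to be justified from Urbas's construction in \cite{Ur1,Ur2} (or by approximating $\Omega$ from inside by strictly convex domains, as done elsewhere in the paper) rather than from naive boundary values. Everything else --- the vanishing of the cut-off on $A_\e$, the invariance of the equation under adding the constant $M\e$, and the inner-boundary estimate --- is elementary.
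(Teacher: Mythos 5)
Your proof follows the paper's argument step for step: observe that $\eta(\e^{-1}x)$ is supported in $B_\e(0)$ so $u_\e$ solves the same speed-$\lambda$ translator equation as $u_\Omega$ on the annulus $\Omega_\e\setminus\overline{B_\e(0)}$; control the inner boundary via $u_\e\ge 0$ and the gradient bound $|Du_\Omega|\le M$ (giving $u_\Omega\le M\e$ on $\overline{B_\e(0)}$); control the outer boundary via $u_\e\to+\infty$; then invoke the comparison principle. You spell out the details the paper leaves implicit, and the caveat you flag --- that on flat portions of $\p\Omega_\e$ the function $u_\e$ need not blow up, so the outer-boundary ordering requires either Urbas's generalized sense (Remark \ref{remark-soliton}) or an inner strictly convex approximation --- is a real subtlety that the paper's one-line ``it becomes infinity at $\p(1+\e)^{-1/n}\Omega$'' also glosses over. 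In short: same approach, with the correct additional observation of where extra care is needed.
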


\smallskip 

For the next lemma let us define $d(t)= \min (|u^+(0,t)|, |u^-(0,t)|)$. $u^+(0,t)$ and $u^-(0,t)$ are very similar to $h^+(t)$ and $h^-(t)$, respectively (recall Definition \ref{defn-hpm}) in the following sense: since $u^-(x,t)-h^{-}(t)$ and $u^+(x,t)-h^+(t)$ converges to $u_\Omega(x)$ and $-u_\Omega(x)$ as $t\to-\infty$, respectively,  and $\inf u_\Omega = u(0)=0$, we have $|u^-(0,t)-h^-(t)|=o(1)$ and $|u^+(0,t)-h^+(t)|=o(1)$ as $t\to-\infty$. Moreover, $\p_t u^-(0,t) \ge \lambda$ and $\p_t u^+(0,t) \le -\lambda$.
\begin{lemma} \label{lem-34} Let $\e \in(0, \e_0)$ be a fixed given number.
For a solution $\Sigma_t$ satisfying the assumptions of Theorem \ref{thm-compactunique},  set  $d(t)= \min (|u^+(0,t)|, |u^-(0,t)|)$. 
Then,   the solution  $\Sigma_t$ encloses the convex body \[\hat K_{t,\e}:= \{ (x',x'_{n+1}) \in\mathbb{R}^{n+1} \, :\,  |x'_{n+1}| \le  -  u_{\e}(x')  - M\e+ d(t) \}\]
for all $t\le \min(t_\e,\tau_0)$,   where $\tau_0$ is a time satisfying \eqref{eq-ineqd} and   $$ t_\e :=\sup \{\,t\,:\,( (1+\e)^{-\fr1n} \Omega )\times \{0\}\subset  \hat \Sigma_t \cap \{x_{n+1}=0\} \,\}.$$
%,  i.e. $t_\e$ is the last time when the cross-section of $\Sigma_t$ by $\{x_{n+1}=0\}$ contains $(1+\e)^{-\fr1n} \Omega$.
\begin{proof}
We apply again  the comparison principle.  Since 
$\p_t u^-(0,t)\ge \lambda$ and $\p_t u^+(0,t) \le -\lambda$ (this is due to the convergence to the soliton shown in Theorem \ref{thm-asymptotic}  and the Harnack inequality  in Lemma \ref{lem-harnack} (i)),  
 we have  that $K_{t,\e} :=\partial \hat K_{t,\e} $ is a supersolution 
of the Gauss curvature flow except the cross-section $\Sigma_t \cap \{ x_{n+1} =0\}$ and the two tip regions which are components of $(B_{\e}(0) \times \mathbb{R} )\cap K_{t,\e}$.  From the choice of $\tau_0$ and $d(t)$, we have $u_\e (x) + M \e -d(t) \ge u^-(x,t) $ and $-u_\e (x)-M \e +d(t) \le u^+(x,t)$ on $(x,t)\in B_\e \times (-\infty,\tau_0]$. Thus if $t\le \min(t_\e,\tau_0)$, then $K_{t,\e}$  does not touch to $\Sigma_t$ on these three regions. 

Moreover, the convergence to the translating soliton around tips and  Lemma \ref{lem-33} imply $\Sigma_t$ contains $K_{t,\e}$ for large negative times. By the comparison principle, we have $\hat K_{t,\e} \subset\hat \Sigma_t$ for $t< \min(t_\e, \tau_0)$. 
\end{proof}
\end{lemma}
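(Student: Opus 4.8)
The final statement to prove is Lemma \ref{lem-34}, which asserts that a compact ancient solution $\Sigma_t$ asymptotic to $\Omega\times\mathbb{R}$ contains the explicit convex body $\hat K_{t,\e}$ built from gluing two copies of the $\e$-perturbed translator $u_\e$ for all sufficiently negative times $t \le \min(t_\e,\tau_0)$.

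\begin{proof}[Proof proposal for Lemma \ref{lem-34}]
The plan is to run a comparison principle argument from $t=-\infty$. First I would verify that the boundary $K_{t,\e} = \partial \hat K_{t,\e}$, viewed as a one-parameter family in $t$, is a weak super-solution to the GCF away from three ``bad'' regions: the equatorial cross-section $\{x_{n+1}=0\}$ and the two tip caps, which are the components of $(B_\e(0)\times\mathbb{R})\cap \hat K_{t,\e}$. Away from these regions $K_{t,\e}$ is (up to vertical translation of its two graphical pieces) a translator of speed $\lambda$, which moves vertically with speed exactly $\lambda$; since $d(t)=\min(|u^+(0,t)|,|u^-(0,t)|)$ grows at rate at least $\lambda$ (because $\partial_t u^-(0,t)\ge\lambda$ and $\partial_t u^+(0,t)\le-\lambda$, as recorded just before the lemma), the body $\hat K_{t,\e}$ expands at least as fast vertically as a genuine $\lambda$-speed translator, hence is a super-solution in the smooth region. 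The key point here is that I only need a super-solution property to get the right one-sided comparison (the moving body stays inside $\hat\Sigma_t$ once it starts inside).

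Next I would handle the three bad regions by showing that $\hat K_{t,\e}$ simply does not touch $\Sigma_t$ there for $t\le\min(t_\e,\tau_0)$, so that the comparison principle is never obstructed. For the cross-section: by definition of $t_\e$, for $t\le t_\e$ the slab-like set $((1+\e)^{-1/n}\Omega)\times\{0\}$ is contained in $\Sigma_t\cap\{x_{n+1}=0\}$, which contains the cross-section of $\hat K_{t,\e}$ at height zero (since the domain of $u_\e$ is exactly $(1+\e)^{-1/n}\Omega$); so at height $0$ the body $\hat K_{t,\e}$ is strictly inside. For the two tip caps, lying over $B_\e(0)$: here I would use the gradient bound \eqref{eq-ineqd} and Lemma \ref{lem-33}. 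On $B_\e(0)\times(-\infty,\tau_0]$ the graph $u^-(x,t)$ differs from $h^-(t)$ by $o(1)$ and $u_\e(x)+M\e-d(t)$ is by Lemma \ref{lem-33} and the Lipschitz bound a barrier lying below $u^-(x,t)$, i.e.\ $u_\e(x)+M\e-d(t)\ge u^-(x,t)$ and symmetrically $-u_\e(x)-M\e+d(t)\le u^+(x,t)$; hence near the tips $\hat K_{t,\e}$ again sits strictly inside $\hat\Sigma_t$.

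Having established that $K_{t,\e}$ is a super-solution away from regions where it is strictly interior, the last ingredient is to start the comparison: I would invoke Theorem \ref{thm-asymptotic}, the convergence of $\Sigma_t - h^-(t)e_{n+1}$ (resp.\ $\Sigma_t-h^+(t)e_{n+1}$) to the translators $\pm u_\Omega$, together with Lemma \ref{lem-33}, to conclude that for all sufficiently negative $t$ one has $\hat K_{t,\e}\subset\hat\Sigma_t$ outright. Then the weak comparison principle for GCF (Lemma \ref{lem-26}(ii)), applied on any interval $[a,b]$ with $a$ very negative and $b\le\min(t_\e,\tau_0)$, propagates this inclusion forward and gives $\hat K_{t,\e}\subset\hat\Sigma_t$ for all $t\le\min(t_\e,\tau_0)$, as claimed.

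I expect the main obstacle to be the bookkeeping at the tip caps: one must check carefully that the barrier inequalities $u_\e(x)+M\e-d(t)\ge u^-(x,t)$ hold on all of $B_\e(0)$ (not just at $x=0$), which is where the uniform gradient bound $M$ from \eqref{eq-ineqd} together with the definition $d(t)=\min(|u^+(0,t)|,|u^-(0,t)|)$ and the $o(1)$ closeness of $u^-(0,t)$ to $h^-(t)$ all get used simultaneously; and one must confirm that the super-solution property of $K_{t,\e}$ in the smooth graphical region really only requires the one-sided tip-speed bounds $\partial_t u^\pm(0,t)\gtrless\mp\lambda$ rather than pointwise control over the whole tip. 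Everything else is a routine application of the comparison machinery already set up in Section \ref{sec-Preliminaries}.
\end{proof}
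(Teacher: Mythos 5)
Your proposal is correct and takes essentially the same route as the paper: establish the super-solution property of $K_{t,\e}$ away from the equatorial cross-section and the two tip caps, verify non-contact there for $t\le\min(t_\e,\tau_0)$ via the definition of $t_\e$ and the gradient bound \eqref{eq-ineqd}, start the comparison near $t=-\infty$ using the tip convergence and Lemma~\ref{lem-33}, and propagate forward by the weak comparison principle. Two wording slips are worth flagging, though they do not affect the mathematics: the body $\hat K_{t,\e}$ \emph{shrinks} (not expands) at least as fast as the flow demands as $t$ increases, since $\partial_t d(t)\le -\lambda$ while $K\sqrt{1+|Du_\e|^2}=\lambda$ off $B_\e$, which is precisely the super-solution inequality; and the barrier $u_\e(x)+M\e-d(t)$ lies \emph{above} (not below) $u^-(x,t)$ over $B_\e$, which is what keeps $\hat K_{t,\e}$ inside $\hat\Sigma_t$ near the lower tip.
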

A crucial ingredient  in the proof Theorem \ref{thm-compactunique} is  Proposition \ref{prop-723} which is an estimate on the rate that shows how fast $\Sigma _t$ becomes asymptotic to $\Omega\times \mathbb{R}$. We achieve this by a barrier argument. A one-parameter family of rotationally symmetric convex hypersurfaces in $\mathbb{R}^{n+1}$ represented by 
\[\Sigma_t = \{(x,x_{n+1})\in \mathbb{R}^{n+1}\, :\, |x| = r(x_{n+1},t) \}\] is a solution to the GCF if $r_{xx}<0$ and 
\begin{equation*}
r_{t} =  r_{xx}  (1+r_x^2)^{-\frac{n+1}{2}}  r^{-(n-1)}  .
\end{equation*}
Hence,   $\Sigma_t$ plays the role of an inner barrier if  
\begin{equation}\label{eq:inner.barrier}
r_{t} \leq  r_{xx}  (1+r_x^2)^{-\frac{n+1}{2}}  r^{-(n-1)}  .
\end{equation}

\begin{lemma}[Ancient entasis]\label{lem-innerbarrier}For given $\e>0$ and $L>1$, consider 1-parameter family of (incomplete) hypersurfaces $\{\Sigma^e_{t}\}_{t\le 0}$ defined by
\[\Sigma^e_t=\{ (x_1,\ldots,x_n,x_{n+1}) \,:\, |(x_1,\ldots,x_n)|=r(x_{n+1},t),\, |x_{n+1}| \le -t \},\] where $r(x,t)$ on $|x|\le -t$ is defined by  
\[ r(x,t)=  2\epsilon \left(2-  \exp {\frac{t}{L}} \cosh \frac {x}L\right ),\]
where $L = (2\e) ^{-(n-1)}$. Then $\{\Sigma^e_t\}_{t\le 0 }$  is an (incomplete) inner barrier to the GCF. Moreover, on its boundary $|x_{n+1}|=|t|$, there holds 
 \begin{equation}\label{eq:entasis_boundary}
2\e \le r(|t|,t)=r(-|t|,t)\le 3\e. 
 \end{equation}

\begin{remark}
Note that $\Sigma^e_t$ has rotationally symmetry about $x_{n+1}$-axis and has reflection symmetry about $\{x_{n+1}=0\}$.
\end{remark}
\begin{proof} Since $r_{xx}\le0$, $(1+r_x^2)\ge 1$, and $r \ge 2\e$, we have 
\[-  r_{xx}  (1+r_x^2)^{-\frac{n+1}{2}}  r^{-(n-1)}  \le - (2\e )^{-(n-1)} r_{xx} . \] Therefore, combining with $L= (2\e) ^{-(n-1)}$ yields \eqref{eq:inner.barrier}. In addition, we can obtain \eqref{eq:entasis_boundary}  by observing $\frac{1}{2} \leq e^{\frac{t}{L} } \cosh (\frac{t}{L}) \leq 1$.
\end{proof}

\end{lemma}

\bigskip

\begin{proposition}\label{prop-723}Let $\Omega$ be a bounded convex domain with $C^{1,1}$ boundary and let $\Sigma_t=\partial \hat \Sigma_t$, for $t\in(-\infty, T)$, be a compact ancient solution which is  asymptotic to $\Omega\times \mathbb{R}$. Then there exist large positive $C$, $L<\infty$ such that \[\sup_{x\in \partial \Omega\times \{0\}} d(x,\Sigma_t\cap \{x_{n+1}=0\} ) \le Ce^{t/L}.\] Here $L$ depends only on $\Omega$, and $C$ depends only on $\Omega$, $\Sigma_t$.  
\end{proposition}

\begin{proof} 
After a rescaling, we may assume $|\Omega|=\frac{\omega_n}{4}$ so that the translator asymptotic to $\Omega \times \mathbb{R}$ has the speed  $\lambda=2$. Then, by the Harnack Lemma \ref{lem-harnack} (i), we have
\begin{align}\label{eq:graph_speed}
&|\partial_t u^+(x,t) |\geq 2, && |\partial_t  u^- (x,t)|\geq 2,
\end{align}
where $u^\pm(x,t)$ are the maximum and minimum height functions of $\Sigma_t$ from Definition \ref{def-32}.

\bigskip

Next, due to the $C^{1,1}$ boundary assumption, there is some constant $\e>0$ such that at each point $p\in \partial \Omega$, we have $B_{6\e}(p+6\e n_p)\subset \Omega$, where $n_p$ denotes the unit inward pointing normal vector to $\partial\Omega$ at $p$. Namely, given $p \in \partial \Omega$,  $\Omega$ has an inscribed ball of radius $6\e$ tangent at $p$.

 We  denote the $0$-level set of $\hat \Sigma_t$ by  $\tilde \Omega_t:=\{x\in \mathbb{R}^n:(x,0)\in \hat \Sigma_t \}$. Since $\tilde \Omega_t$ increases to $\Omega$ as $t\to -\infty$, given $\delta\in (0,\e]$ there is some $t_\delta \ll T$ such that
\be\label{eq-omegat0}\sup_{x\in \partial\Omega} d(x,\tilde \Omega_{t_\delta }) \le \delta . \ee
 
\bigskip

Now, we claim that given $p\in \partial\Omega$ and $\delta\in (0,\e]$, $ \Sigma_t$ encloses $\Sigma^e_{t -t_{\e}}+ (p_\delta ,0)$ for $t\leq t_{\e}$, where $p_\delta:=p+(4\e +\delta)n_p$ and $\Sigma^e_t$ is the ancient entasis defined in Lemma \ref{lem-innerbarrier}. To prove the claim, we will apply the comparison principle by showing that the Dirichlet and initial boundaries of $\Sigma^e_{t -t_{\e}}+ (p_\delta ,0)$ are enclosed by $\Sigma_t$.

\bigskip

We first consider the Dirichlet condition. By \eqref{eq:entasis_boundary}, we have $$\partial \Sigma^e_{t -t_{\e}}+ (p_\delta ,0) \subset \overline{B_{3\e}(p_\delta)} \times \{\pm |t-t_{\e}|\}.$$ Therefore, it is enough to show 
\begin{equation}\label{eq:Lateral}
\overline{B_{3\e}(p_\delta)} \times [-|t-t_{\e}|,|t-t_{\e}|] \subset \hat \Sigma_t,
\end{equation}
for $t\leq t_{\e}$. Indeed, \eqref{eq-omegat0} implies $\overline{B_{3\e}(p_\delta)} \subset \tilde \Omega_{t_{\e}}$, and thus we have $u^+(x,t_{\e})\geq 0$ and $u^-(x,t_{\e})\leq 0$ for $|x-p_\delta|\leq 3\e$. Therefore, \eqref{eq:graph_speed} yields $u^+(x,t)\geq 2|t-t_{\e}|$ and $u^-(x,t_{\e})\leq -2|t-t_{\e}|$ in $\overline{B_{3\e}(p_\delta)}$ for $t\leq t_{\e}$. This gives us \eqref{eq:Lateral}.

\bigskip

To deal with the initial condition, we notice that 
\begin{equation*}
\Sigma^e_{t-t_{\e}}+ (p_\delta ,0)\subset B_{4\e}(p_\delta) \times [-|t-t_{\e}|,|t-t_{\e}|].
\end{equation*}
On the other hand, \eqref{eq-omegat0} yields $B_{4\e}(p_\delta) \subset \tilde \Omega_{t_{\delta}}$ so that we obtain $u^+(x,t_{\delta})\geq 0$ and $u^-(x,t_{\delta })\leq 0$ in $B_{4\e}(p_\delta)$. Hence,  \eqref{eq:graph_speed} again leads to $u^+(x,t)\geq 2|t-t_{\delta }|$ and $u^-(x,t_{\e})\leq -2|t-t_{\delta}|$ in $\overline{B_{3\e}(p_\delta)} \times (-\infty, t_{\delta}] $. Therefore, there is some $\bar  t \ll t_\e $ depending only on $t_\e,t_\delta$ such that
\begin{equation*}
B_{4\e}(p_\delta) \times [-|t-t_{\e}|,|t-t_{\e}|] \subset \hat \Sigma_t
\end{equation*}
holds for $t\leq \bar  t$.

\bigskip

Since $\Sigma^e_{t-t_{\e}}+ (p_\delta ,0)$ satisfies the boundary conditions as an inner barrier of $\Sigma_t$, the comparison principle guarantees
$\Sigma^e_{t-t_{\e}}+ (p_\delta ,0) \subset \hat \Sigma_t$ for $t\leq t_\e$. Passing $\delta$ to $0$ and interpreting the containment among zero level sets, we conclude 
\begin{equation*}
d(p, \tilde \Omega_t)\leq  d(p, \pi (\Sigma^e_{t-t_{\e}}) + p+4\e n_p )=4\e - r(0,t-t_\e)=  2\e e^{\frac{t-t_\e}{L}} 
\end{equation*}
where $\pi (\Sigma^e_{t-t_{\e}})=\{x : (x,0)\in \Sigma^e_{t-t_{\e}}\}$. This completes the proof, since  $\e$, $L=(2\e)^{-(n-1)}$, and $t_\e$ are independent on $p$.
\end{proof}

\bigskip

\begin{remark} The exponential decay in Proposition \ref{prop-723} is sharp in the sense that the non-compact translators satisfies similar bounds (as in the proof of Lemma \ref{lem-finitevol}) and the Grim Reaper in $\mathbb{R}^2$\[\sin y = e^t \cosh x\] has no better decay. \end{remark}

Remembering $0\in  \Omega$, Proposition \ref{prop-723} implies that there exists large positive $L$ such that  \be\label{eqn-dist}
\e_t    := \inf \{\e>0:\,  (1+\e)^{-\frac1n} \Omega \subset \Omega_t \} =O(e^{t/L}), \qquad \mbox{as} \,\, t\to-\infty.
\ee 
Instead of this exponential decay,	$\e_t = o(|t|^{-1})$ will be sufficient to conclude the proof of Theorem \ref{thm-compactunique}.

\begin{proof}[Proof of Theorem \ref{thm-compactunique}] We begin by choosing  a number $t_1 \ll -1$ such that the cross-section  $\tilde \Omega_{t_1}:=\{x\in \mathbb{R}^n:(x,0)\in \hat \Sigma_{t_1} \}$ contains the origin, and hence $\e_t$ defined in \eqref{eqn-dist} is a finite number for each $t \leq t_1$.  Recall our  notation $\Gamma_t = \p \hat \Gamma_t$ and $\Sigma_t = \p \hat \Sigma_t$,  where $\hat \Gamma_t$ is given in Proposition \ref{prop-existence}. As the key step, we claim
\be\label{eqn-mstep1}
\hat \Gamma_{t-t_1}\subset \hat \Sigma_{t}, \qquad  \text{ for all } t< t_1- 2\omega_n^{-1}V_\Omega .
\ee

Note first that, by Lemma \ref{lem-34} and the definition of $\e_t$, we have that 
\begin{equation}\label{eq:containment1}
\hat K_{t,\e_t} \subset \hat\Sigma_t
\end{equation}
for $t\le \tau_1$ where $\tau_1:= \min(t_1,\, \tau_0, \,\sup\{t\,:\, \e_t < \e_0 )\})$. Here, $\tau_0$ and $\e _0$ are defined in \eqref{eq-ineqd} and Lemma \ref{lem-32}, respectively. Furthermore since  $\p_t u^-(0,t) \ge \lambda $ and $\p_t u^+(0,t) \le -\lambda$ by the Harnack, we obtain
\begin{equation}\label{eq:containment2}
\hat K'_{t} :=  \{ (x',x'_{n+1}):\,   | x'_{n+1} |\le  -u_{\e_t}(x') -M \e_t +\lambda|t-t_1| \,\} \subset \hat K_{t,\e_t},
\end{equation} 
for all $t\le \tau_1$. Next, recalling  $\Gamma_{s,0}$ from the proof of Proposition \ref{prop-existence}, we have 
\begin{equation}\label{eq:containment3}
\hat K'_{t} \subset \hat \Gamma_{t-t_1,0}
\end{equation}

for $t\le \tau_1$ by Lemma \ref{lem-33}. Moreover, we have\begin{equation}\label{eq:volume.convergence}
\text{Vol}(\hat \Gamma_{t-t_1,0} )-\text{Vol}(\hat K'_t) \to 0, \qquad \mbox{ as}\,\,t  \to -\infty.
\end{equation}Indeed, points in $\hat \Gamma_{t-t_1,0}\setminus\hat K'_t$  belong to one of two cylinders either $\Omega\setminus \Omega_{\e _t} \times [-\lambda |t-t_1|,\lambda |t-t_1|]$ or $\Omega_{\e _t}\times [-\lambda |t-t_1|,\lambda |t-t_1|]$. As a consequence of \eqref{eqn-dist} (which is a consequence of Proposition \ref{prop-723}), the volume of the former cylinder converges to zero. Moreover, the volume of $\hat \Gamma_{t-t_1,0}\setminus\hat K'_t$ inside of the later cylinder converges to zero since $\lim_{t\to -\infty} V_{\e_t}=V_\Omega$ by Lemma \ref{lem-32}. Now \eqref{eq:volume.convergence} follows by combining these two.

\bigskip

On the other hand, we denote the GCFs running from $K'_{t,0}=\p \hat K'_{t}$ and $\Gamma_{s,0}=\partial \hat \Gamma_{s,0}$  by $K'_{t,s}=\p \hat K'_{t,s}$ and $\Gamma_{s,t}=\partial \hat \Gamma_{s,t}$, respectively. Since \eqref{eq:containment1} and \eqref{eq:containment2} gave us $\hat K'_{t} \subset \hat K_{t,\e_t} \subset \hat \Sigma_t$,  the comparison principle implies \[\hat K'_{t,t_2-t}\subset \hat \Sigma_{t_2}, \qquad \text{for all }t_2\ge t.\]
We remember \eqref{eq:containment3} and apply \eqref{eq:volume.convergence} to obtain
\begin{equation*}
\text{Vol}(\hat \Gamma_{t-t_1,t_2-t} )-\text{Vol}(\hat K'_{t,t_2-t})= \text{Vol}(\hat \Gamma_{t-t_1,0} )-\text{Vol}(\hat K'_t)\to 0
\end{equation*}
as $t\to -\infty$. In addition, we have $\Gamma_{t-t_1,t_2-t} \downarrow \Gamma_{t_2-t_1}$ as $t\to-\infty$ by  \eqref{eq:Gamma_conv}. 
Therefore, we conclude that 

\begin{equation}\label{eq:conclusion}
\hat \Gamma_{t_2-t_1}\subset \hat \Sigma_{t_2}
\end{equation}
for all  $t_2< t_1-\frac{2V_\Omega}{\omega_n}$,  which proves our claim \eqref{eqn-mstep1}.

\bigskip
 We will now use the inclusion \eqref{eqn-mstep1} to conclude the proof of our uniqueness theorem. First, since $\hat \Gamma_{t-t_1} \subset \hat \Sigma_t $ and  $\p_t \text{Vol}(\hat \Sigma_t) = \p_t \text{Vol}(\hat \Gamma_{t-t_1})= - \omega_n$, we obtain  \be\label{eq-constantintime}\text{Vol}(\hat \Sigma_t \setminus \hat \Gamma_{t-t_1})\text{ is constant in time.}\ee We recall the definitions of heights $h^\pm$ of $\Sigma_t$ given in  Definition \ref{defn-hpm}. Since $\hat \Gamma_{t-t_1} \subset \hat \Sigma_t$, $\p_t h^+(t) \le -\lambda $  and $\p_t h^-(t)\ge  \lambda$, we obtain that, as $t\to -\infty$, $h^+(t)-\lambda |t|$ increases and $h^-(t)+\lambda|t|$ decreases.  In addition,  by Theorem \ref{thm-asymptotic}, Proposition \ref{prop-23}, and \eqref{eq-constantintime} we have 
\begin{align}\label{eq-h+h-}
&\lim_{t\to -\infty} h^+(t)-\lambda|t| =C^+, &&  \lim_{t\to -\infty}  h^-(t)+\lambda|t|=C^-,
\end{align}
for some constants $C^+$ and $C^-$.  Next, recall $\hat \Gamma_{t-t_1} \subset \hat \Gamma_{t-t_1,0}$ by \eqref{eq:Gamma_conv} and $\mathrm{Vol}(\hat \Gamma_{t-t_1,0}\setminus \hat \Gamma _{t-t_1})$ converges to $0$ as $t\to -\infty$ by \eqref{eq:volume0} and Proposition \ref{prop-existence} (i). Thus $\text{Vol}(\hat \Sigma_t \setminus \hat \Gamma_{t-t_1})=\text{Vol}(\hat \Sigma_t \setminus \hat \Gamma_{t-t_1,0})+o(1)$ as $t\to-\infty$. Observe  \begin{equation*}\ba 
\text{Vol}(\hat \Sigma_t \setminus \hat \Gamma_{t-t_1,0})= \int  [(u_{\Omega}(x)-\lambda |t-t_1|)-u^-(x,t)]_+ dx\\ +\int [u^+(x,t)+(u_{\Omega}(x)-\lambda |t-t_1|)]_+dx \ea \end{equation*}
where we interpret $u^-=\infty$ and $u^+=-\infty$ outside of their domain.  By the dominated convergence theorem, the two integrals above converge to $(-C^--\lambda t_1)|\Omega|$ and $(C^+-\lambda t_1)|\Omega|$, respectively, as $t \to -\infty$. Indeed, by \eqref{eq-h+h-}, the first integrand is bounded by $u_{\Omega}(x) -C^- -\lambda t_1 \in L^1(\Omega)$ by Lemma \ref{lem-finitevol} and the integrand converges locally uniformly to the constant $-C^- -\lambda t_1$ by Theorem \ref{thm-asymptotic}. The same argument works for the second integral. This proves \begin{equation*}\text{Vol}(\hat \Sigma_t \setminus \hat \Gamma_{t-t_1,0})=
(C^+-C^--2\lambda t_1)|\Omega|+o(1),
\end{equation*} and we conclude $\text{Vol}(\hat \Sigma_t \setminus \hat \Gamma_{t-t_1})=(C^+-C^--2\lambda t_1)|\Omega|$ as this is constant. Since both of $\Sigma_t$ and $\Gamma_t$ become extinct at $T=-\frac{2V_\Omega}{\omega_n}$, we have
\begin{equation*}
(C^+-C^--2\lambda t_1)|\Omega|=\text{Vol}(\hat \Sigma_{T+t_1})=-\omega_nt_1,
\end{equation*}
namely $C^+=C^-=:C$.

 \smallskip 
It remains to  show that $\Sigma_t=\Gamma_t + C \, e_{n+1}$ and the proof follows from  what we have already done.We may summarize the first part of the proof as follows:  if there exist constants $t_0$, $\tau_1$,  and a decreasing function $\e_t =(|t|^{-1})$  as $t\to -\infty$, such that  $\hat\Sigma_t$ contains
 \[\hat K_{t,\e_t,t_0}:= \{ (x,x_{n+1}):\, | x_{n+1}| \le -u_{\e_t}(x) -M\e_t +\lambda |t-t_0| \,\}\]                  
 for all $t<\tau_1$, then $\hat \Gamma_{t-t_0} \subset \hat \Sigma_t$. 
By looking at the intersection between $\Sigma_t$ and  $\{x_{n+1}=C\}$, we may define $\e_t'$ in the same way as $\e_t$ is defined in \eqref{eqn-dist},
that is  \[\e'_t    := \inf \{\e>0:\,  (1+\e)^{-\frac1n} \Omega \subset \Omega'_t \} \]
where $\Omega'_t$ is the cross-section of $\hat \Sigma_t$ by $\{ x_{n+1} = C \}$. Similarly to  Proposition \ref{prop-723}, we have $\e'_t = o(|t|^{-1})$. Next, for each small $\delta>0$,  Lemma \ref{lem-34} and  \eqref{eq-h+h-} imply that  there is $\tau_\delta \ll -1$ such that $\hat K_{t,\e'_t, -\delta}+C\e_{n+1} \subset \hat \Sigma_t$,  for all $t< \tau_\delta$.
Then, as we obtained \eqref{eq:conclusion}, we can derive $\hat \Gamma_{t+\delta}+C e_{n+1} \subset \hat \Sigma_t$ by repeating the argument from \eqref{eq:containment2} to \eqref{eq:conclusion} after replacing $t_1$ by $-\delta$.  Taking $\delta\to 0$, we get $\hat \Gamma_t+ Ce_{n+1}\subset \hat \Sigma_t$. Since $\text{Vol}(\hat \Gamma_t)=\text{Vol}(\hat \Sigma_t)= -\omega_n t -2V_\Omega$, we finally conclude that $\Gamma_t +C\, e_{n+1} =  \Sigma_t$. 

\end{proof}

%------------------------------------------------------------------------------%

\centerline{\bf Acknowledgements}

\smallskip 

\noindent B. Choi has been partially supported by NSF grant DMS-1600658 and the National Research Foundation of Korea grant NRF-2022R1C1C1013511. B. Choi also thanks Columbia University and University of Toronto where the research was initiated and then developed.

\noindent K. Choi has been partially supported by KIAS Individual Grant MG078901 and TJ Park Science Fellowship.

\noindent P. Daskalopoulos has been partially supported by NSF grant DMS-1600658 and DMS-1900702.

\bigskip
\bigskip

\end{document}